\title{{Existence and Asymptotic Behavior of Solutions to a Semilinear Hyperbolic-Parabolic Model of Chemotaxis  }}
\date{}
\author{Cristiana Di Russo$^{1}$}
\newcommand{\bb}{\frac{1}{\beta}}
\newcommand{\R}{{\mathbb R}}
\newtheorem{theorem}{Theorem}[section]
\newtheorem{lemma}[theorem]{Lemma}
\newtheorem{defn}[theorem]{Definition}
\newtheorem{proposition}[theorem]{Proposition}
\newtheorem{rmk}[theorem]{Remark}
\begin{document}

\setcounter{page}{1}

\maketitle
\vspace{-0.8cm}
\begin{center} 
{ $^1$ Laboratoire MAPMO UMR CNRS 7349, Universit\'e d'Orl\'eans, F\'ed\'eration Denis Poisson,\\ UFR Sciences, B\^atiment de math\'ematiques,
B.P. 6759 -F-45067 Orl\'eans cedex 2,
France \vspace{0.1cm}}
\end{center}
\vspace{0.3cm}
\begin{abstract}
We consider a general hyperbolic-parabolic model of chemotaxis in the multidimensional case. For this system we show the global existence of smooth solutions to the Cauchy problem and we determine their asymptotic behavior.  Since this model does not enter in the classical framework of dissipative problems, we analyze it combining the features of the hyperbolic and the parabolic parts and using detailed decay estimates of the Green function. 
\end{abstract}

\begin{small} \textbf{Keywords} : Chemotaxis, hyperbolic-parabolic systems, dissipativity, asymptotic behavior.

\textbf{AMS subject classifications }: 35L60, 35L45, 35B40, 92B05, 92C17.
\end{small}

\pagestyle{myheadings}
\thispagestyle{plain}
\markboth{C. DI RUSSO}{A Hyperbolic-Parabolic Model of Chemotaxis}

\section{Introduction}
Chemotaxis, the movement of cells in response to a chemical substance, is decisive in many biological processes and determines how cells arrange and organize themselves. For example, the formation of cells aggregations (amoebae, bacteria, etc)
occurs during the response of the species populations to the change in the environment of the chemical concentrations.
In multicellular organisms instead, chemotaxis of cell populations plays a crucial role throughout
the life cycle: during embryonic development it is involved in organizing
cell positioning, e.g. during gastrulation  and patterning of the
nervous system; in the adult life, it directs immune cell migration to sites of inflammation and fibroblasts into wounded regions to initiate healing. These same
mechanisms are used during cancer growth, allowing tumor cells to invade the
surrounding environment or stimulating new blood vessel growth  \cite{Mu2}.\\
The movement of bacteria under the effect of chemotaxis has been a widely studied topic in Mathematics in the last decades, and numerous models have been proposed. Moreover it is possible to describe this biological phenomenon at different scales. For example, by considering the population density as a whole, it is possible to obtain macroscopic models of partial differential equations. One of the most celebrated model of this class is the one proposed by Patlak in 1953 \cite{Pa} and subsequently by Keller and Segel in 1970 \cite{KeSe3}.

In the Patlak-Keller-Segel (PKS) system, the evolution of density of bacteria is
described by a parabolic equation, and the density of chemoattractant is generally driven by a
parabolic or an elliptic equation.
The behavior of this reaction-advection-diffusion system is now quite well-known: in the
one-dimensional case, the solution is always global in time. In several space dimensions, in the parabolic elliptic case, if
initial data are small enough in some norms, the solution will be global in time and rapidly decaying in time; while on the
opposite, it will explode in finite time at least for some large initial data.

The simplicity, the analytical tractability, and the capacity to replicate some of the
key behaviors of chemotactic populations are the main reasons of the success of this model of chemotaxis. In particular, the ability to display
auto-aggregation, has led to its prominence as a mechanism for self-organization of
biological systems.\\
Moreover, there exists a lot of variations of PKS model to describe  biological processes in which chemotaxis is involved. They differ in the functional forms of the three main mechanisms involved: the sensing of the chemoattactant, which has an effect on the oriented movement of the species,
the production of the chemoattractant by a mobile species or an external source, and the degradation of the chemoattractant by a mobile species or an external effect.\\
However, the approach of PKS model is not always sufficiently precise to describe the
biological phenomena \cite{FiLaPe}. As a matter of fact, the diffusion can lead to fast dissipation or explosive behaviors
and prevents us to observe intermediate organized structures. Moreover it is not able to reproduce
the ``run and tumble'' behavior, the movement along straight lines, the sudden stop and the change of direction, typical of bacteria like E.Coli.

The main reason is that this approach describe processes on a long time scale, while for short time range one gets better a description from models with finite characteristic speed.\\
Kinetic transport equations describe quite well the movement of a single organism. For example the ``run and tumble''  can be described by the velocity-jump process \cite{HiOt,Ste}.

At an intermediate scale between diffusion and kinetic models we can find hyperbolic models.
This class of models can be derived as a fluid limit of transport equations but with a different
scaling, namely the hydrodynamic scaling $t\rightarrow \epsilon t$, $x\rightarrow \epsilon x$ \cite{ChaMarPeSc}.\\
Starting from a transport equation for the chemosensitive movements, in \cite{Hi5} Hillen shows a kinetic derivation of hyperbolic models by the moment closure method, thus obtaining the Cattaneo model for chemosensitive movement.
Using the first two moments he obtains the following hyperbolic-parabolic model:
\begin{equation}
\label{sistema_I}
\left\{
\begin{array}{l}
\partial_{t} u +\nabla \cdot v = 0, \\\\
\partial_{t} v + \gamma^2\nabla u= -v+h(\phi,\nabla \phi)g(u),\\\\
\partial_{t} \phi =\Delta \phi +a u-b\phi.
\end{array}
\right.
\end{equation}
where $x\in \mathbb{R}^n$, $t\geq0 $, $u$ is the population density, $v$ are the fluxes, $\phi$ is the concentration of chemical species, and the source terms $g,h$ are smooth functions.

We start our analytical study by considering the semilinear hyperbolic-parabolic system 
\begin{equation}
\label{sistema_GUMA}
\left\{
\begin{array}{l}
\partial_{t} u +\nabla \cdot v = 0, \\\\
\partial_{t} v + \gamma^2\nabla u= -b(\phi,\nabla\phi)v+h(\phi,\nabla \phi)g(u),\\\\
\partial_{t} \phi =\Delta \phi +f(u,\phi),
\end{array}
\right.
\end{equation}
which generalize the one proposed by Dolak and Hillen in  \cite{DoHi}. The parameter $\gamma$ is the characteristic speed of propagation of the cells and the source terms $b,h,g$, and $f$ are smooth functions.

The coupling of hyperbolic and parabolic equations has been widely studied by Kawashima and Shizuta \cite{Ka,KaShi2,ShiKa}. Under the smallness assumption on the initial data and the dissipation condition on the linearized system, they were able to prove global (in time) existence and asymptotic stability of smooth solutions to the initial value problem for a general class of symmetric hyperbolic-parabolic systems.\\
System \eqref{sistema_I} does not enter in this framework. As a matter of fact, due to the presence of the source term $a u$, the dissipative condition fails.\\
With reference to the one dimensional case, a first result of local and global existence for weak solutions, under the assumption of turning rate's boundness, was proved in \cite{HiSte}. Subsequently Guarguaglini et al. in \cite{GuMaNaRi} have proved more general results for this model under weaker hypotheses, by showing a general result of global stability
of zero constant states for the Cauchy problem and of general constant state for the
Neumann problem. These results have been obtained using the linearized
operators, and the accurate analysis of their nonlinear perturbations.

In order to obtain our global existence result for the multidimensional case we follow this approach. The basic idea is to consider the hyperbolic and parabolic equation ``separately'', and to take advantage of their respective properties. 

Thanks to the Green function of the heat equation $\Gamma^p$, and the Duhamel's formula, we know that, the solution to the parabolic equation is:
$$
\phi(x,t)=(e^{-bt}\Gamma^p(t)\ast \phi_0)(x)+\int_0^t e^{-b(t-s)}\Gamma^p(t-s)\ast  (a u(s))ds.
$$
On the other hand for the damped wave equation,
\begin{equation}
\label{sistema_II}
\left\{
\begin{array}{l}
\partial_{t} u +\nabla \cdot v = 0, \\\\
\partial_{t} v + \gamma^2\nabla u= -v,
\end{array}
\right.
\end{equation}
we have, by the theory of dissipative systems \cite{SiThoWa}, that the presence of the dissipative term $-v$ enforces a faster decay of the solution.
This implies that we can write the solution of the hyperbolic part of system \eqref{sistema_I}, as
$$
w(x,t)=(\Gamma^h(t)\ast w_0)(x)+\int_0^t \Gamma^h(t-s)\ast H(\phi,\nabla\phi,w)(s)ds,
$$
where $w=(u,v)$, $H(\phi,\nabla\phi,w)=[0,h(\phi,\nabla\phi)g(u)]^t$ and $\Gamma^h$ is the Green function of the damped wave equation \eqref{sistema_II}.

Our strategy has been to use the decomposition of the Green function of dissipative hyperbolic systems done by Bianchini at al. \cite{BiHaNa} and its precise
decay rates.\\ Indeed  in \cite{BiHaNa} the authors proposed a detailed description of the multidimensional Green function for a class of partially dissipative systems.
They analyzed the behavior of the Green function for the linearized problem, decomposing it into two main terms. The first
term is the diffusive one, and consists of heat kernel, while the faster term consists of the hyperbolic part. Moreover they gave a more precise description of the behavior of the diffusive part,
which is decomposed into four blocks, which decay with different decay rates. They showed that solutions have canonical projections
on two different components: the conservative part and the dissipative part. The
first one, which formally corresponds to the conservative part of the equations, decays in time like the heat kernel, since it corresponds to the diffusive
part of the Green function. On the other side, the dissipative part is strongly influenced
by the dissipation and decays at a rate $t^{-\frac{1}{2}}$ faster than the conservative one.\\
By these refined estimates we were able to prove global existence of smooth solutions for small initial data, and to determine at the same time their asymptotic behavior.

We are able to show decay rates of the $L^\infty$-norm of solution of order $O(t^{-\frac{n}{2}})$, faster than the one obtained in \cite{GuMaNaRi} which was $O(t^{-\frac{n}{4}})$. \\
Moreover we show the global existence, and we determinate the asymptotic behavior of solutions, also for perturbation of non-zero constant stationary states in the case of simpler source terms. In order to prove this result, we need to adapt the decay estimates of the Green function to compensate the lack of polynomial decay of linear term in the hyperbolic equations.

The parabolic and hyperbolic models of chemotaxis are expected to have the same behavior for long time. We investigate this aspect analytically and
 we show that the difference between the solution of PKS model and the hyperbolic one decays with a rate of $O(t^{-\frac{n}{2}})$ in $L^2$, so $t^{-\frac{n}{4}}$ faster than the decay of solutions themselves if $n\leq3$, otherwise we get a decay $t^{-\frac{1}{2}}$ faster than the decay of solutions.\\

The article is organized as follows: in the first section, we review some properties of partially dissipative hyperbolic systems, we recall the results obtained by Bianchini et al. in \cite{BiHaNa} about the asymptotic behavior of their smooth solutions, and the local existence in time for smooth solutions to system \eqref{sistema_GUMA} to the Cauchy problem.
Subsequently, in Section 3, we are able to prove the  global existence result thanks to the refined decay estimates of the Green Kernel of hyperbolic equations. \\
In Section 4 we study the case of perturbation of non-zero constant stationary state.
For large time hyperbolic and parabolic model are expected to have the same behavior. Then,
in the last section,

we examine the difference between solutions to the hyperbolic-parabolic system \eqref{sistema_GUMA} and to the related PKS model, showing that this difference decays with a faster rate.

\section{Background}
\subsection{Partially Dissipative Hyperbolic Systems}\label{PrimaSezione}
In this first section we recall some properties of hyperbolic dissipative systems.\\
Let us focus our attention on the following multidimensional system of balance laws
\begin{equation}
\label{sistema_Hyp}
\left\{
\begin{array}{l}
\partial_{t} u +\nabla \cdot v = 0, \\\\
\partial_{t} v + \gamma^2\nabla u= -\beta v,
\end{array}
\right.
\end{equation}
where $u: \mathbb{R}^n\times \mathbb{R}^+ \rightarrow \mathbb{R}^+$, $v=(v_1,v_2,\ldots, v_n):\mathbb{R}^n\times \mathbb{R}^+ \rightarrow \mathbb{R}^n$,
with initial conditions 
$$
u(x,0)=u_0(x), \quad v(x,0)=v_0(x).
$$
We can observe that since \eqref{sistema_Hyp} is equivalent to damped wave equation, the behavior of the solutions to the Cauchy problem for this system  is quite well known \cite{Ka}.
Moreover system \eqref{sistema_Hyp} belongs to the class of dissipative hyperbolic systems.

It is possible to rewrite system \eqref{sistema_Hyp} in a compact form as
\begin{equation}\label{dissi_compa}
\partial_t w+\sum_{j=1}^n A_j\partial_{x_j}w= g(w),
\end{equation}
where  $w=(u,v)\in \Omega \subseteq \mathbb{R}\times \mathbb{R}^n$, and 
\begin{equation*}
A_j=\left(\begin{array}{lr}
0 & e_j\\
 \gamma^2 e_j^t & 0  
\end{array}\right),
\end{equation*}
with $(A_j)_{11} \in \mathbb{R}^{1\times 1}$, $(A_j)_{12} \in \mathbb{R}^{n\times 1}$, $(A_j)_{21} \in \mathbb{R}^{1\times n}$, and $(A_j)_{22} \in \mathbb{R}^{n\times n}$ and $e_j$ is the canonical $j-$th vector of $\mathbb{R}^n$. Here we denote the source term by 

\begin{equation*}
\quad g(w)=
\left(
\begin{array}{c}
0\\
q(w)
\end{array}
\right)=\left(
\begin{array}{c}
0\\
-\beta v
\end{array}
\right), \quad \textrm{ with } q(w)\in \mathbb{R}^n.
\end{equation*}

The initial condition reads
\begin{equation}\label{dissi_ini}
w(x,0)=w_0(x).
\end{equation}

By the introduction of new variables $W=(W_1,W_2)$, with
$$
W_1=u, \quad W_2=\frac{v}{\gamma^2},
$$ 
and a symmetric positive definite matrix $A_0$, defined as 
\begin{equation}\label{matrici}
A_0=\left(\begin{array}{cc}
 I & 0\\
 0 & \gamma^2 I 
\end{array}\right),
\end{equation}
it is possible to symmetrize system \eqref{dissi_compa}.
Selecting  $W$ as new variable, our system reads 
$$
A_0(W)\partial_t W + \sum_{j=1}^n \bar{A}_j\partial_{x_j} W = G(\Phi(W)).
$$
where 
\begin{equation*}\label{matrici}
\bar{A}_j:=A_j A_0(W)=\left(\begin{array}{cc}
0 & \gamma^2 e_j\\
\gamma^2 e_j^t & 0  
\end{array}\right),
\end{equation*}
and $G(W)=g(\Phi(W))=(0, Q(W))^t$.
Let us notice that,  for every $j=1,\ldots, n$, the matrix $\bar{A}_j$ is symmetric.

In order to continue the analysis of smooth solutions for dissipative hyperbolic system let us introduce the condition of Shizuta and Kawashima (SK) \cite{ShiKa} for hyperbolic systems. 
\begin{defn}\label{SK}
System \eqref{dissi_compa} verifies condition (SK), if every eigenvector of $\sum\limits_{j=1}^n A_j\xi_j$ is not in the null space of $Dg(0)$ for every $\xi\in \mathbb{R}^{n +1}\backslash\{0\}$.
\end{defn}

We can observe that system \eqref{dissi_compa} verifies the Kawashima condition since, given an equilibrium state 
\[
\left( 
\begin{array}{c}
u\\
0
\end{array}
\right) \in \mathbb{R}^{n+1}, 
\]
then the generic vector
\[
\left( 
\begin{array}{c}
X\\
0
\end{array}
\right) \in \mathbb{R}^{n+1}, 
\]
is eigenvector of $\sum\limits_{j=1}^nA_j\xi_j$, if and only if $X=0$.

With reference to the existence of smooth solutions to system \eqref{dissi_compa}, we recall the following result, which is a special case of the results in \cite{BiHaNa}. 
\begin{theorem}\label{existence}
Let us consider the Cauchy problem \eqref{dissi_compa}-\eqref{dissi_ini}. Let $s \geq 0$. For every $w_0\in H^s(\R^n)$, there is a unique global solution $w$ to \eqref{dissi_compa}-\eqref{dissi_ini} which verifies
$$
w\in C^0([0,\infty);H^s(\mathbb{R}^{n}))\cap C^1([0,\infty);H^{s-1}({\mathbb{R}^{n}})), 
$$
and such that,
$$
\sup_{0\leq t<+\infty} \|w(t)\|_{H^s}^2+\int_0^{+\infty}\|v(\tau)\|_{H^s}^2 d\tau \leq C\|w_0\|_{H^s}^2,
$$
where $C$ is a positive constant.
\end{theorem}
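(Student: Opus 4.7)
Because \eqref{sistema_Hyp} is \emph{linear} with constant coefficients and linear dissipation $-\beta v$, the strategy is the classical one: produce an a priori $H^s$ energy estimate whose right-hand side is independent of $T$ and then combine it with local-in-time well-posedness to obtain a global solution.

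First I would invoke local existence. In the symmetric form with symmetrizer $A_0$ and symmetric matrices $\bar{A}_j$ constructed just before the theorem, Kato's theorem for symmetric hyperbolic systems provides, for every $w_0\in H^s(\R^n)$ with $s\ge 0$, a unique solution $w\in C^0([0,T);H^s)\cap C^1([0,T);H^{s-1})$ on some maximal interval $[0,T)$, with $T$ characterized by the blow-up of $\|w(t)\|_{H^s}$ as $t\to T$.

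Then I would derive the global energy estimate directly on the physical variables. Multiplying the first equation of \eqref{sistema_Hyp} by $\gamma^2 u$, the second by $v$, adding and integrating over $\R^n$, the cross terms combine into $\gamma^2\nabla\cdot(uv)$ and disappear, leaving
\[
\tfrac{1}{2}\tfrac{d}{dt}\int_{\R^n}\bigl(\gamma^2 u^2+|v|^2\bigr)\,dx \;=\; -\beta\int_{\R^n}|v|^2\,dx.
\]
To upgrade this to $H^s$ I would apply the Bessel multiplier $\Lambda^s:=(I-\Delta)^{s/2}$ to the system (or $\partial_x^\alpha$ for integer $s$); since $\Lambda^s$ commutes with the constant-coefficient operators $A_j\partial_{x_j}$ and with $-\beta v$, the same computation applied to $(\Lambda^s u,\Lambda^s v)$ yields
\[
\tfrac{1}{2}\tfrac{d}{dt}\bigl(\gamma^2\|u(t)\|_{H^s}^2+\|v(t)\|_{H^s}^2\bigr) \;=\; -\beta\|v(t)\|_{H^s}^2.
\]
Integrating in $t$ and absorbing $\gamma^2$, $\beta$ into a single constant $C$ yields precisely the claimed bound.

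Finally, this bound forbids finite-time blow-up of $\|w(t)\|_{H^s}$, so the maximal time of existence must be $T=+\infty$. Uniqueness follows from the same identity applied to the difference of two solutions, and the $C^1$ regularity in time is read off from the equation itself, since $\partial_t w=-\sum_j A_j\partial_{x_j}w+g(w)\in C^0([0,\infty);H^{s-1})$. Honestly there is no serious obstacle here, because linearity sidesteps the usual nonlinear commutator/Moser estimates; the one point that deserves a line of care is the non-integer $s$ case, which is handled by working with the fractional multiplier $\Lambda^s$ rather than with integer derivatives, as indicated above.
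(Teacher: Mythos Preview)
Your argument is correct. The paper, however, does not give its own proof of this theorem: it is stated as a background result and simply attributed as ``a special case of the results in \cite{BiHaNa}''. So there is no paper proof to compare against; your direct energy method is exactly the natural way to justify the statement for this particular linear constant-coefficient system, and is in fact more elementary than invoking the general machinery of \cite{BiHaNa}, which is designed for nonlinear partially dissipative systems satisfying (SK). The only minor remark is that for $s=0$ the $C^1([0,\infty);H^{-1})$ regularity should be read via duality, but this is standard and does not affect your argument.
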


The refined estimates of the Green Kernel of system \eqref{dissi_compa} proposed by Bianchini et al. \cite{BiHaNa}, holds for linearized dissipative system in the Conservative-Dissipative form.
Then, we rewrite system \eqref{dissi_compa} in this particular form, which will be useful in our study.\\
Let us consider a linear  system with constant coefficients
\begin{equation}\label{cons2}
w_t+\sum_{j=1}^n A_j w_{x_j}=Bw,
\end{equation}
where $w=(w_1,w_2)\in \mathbb{R}\times \mathbb{R}^{n}$. 

\begin{defn}\label{Def_ConsDiss}
System \eqref{cons2} is in Conservative-Dissipative form (C-D form) if it is symmetric, i.e. $A^t_{j}=A_j$ for all $j=1,\ldots,n$, and 
there exists a negative definite matrix $D\in \mathbb{R}^{n\times n}$, such that
$$
B=\left(
\begin{array}{cc}
 0 &  0\\
0 & D
\end{array}
\right).
$$
\end{defn}
In this case $w_1$ is called the conservative variable, while $w_2$ is the dissipative one.\\
Under suitable assumptions every symmetrizable dissipative system can be rewritten in the C-D form.
Let us observe that system \eqref{dissi_compa} can be easily written in the Conservative-Dissipative form by a change of variable.

Set 
$$
M=\left(
\begin{array}{cc}
I & 0 \\
0 & \gamma^{-1} 
\end{array}
\right),
$$
and define the matrices of the C-D form
$$
\tilde{A}_j=\left(
\begin{array}{cc}
0 &\gamma e_j\\ \
\gamma e_j^t &0
\end{array}
\right), \quad \tilde{B}= \left(
\begin{array}{cc}
0 & 0 \\
0 & -\beta I
\end{array}
\right). 
$$ 
Setting
$$
\left(
\begin{array}{c}
\tilde{w}_1\\
\tilde{w}_2
\end{array}
\right)=M\left(
\begin{array}{c}
u\\
v
\end{array}
\right)= \left(
\begin{array}{c}
u\\
\frac{v}{\gamma}
\end{array}
\right)
$$
and reporting in \eqref{sistema_Hyp}, we obtain the conservative-dissipative form for this system
\begin{equation*}
\label{C-D}
\left\{
\begin{array}{l}
\partial_{t} \tilde{w}_1 +\gamma \nabla \cdot  \tilde{w}_2 = 0, \\\\
\partial_{t} \tilde{w}_2 + \gamma \nabla \tilde{w}_1= -\beta \tilde{w}_2.
\end{array}
\right.
\end{equation*}
We will consider by now the Conservative-Dissipative form of system \eqref{sistema_Hyp} written as:
\begin{equation}
\label{C-D_uv}
\left\{
\begin{array}{l}
\partial_{t} u +\gamma \nabla \cdot  v = 0, \\\\
\partial_{t} v + \gamma \nabla u= -\beta v.
\end{array}
\right.
\end{equation}

\subsubsection{The Multidimensional Green Function}\label{MultiGreen}
We present now the results on the study of the Green Kernel $\Gamma^h(x,t)$ of multidimensional dissipative hyperbolic systems done by Bianchini et al. in \cite{BiHaNa}. In their work the authors analyzed the behavior of the Green function for linearized problems, which has been decomposed into two main terms. The first term, the diffusive one, consists of heat kernels, while the faster term consists of the hyperbolic part.\\
In general, the form of the Green function is not explicit, but it is possible to deal with its Fourier transform. The separation of the Green kernel into various parts is done at the level of a solution operator $\Gamma(t)$ acting on $L^1\cap L^2(\mathbb{R}^n,\mathbb{R}^{n+1})$.

They proved the following theorem, \cite{BiHaNa}:
\begin{theorem}\label{Theo_BHN}
Consider the linear PDE in the conservative-dissipative form
\begin{equation*}\label{theo_CD}
\partial_t w+\sum_{j=1}^n A_j \partial_{x_j} w= Bw,
\end{equation*}
where $A_j$, $B$ satisfy the assumption (SK), and let $Q_0=R_0L_0$, $Q_{-}=I-Q_0=R_{-}L_{-}$ be the eigenprojectors on the null 
space and the negative definite part of $B$ with  $L_0=R_0^T=[I_{n_1} \; 0 ]$ and $L_-=R_-^T=[0 \; I_{n_2}]$.

Then, for any function $w_0\in   L^1 \cap L^2 (\mathbb{R}^n,\mathbb{R}^{n+1})$  the solution of the linear dissipative system can be decomposed as
$$
w(t)=\Gamma^h(t)\ast w_0=K(t)w_0+\mathcal{K}(t) w_0,
$$  
where  for any multi index $\beta$ and for every $p\in [2,+\infty]$, the following estimates hold.\\
$K(t)$ estimates:
\begin{eqnarray*}\label{k_estimates}
\|L_0 D^\beta K(t) w_0\|_{L^p}&\leq& C(|\beta|)\min\{1, t^{-\frac{n}{2}(1-\frac{1}{p})-\frac{|\beta|}{2}}\}\|L_0 w_0\|_{L^1}\\
&+&C(|\beta|)\min\{1, t^{-\frac{n}{2}(1-\frac{1}{p})-\frac{1}{2}-\frac{|\beta|}{2}}\}\|L_{-} w_0\|_{L^1},\nonumber\\\nonumber\\
\|L_{-} D^\beta K(t) w_0\|_{L^p}&\leq& C(|\beta|)\min\{1, t^{-\frac{n}{2}(1-\frac{1}{p})-\frac{1}{2}-\frac{|\beta|}{2}}\}\|L_0 w_0\|_{L^1}\\
&+&C(|\beta|)\min\{1, t^{-\frac{n}{2}(1-\frac{1}{p})-1-\frac{|\beta|}{2}}\}\|L_{-} w_0\|_{L^1}\nonumber.
\end{eqnarray*}
$\mathcal{K}(t)$ estimates:
\begin{equation*}\label{K_estimates}
\|D^{\beta}\mathcal{K}(t) w_0\|_{L^2}\leq C e^{-ct}\|D^{\beta}w_0\|_{L^2}.
\end{equation*}
\end{theorem}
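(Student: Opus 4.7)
The plan is to work entirely in Fourier space, where the solution operator has the explicit form $\hat{\Gamma}^h(t,\xi)=\exp\bigl(t\,M(\xi)\bigr)$ with $M(\xi):=-iE(\xi)+B$ and $E(\xi)=\sum_{j=1}^n A_j\xi_j$. I would define the decomposition $\Gamma^h=K+\mathcal{K}$ by a smooth frequency cut-off: let $K(t)$ be the inverse Fourier transform of $\chi(\xi)e^{tM(\xi)}$ for a cut-off $\chi$ supported in $\{|\xi|\le\delta\}$, and let $\mathcal{K}(t)$ collect the complementary medium- and high-frequency contribution, where a uniform spectral gap will produce exponential decay.

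For the low-frequency piece I would diagonalize $M(\xi)$ by standard analytic perturbation theory around $\xi=0$. At $\xi=0$, $M(0)=B$ has the $n_1$-dimensional kernel spanned by the columns of $R_0$ and a complementary negative-definite block $D$ with eigenspace spanned by $R_-$; for $|\xi|\le\delta$ the spectrum splits into a ``slow'' group of $n_1$ eigenvalues bifurcating from $0$ and a ``fast'' group of $n_2$ eigenvalues staying near those of $D$. The Shizuta--Kawashima hypothesis of Definition \ref{SK} is exactly what forces the slow branches to satisfy $\mathrm{Re}\,\lambda_k(\xi)\le -c|\xi|^2$, with expansion $\lambda_k(\xi)=-ic_k|\xi|-d_k|\xi|^2+O(|\xi|^3)$ and $d_k>0$, while the fast branches keep a uniform negative real part. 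Writing $\hat{K}(t,\xi)=\sum_k e^{\lambda_k(\xi)t}P_k(\xi)$ with smooth spectral projectors $P_k(\xi)$, the isotropic bounds come from direct estimation of Gaussian-type integrals combined with Hausdorff--Young: the factor $t^{-\frac{n}{2}(1-1/p)}$ arises from $e^{-ct|\xi|^2}$, while each spatial derivative $\partial_x^\beta$ costs an extra $|\xi|^{|\beta|}$ that integrates to $t^{-|\beta|/2}$.

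The improvement for the cross blocks is the crucial algebraic observation: once one exploits the symmetry of the $A_j$ and the block form of $B$ in the conservative-dissipative form, the projectors satisfy $L_-P_k(\xi)R_0=O(|\xi|)$ on the slow branches and $L_0P_k(\xi)R_-=O(|\xi|)$ on the fast ones, with a full $O(|\xi|^2)$ gain on the $L_-\,\cdot\,R_-$ block arising from combining both. Each factor of $|\xi|$ is worth an extra $t^{-1/2}$ in the integral estimates, and this is what produces the four different decay rates distinguishing the conservative-conservative, cross, and dissipative-dissipative entries in the statement; splitting the $L^1$ norm of $w_0$ accordingly into $\|L_0 w_0\|_{L^1}$ and $\|L_- w_0\|_{L^1}$ yields the stated two-term form. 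For the remainder $\mathcal{K}(t)$, the classical Shizuta--Kawashima dissipation estimate $\mathrm{Re}\,\lambda(\xi)\le -c|\xi|^2/(1+|\xi|^2)$ combined with the cut-off $|\xi|\ge\delta$ gives a uniform spectral gap $\mathrm{Re}\,\lambda(\xi)\le -c'<0$, so Plancherel yields $\|D^\beta\mathcal{K}(t)w_0\|_{L^2}\le Ce^{-c't}\|D^\beta w_0\|_{L^2}$. The main obstacle is precisely verifying those algebraic identities on the spectral projectors: this is the heart of the Bianchini--Hanouzet--Natalini argument and is what converts the abstract (SK) hypothesis into refined block-dependent rates that go beyond the classical Kawashima estimates.
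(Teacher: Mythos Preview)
The paper does not prove this theorem: it is quoted verbatim as a result of Bianchini, Hanouzet and Natalini \cite{BiHaNa}, introduced by the phrase ``They proved the following theorem'' and left without argument. There is therefore no proof in the paper to compare against.

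That said, your sketch is an accurate outline of the actual Bianchini--Hanouzet--Natalini proof. The decomposition via a low-frequency cut-off, the analytic perturbation expansion of $e^{tM(\xi)}$ around $\xi=0$, the role of (SK) in forcing $\mathrm{Re}\,\lambda_k(\xi)\le -c|\xi|^2$ on the slow branches, and especially the block-structure observation $L_-P_k(\xi)R_0=O(|\xi|)$, $L_0P_k(\xi)R_-=O(|\xi|)$ yielding the extra $t^{-1/2}$ factors on the cross and dissipative blocks, are exactly the ingredients of their argument. The only place where the present paper touches the mechanics of these estimates is in the proof of the variant Theorem~\ref{Theo_BHN2}, and there it proceeds precisely as you describe: it starts from the pointwise Fourier bounds $|L_0\widehat{K(t)}w_0|\le Ce^{-c|\xi|^2 t}(|L_0\hat w_0|+|\xi||L_-\hat w_0|)$ and $|L_-\widehat{K(t)}w_0|\le Ce^{-c|\xi|^2 t}(|\xi||L_0\hat w_0|+|\xi|^2|L_-\hat w_0|)$, attributed to \cite{BiHaNa}, and then integrates in $\xi$. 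So your plan is consistent both with the cited source and with the way the paper itself handles the closely related estimate.
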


\subsection{Local Existence of Smooth Solutions}\label{Locale}
Since our aim is to prove the global existence of smooth solutions with small initial data to the complete hyperbolic-parabolic system \eqref{sistema_GUMA}, a sharp results of local existence of solutions in essential for our proof. Let us consider 
a more general semilinear hyperbolic-parabolic system
\begin{equation}
\label{sistema_generaleC}
\left\{
\begin{array}{l}
\partial_{t} u +\gamma \nabla \cdot v = F_1(u,v,\phi,\nabla \phi), \\\\
\partial_{t} v + \gamma \nabla u= F_2(u,v,\phi,\nabla \phi),\\\\
\partial_{t} \phi =\Delta \phi +F_3(u,v,\phi,\nabla \phi),
\end{array}
\right.
\end{equation}
where $u,\phi: \mathbb{R}^n\times \mathbb{R}^+ \rightarrow \mathbb{R}^+$, $v=(v_1,v_2,\ldots, v_n):\mathbb{R}^n\times \mathbb{R}^+ \rightarrow \mathbb{R}^n$, $F_1, F_3: \mathbb{R}^n\times \mathbb{R}^+ \rightarrow \mathbb{R}$ and $F_2: (F_2^1,\ldots, F_2^n):\mathbb{R}^n\times \mathbb{R}^+ \rightarrow \mathbb{R}^n$, with $F_i(0)=0$.
We complement the system with the initial conditions
\begin{equation}\label{dati_cauchyL}
u(x,0)=u_0(x), \quad v(x,0)=v_0(x), \quad \phi(x,0)=\phi_0(x),
\end{equation}
and with the regularity assumptions   
\begin{equation}\label{Ass_dati_cauchy}
u_0, v_0 \in H^s(\mathbb{R}^n), \quad \phi_0 \in H^{s+1}(\mathbb{R}^n).
\end{equation}

With reference to the local existence of smooth solutions to system (\ref{sistema_generaleC}), we recall the following result:
\begin{theorem}\label{locale}
There exists $t^*>0$, only depending on initial data, such that, under the assumptions that, for $i=1,2,3$, $F_i$ are locally Lipschitz maps,  problem \eqref{sistema_generaleC}-\eqref{dati_cauchyL}-\eqref{Ass_dati_cauchy}, has a unique local solution 
$$ 
w=(u,v)\in C([0,t^*),H^s(\mathbb{R}^n)), \quad  \phi\in C([0,t^*),H^{s+1}(\mathbb{R}^n)).
$$
\end{theorem}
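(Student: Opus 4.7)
The natural approach is a Banach fixed-point argument on the mild (Duhamel) formulation of \eqref{sistema_generaleC}. Denote by $S_h(t)$ the unitary group on $H^s(\mathbb{R}^n)^{n+1}$ generated by the symmetric hyperbolic operator associated to the first two equations, and by $e^{t\Delta}$ the heat semigroup. A solution $(w,\phi)=((u,v),\phi)$ is then a fixed point of the map
\[
\mathcal{T}(w,\phi)(t)=\Bigl(S_h(t)w_0+\int_0^t S_h(t-s)(F_1,F_2)(s)\,ds,\;\; e^{t\Delta}\phi_0+\int_0^t e^{(t-s)\Delta}F_3(s)\,ds\Bigr),
\]
with the $F_i$ evaluated at $(u,v,\phi,\nabla\phi)(s)$.

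I would work in the complete metric space $X_{t^*}:=C([0,t^*];H^s(\mathbb{R}^n))^{n+1}\times C([0,t^*];H^{s+1}(\mathbb{R}^n))$ (with $s$ large enough that $H^s$ is a Banach algebra, so that the compositions of the locally Lipschitz $F_i$ with $(u,v,\phi,\nabla\phi)$ admit tame Moser-type estimates) and restrict $\mathcal{T}$ to the closed ball $B_R$ of radius $R=2(\|w_0\|_{H^s}+\|\phi_0\|_{H^{s+1}})$ centered at the origin. The linear parts are immediate: $\|S_h(t)w_0\|_{H^s}\leq C\|w_0\|_{H^s}$ by the standard symmetric-hyperbolic energy estimate, and $\|e^{t\Delta}\phi_0\|_{H^{s+1}}\leq C\|\phi_0\|_{H^{s+1}}$. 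For the nonlinear contributions, local Lipschitzness of $F_i$ combined with Moser estimates yields $\|F_i(w,\phi,\nabla\phi)\|_{H^s}\leq C(R)$ on $B_R$, and the time integrals pick up a factor $t^*$; choosing $t^*$ small depending on $R$ then gives both stability of $B_R$ and strict contractivity of $\mathcal{T}$ in the weaker $C([0,t^*];L^2)$ topology (or directly in the ambient norm, depending on the closure argument preferred).

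The technical delicacy, and the main obstacle in the scheme, is the regularity mismatch between the two subsystems: the hyperbolic propagator $S_h(t)$ is not regularizing, so the term $\nabla\phi$ appearing in $F_1,F_2$ must already be controlled at the $H^s$ level, which is exactly why $\phi$ has to be tracked in $H^{s+1}$. The parabolic estimate closes thanks to the smoothing bound $\|e^{(t-s)\Delta}f\|_{H^{s+1}}\leq C(t-s)^{-1/2}\|f\|_{H^s}$, whose integrable singularity $\int_0^t(t-s)^{-1/2}\,ds=2\sqrt{t}$ keeps this gain harmless on short time intervals. Once the fixed point exists, uniqueness and the stated time continuity follow by applying the same Lipschitz/Grönwall argument to the difference of two solutions, yielding the claimed $w\in C([0,t^*);H^s(\mathbb{R}^n))$ and $\phi\in C([0,t^*);H^{s+1}(\mathbb{R}^n))$.
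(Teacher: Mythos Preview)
Your proposal is correct and follows precisely the approach the paper indicates: the paper does not actually give a proof of this theorem but simply states that it ``can be proved with a standard fixed point method'' and refers to \cite{Di2} and Theorem~2.9 in \cite{Ka}. Your Banach fixed-point argument on the Duhamel formulation, with the key observation that $\phi$ must be carried in $H^{s+1}$ so that $\nabla\phi\in H^s$ feeds correctly into the non-smoothing hyperbolic part, and that the parabolic smoothing recovers the extra derivative via the integrable $(t-s)^{-1/2}$ singularity, is exactly the standard method being alluded to.
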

This theorem can be proved with a standard fixed point method \cite{Di2}, and it is a special case of Theorem 2.9 in \cite{Ka}.

\section{The Cauchy Problem}
At the beginning of this section we recall some results which will be useful to establish the existence of global solutions to the more specific problem
\begin{equation}
\label{sistema3}
\left\{
\begin{array}{l}
\partial_{t} u +\gamma\nabla \cdot v = 0, \\\\
\partial_{t} v + \gamma\nabla u= -b(\phi,\nabla\phi)v+h(\phi,\nabla \phi)g(u),\\\\
\partial_{t} \phi =\Delta \phi +f(u,\phi),
\end{array}
\right.
\end{equation}
with the initial conditions 
\begin{equation}\label{dati_cauchy}
u(x,0)=u_0(x), \quad v(x,0)=v_0(x), \quad \phi(x,0)=\phi_0(x),
\end{equation}
and the regularity assumptions
\begin{equation}\label{Ass_Dati_2}
u_0,v_0\in H^s(\R^n)\cap L^1(\R^n), \quad \phi_0\in H^{s+1}(\R^n).
\end{equation}
In order to prove our results we make some assumptions on the functions $b,f,g,h$ on the right hand side in system \eqref{sistema3}.

\begin{description}
\item[($H_b$):] $b\in C^{s+1}(\mathbb{R}^{n+1})$ and
$$
b(z,w)=\beta+\bar{b}(z,w),\\
$$
where $\beta>0$, and for all fixed $K>0$
$$
|\bar{b}(z,w)|\leq B_k(|z|+|w|) \quad \textrm{ for all } z,w \in [-K,K],
$$
where $B_k$ is a suitable constant depending on $K$.
\item[($H_h$):]  $h\in C^{s+1}(\mathbb{R}^{n+1})$ and $h(0,0)=0$. In particular for all fixed $K>0$
with
$$
|h(z,w)|\leq H_k(|z|+|w|) \quad \textrm{ for all } z,w \in [-K,K],
$$
where $H_k$ is a suitable constant depending on $K$.
\item[($H_g$):]  $g\in C^{s+1}(\mathbb{R})$ and $g(0)=0$. For all fixed $K>0$
with
$$
|g(z)|\leq G_k|z| \quad \textrm{ for all } z\in [-K,K],
$$
where $G_k$ is a suitable constant depending on $K$.
\end{description}

Let us notice that under the assumptions $(H_h, H_g)$ this general sensitivity function, $h(\phi,\nabla\phi)g(u)$, covers different possible relations between species and chemical substance present in chemotaxis models as reported in \cite{HiPa}.

\begin{description}
\item [($H_f$):] $f\in C^{s+1}(\mathbb{R}^2)$ and
$$
f(z,w)=az-bw+\bar{f}(z,w),\\
$$
where $a,b>0$, and for all fixed $K>0$,
$$
|\bar{f}(z,w)|\leq F_k(|z|^2+|w|^2) \quad \textrm{ for all } z,\, w\in [-K,K],
$$
where $F_k$ is a suitable constant depending on $K$.
\end{description}
By these assumptions, we are led to consider the system
\begin{equation}
\label{sistema_H}
\left\{
\begin{array}{l}
\partial_{t} u +\gamma \nabla \cdot v = 0,\\ \\
\partial_{t} v + \gamma \nabla u= -\beta v-\bar{b}(\phi,\nabla\phi)v+h(\phi,\nabla \phi)g(u),\\\\
\partial_{t} \phi =\Delta \phi +au-b\phi +\bar{f}(u,\phi).
\end{array}
\right.
\end{equation}
It is possible to rewrite the above system as 
\begin{equation}
\label{sistema_H_com}
\left\{
\begin{array}{l}
\partial_{t} w +\sum\limits_{j=1}^n A_j\partial_{x_j} w = B w+\bar{B}(\phi,\nabla\phi)w+H(\phi,\nabla \phi,w),\\\\
\partial_{t} \phi =\Delta \phi +au-b\phi +\bar{f}(u,\phi),
\end{array}
\right.
\end{equation}
where
$$
A_j=\left(
\begin{array}{cc}
0 &\gamma e_j\\ \
\gamma e_j^t &0
\end{array}
\right),\quad B= \left(
\begin{array}{cc}
0 & 0 \\
0 & -\beta 
\end{array}
\right), 
$$
and 
$$
\bar{B}(\phi,\nabla\phi)=\left(
\begin{array}{c}
0 \\
-\bar{b}(\phi,\nabla\phi)
\end{array}
\right),\quad H(\phi,\nabla\phi,w)= \left(
\begin{array}{c}
0 \\
h(\phi,\nabla\phi)g(w)
\end{array}
\right). 
$$
Thanks to the regularity of source terms, the local Lipschitz condition yields. Then we can apply Theorem \ref{locale} and deduce the local existence of solution to \eqref{sistema_H}. \\
Before proceeding in our study of global existence of solutions we recall some well-known inequalities in the Sobolev spaces \cite{Ta3}.
\begin{proposition}\label{Propo1}
Let $u,v \in H^{s}(\mathbb{R}^n)\cap L^\infty(\mathbb{R}^n), \quad s>0, \quad |\beta|\leq s$, then
\begin{eqnarray*}\label{Taylor}
\|D^\beta(uv)\|_{L^2}\leq c(\|u\|_{L^\infty}\|D^\beta v\|_{L^2} +\|v\|_{L^\infty}\|D^\beta u\|_{L^2}).
\end{eqnarray*}
If $u,v \in H^{s+|\beta|}(\mathbb{R}^n)$,
$$
\|D^\beta(uv)\|_{H^s}\leq c(\|u\|_{L^\infty}\|D^\beta v\|_{H^s} +\|v\|_{L^\infty}\|D^\beta u\|_{H^s}),
$$
if $\beta=0$, then
$$
\|uv\|_{L^2}\leq \|u\|_{L^2}\|v\|_{L^\infty}.
$$
\end{proposition}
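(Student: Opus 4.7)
The three bounds are all classical Moser/Kato--Ponce-type product inequalities, so my plan is to invoke standard tools (Leibniz rule, H\"older, Gagliardo--Nirenberg interpolation) rather than anything new. I proceed in increasing order of difficulty: estimate (iii) first, then (i), and finally deduce (ii) from (i).

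Bound (iii) is immediate from H\"older's inequality: for $u\in L^{2}$ and $v\in L^{\infty}$,
\[
\|uv\|_{L^{2}}^{2}=\int |u(x)|^{2}|v(x)|^{2}\,dx \leq \|v\|_{L^{\infty}}^{2}\int |u(x)|^{2}\,dx,
\]
giving $\|uv\|_{L^{2}}\leq \|u\|_{L^{2}}\|v\|_{L^{\infty}}$.

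For bound (i), my plan is to expand $D^{\beta}(uv)$ by the Leibniz rule as
\[
D^{\beta}(uv)=\sum_{\alpha\leq\beta}\binom{\beta}{\alpha} D^{\alpha}u\, D^{\beta-\alpha}v,
\]
and then, for each intermediate multi-index $\alpha$ with $0<|\alpha|<|\beta|$, pair the two factors via H\"older with exponents $1/p+1/q=1/2$ chosen as $p=2|\beta|/|\alpha|$, $q=2|\beta|/(|\beta|-|\alpha|)$. The key analytic input is then the Gagliardo--Nirenberg interpolation
\[
\|D^{\alpha}u\|_{L^{p}}\leq C\,\|u\|_{L^{\infty}}^{1-|\alpha|/|\beta|}\,\|D^{\beta}u\|_{L^{2}}^{|\alpha|/|\beta|},
\]
and analogously for $v$ with the complementary exponent. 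Multiplying the two interpolation inequalities and using Young's inequality $ab\leq \tfrac{1}{r}a^{r}+\tfrac{1}{r'}b^{r'}$ with $r=|\beta|/|\alpha|$, the mixed term collapses into the desired right-hand side $\|u\|_{L^{\infty}}\|D^{\beta}v\|_{L^{2}}+\|v\|_{L^{\infty}}\|D^{\beta}u\|_{L^{2}}$. The two endpoint terms $\alpha=0$ and $\alpha=\beta$ already have exactly this form after a plain H\"older step.

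Bound (ii) follows by summing bound (i) (and the trivial $L^{2}$ product case) over all multi-indices $\gamma$ with $|\gamma|\leq s$ applied to the derivative $D^{\beta}(uv)$: writing $D^{\gamma}D^{\beta}(uv)=D^{\beta+\gamma}(uv)$ and applying (i) with $\beta$ replaced by $\beta+\gamma$, then using the assumption $u,v\in H^{s+|\beta|}$ to control $\|D^{\beta+\gamma}u\|_{L^{2}}$ and $\|D^{\beta+\gamma}v\|_{L^{2}}$ by $\|D^{\beta}u\|_{H^{s}}$ and $\|D^{\beta}v\|_{H^{s}}$, summing in $\gamma$ produces the stated $H^{s}$-bound. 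The only delicate point of the whole argument is the interpolation step in (i); everything else is bookkeeping. Since this is a restatement of a well-known Moser estimate (see, e.g., Taylor's reference cited just before the proposition), I would in practice just quote it, but the sketch above shows how a self-contained proof would go.
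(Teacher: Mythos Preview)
Your sketch is correct and is exactly the standard Moser-type argument (Leibniz rule, H\"older with complementary exponents, Gagliardo--Nirenberg interpolation, then Young to collapse the mixed terms). Note, however, that the paper does not actually prove this proposition: it is stated without proof as a ``well-known inequality in the Sobolev spaces'' with a citation to Taylor, so there is no paper-side argument to compare against---you have simply supplied the proof that the reference would contain.
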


\begin{proposition}\label{F}
Let $F$ be smooth and assume $F(0)=0$. Then, for $u \in H^{s}(\R^n)\cap L^\infty(\mathbb{R}^n)$
\begin{eqnarray*}\label{Taylor_bis}
\|F(u)\|_{H^s}\leq C_s(\|u\|_{L^\infty})(1+\|u\|_{H^s}).
\end{eqnarray*}
\end{proposition}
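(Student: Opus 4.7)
The plan is to reduce $\|F(u)\|_{H^s}$ to bounding $\|D^\beta F(u)\|_{L^2}$ for every multi-index $\beta$ with $|\beta|\leq s$, and to handle the $\beta=0$ case separately from the higher derivatives. For $\beta=0$, since $F(0)=0$ and $F$ is smooth, I would write $F(u)=u\,G(u)$ with $G(u)=\int_{0}^{1}F'(\tau u)\,d\tau$. Because $|u(x)|\leq \|u\|_{L^\infty}$ pointwise, $G(u)$ is bounded by $\sup_{|z|\leq \|u\|_{L^\infty}}|F'(z)|$, giving $\|F(u)\|_{L^2}\leq C(\|u\|_{L^\infty})\|u\|_{L^2}$.

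For $1\leq |\beta|\leq s$, the key tool is the Faà di Bruno formula, which expresses
\[
D^\beta F(u)=\sum_{k=1}^{|\beta|} F^{(k)}(u)\sum_{\substack{\beta_1+\cdots+\beta_k=\beta\\ |\beta_j|\geq 1}} c_{\beta_1,\dots,\beta_k}\,D^{\beta_1}u\cdots D^{\beta_k}u.
\]
The prefactors $F^{(k)}(u)$ are again pointwise controlled in $L^\infty$ by $\sup_{|z|\leq \|u\|_{L^\infty}}|F^{(k)}(z)|$, which is finite by smoothness of $F$ and enters the constant $C_s(\|u\|_{L^\infty})$. It remains to bound each product $\prod_{j=1}^k D^{\beta_j}u$ in $L^2$.

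For this I would invoke the Gagliardo--Nirenberg interpolation inequalities: for each factor, with exponent $p_j=2|\beta|/|\beta_j|$,
\[
\|D^{\beta_j}u\|_{L^{p_j}}\leq C\,\|u\|_{L^\infty}^{1-|\beta_j|/|\beta|}\|D^{|\beta|}u\|_{L^2}^{|\beta_j|/|\beta|}.
\]
Since $\sum_j 1/p_j = 1/2$ and $\sum_j |\beta_j|/|\beta|=1$, Hölder's inequality together with these estimates yields
\[
\Bigl\|\prod_{j=1}^k D^{\beta_j}u\Bigr\|_{L^2}\leq C\,\|u\|_{L^\infty}^{k-1}\,\|u\|_{H^s}.
\]
Summing over $k$ and over the partitions of $\beta$, and then over $|\beta|\leq s$, absorbs all remaining combinatorial constants and powers of $\|u\|_{L^\infty}$ into $C_s(\|u\|_{L^\infty})$, producing the desired bound (with the harmless $+1$ to cover the additive $L^2$ contribution uniformly).

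The main obstacle is the Gagliardo--Nirenberg step: one must verify that the chosen exponents are admissible (in particular $p_j\in [2,\infty]$ and compatible with the dimension $n$), and that the endpoint cases $|\beta_j|=|\beta|$ or $|\beta_j|=0$ are handled by replacing the interpolation by a trivial $L^2$ or $L^\infty$ bound. Once these cases are checked, the rest is bookkeeping, and the statement follows as a special case of the classical Moser composition estimate.
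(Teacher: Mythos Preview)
Your argument is correct: this is the standard Moser-type composition estimate, and the route via Fa\`a di Bruno together with the Gagliardo--Nirenberg interpolation $\|D^{\beta_j}u\|_{L^{2|\beta|/|\beta_j|}}\leq C\|u\|_{L^\infty}^{1-|\beta_j|/|\beta|}\|D^{|\beta|}u\|_{L^2}^{|\beta_j|/|\beta|}$ (which is admissible here, since $q=\infty$, $r=2$ and $\theta=|\beta_j|/|\beta|$ lie in the valid range) is exactly the classical proof. The endpoint checks you flag are routine.

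As for comparison: the paper does not prove this proposition at all. It is stated, along with the neighbouring product and composition inequalities, as a well-known Sobolev estimate quoted from the literature (the reference \cite{Ta3}, Taylor's PDE text), and is used as a black box in the subsequent a priori bounds. Your write-up therefore supplies what the paper omits, and does so by the same argument one finds in that reference.
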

\begin{proposition}\label{Prop_h}
Let $u\in H^s(\mathbb{R}^n)\cap L^{\infty}(\mathbb{R}^n)$ $(s\geq 1)$ such that there exists $\gamma_0>0$ that for $(x,t)\in \mathbb{R}^n\times [0,+\infty)$,
$$
|u(x,t)|\leq \gamma_0.
$$
Then for every smooth function $h$
\begin{equation*}\label{prop}
\|D^\beta h(u)\|_{L^2}\leq C_{\beta}\|h'\|_{C^{|\beta|-1}(|u|\leq \gamma_0)}\|u\|_{L^\infty}^{|\beta|-1}\|D^\beta u\|_{L^2},
\end{equation*} 
with $\beta\neq 0$, $|\beta|\leq s$.
\end{proposition}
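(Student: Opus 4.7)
The plan is to establish this Moser-type composition estimate by combining Fa\`a di Bruno's formula with the Gagliardo--Nirenberg interpolation inequality, which is the standard template for nonlinear bounds in Sobolev spaces.

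First I would apply Fa\`a di Bruno to expand
\begin{equation*}
D^\beta h(u) = \sum_{k=1}^{|\beta|} h^{(k)}(u) \sum_{(\alpha_1,\ldots,\alpha_k)} c_{\alpha_1,\ldots,\alpha_k} \prod_{i=1}^k D^{\alpha_i} u,
\end{equation*}
where the inner sum runs over tuples of nonzero multi-indices with $\alpha_1+\cdots+\alpha_k = \beta$ and the $c_{\alpha_1,\ldots,\alpha_k}$ are combinatorial constants. Using the hypothesis $|u(x,t)|\le\gamma_0$, each factor $h^{(k)}(u(x))$ is controlled uniformly in $L^\infty$ by $\|h'\|_{C^{|\beta|-1}(|u|\le\gamma_0)}$ for every $1\le k\le|\beta|$, so the problem reduces to bounding each product of derivatives in $L^2$.

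I would estimate each such product via H\"older's inequality with exponents $p_i = 2|\beta|/|\alpha_i|$, which by construction satisfy $\sum_i 1/p_i = 1/2$ since $\sum_i |\alpha_i| = |\beta|$. Each factor is then interpolated through Gagliardo--Nirenberg,
\begin{equation*}
\|D^{\alpha_i} u\|_{L^{p_i}} \le C\,\|u\|_{L^\infty}^{1-|\alpha_i|/|\beta|}\,\|D^\beta u\|_{L^2}^{|\alpha_i|/|\beta|},
\end{equation*}
whose interpolation exponent $\theta=|\alpha_i|/|\beta|$ is exactly matched to the choice of $p_i$. Multiplying over $i=1,\ldots,k$ and using $\sum_i |\alpha_i|/|\beta| = 1$, the $\|D^\beta u\|_{L^2}$ exponents sum to one while the $L^\infty$ exponents collapse to $k-1$, giving
\begin{equation*}
\Bigl\| \prod_{i=1}^k D^{\alpha_i} u \Bigr\|_{L^2} \le C\,\|u\|_{L^\infty}^{k-1}\,\|D^\beta u\|_{L^2}.
\end{equation*}

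Summing over the finitely many Fa\`a di Bruno terms and absorbing combinatorial constants into $C_\beta$ yields the announced inequality, after majorizing $\|u\|_{L^\infty}^{k-1}$ by $\|u\|_{L^\infty}^{|\beta|-1}$ (trivially when $\|u\|_{L^\infty}\ge1$, and otherwise at the cost of a $\gamma_0$-dependent constant absorbed into $C_\beta$). The main technical point is the bookkeeping: the choice $p_i = 2|\beta|/|\alpha_i|$ is what makes H\"older's condition and the Gagliardo--Nirenberg scaling simultaneously consistent, and this requires verifying the admissibility of the $L^\infty$--$L^2$ endpoint form of Gagliardo--Nirenberg for every $1\le |\alpha_i|\le|\beta|\le s$. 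This is classical but must be aligned carefully so that the exponent identities close up to produce the clean power $\|u\|_{L^\infty}^{|\beta|-1}$ in the final bound.
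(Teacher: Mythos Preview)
The paper does not prove this proposition; it is quoted as a standard Moser-type composition estimate with a reference to Taylor's book. Your approach via Fa\`a di Bruno plus Gagliardo--Nirenberg with the H\"older exponents $p_i = 2|\beta|/|\alpha_i|$ is exactly the classical textbook argument, and the computation through the product bound
\[
\Bigl\| \prod_{i=1}^k D^{\alpha_i} u \Bigr\|_{L^2} \le C\,\|u\|_{L^\infty}^{k-1}\,\|D^\beta u\|_{L^2}
\]
is correct.

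There is, however, a genuine gap in your final step, and you have in fact put your finger on a defect of the statement as literally written. Your claim that $\|u\|_{L^\infty}^{k-1}$ can be majorized by $\|u\|_{L^\infty}^{|\beta|-1}$ ``at the cost of a $\gamma_0$-dependent constant'' is false when $\|u\|_{L^\infty}<1$ and $k<|\beta|$: the ratio $\|u\|_{L^\infty}^{k-1}/\|u\|_{L^\infty}^{|\beta|-1}=\|u\|_{L^\infty}^{k-|\beta|}$ is unbounded as $\|u\|_{L^\infty}\to 0$. Concretely, the $k=1$ term $h'(u)\,D^\beta u$ contributes $\|h'\|_{L^\infty}\|D^\beta u\|_{L^2}$ with no $\|u\|_{L^\infty}$ factor at all; taking $h(u)=u$ and $|\beta|\ge 2$ shows the inequality as stated cannot hold with a constant independent of $\|u\|_{L^\infty}$. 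The sharp version that your argument actually proves carries the factor $\sum_{k=1}^{|\beta|}\|u\|_{L^\infty}^{k-1}$, equivalently $(1+\|u\|_{L^\infty})^{|\beta|-1}$. In the paper this is harmless: every application immediately replaces $\|u\|_{L^\infty}^{|\beta|-1}$ by the a~priori bound $K^{|\beta|-1}$ (see e.g.\ the displays for $\|\bar b(\phi,\nabla\phi)\|_{H^s}$ in the proof of Lemma~\ref{boundinf}), so only the weaker estimate is ever used. But you should state what you have proved, namely the bound with $(1+\|u\|_{L^\infty})^{|\beta|-1}$, rather than manufacture the exact power $\|u\|_{L^\infty}^{|\beta|-1}$ by an argument that does not close.
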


\subsection{Continuation Principle}
Now we are going to prove the existence of global solutions to problem \eqref{sistema_H}-\eqref{dati_cauchy}-\eqref{Ass_Dati_2} using the following Continuation Principle.
 
\begin{proposition}\label{continuation}
Let $T<+\infty$ be the maximal time of existence for a local solution $(w,\phi)$ to system \eqref{sistema_H}-\eqref{dati_cauchy}-\eqref{Ass_Dati_2}.
Then
$$
\limsup_{t\rightarrow T^-} \|w(t)\|_{H^s}+\|\phi(t)\|_{H^{s+1}}=+\infty.
$$
\end{proposition}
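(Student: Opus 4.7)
The proof is a standard bootstrap/restart argument based on the local existence theorem, so I would argue by contradiction. Assume that there exists a constant $M < \infty$ such that
\begin{equation*}
\limsup_{t\to T^-} \bigl(\|w(t)\|_{H^s}+\|\phi(t)\|_{H^{s+1}}\bigr)=M<+\infty.
\end{equation*}
The goal is then to extend the solution past $T$, contradicting maximality.

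First I would pick any sequence $t_n \nearrow T$ and use the assumed bound to ensure that $(w(t_n),\phi(t_n))$ lies in a fixed ball $B_M\subset H^s(\R^n)\times H^{s+1}(\R^n)$ for $n$ large. The crucial input is that the local existence time $t^\ast$ provided by Theorem \ref{locale} depends only on the size of the initial data in this space (because the source terms $F_1,F_2,F_3$ built from $b,h,g,f$ are locally Lipschitz, by assumptions $(H_b),(H_h),(H_g),(H_f)$ and Propositions \ref{Propo1}--\ref{Prop_h}). Consequently there is a uniform existence time $t^\ast_M>0$ attached to every Cauchy datum in $B_M$.

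Next I would choose $t_0\in[0,T)$ so close to $T$ that $T-t_0 < t^\ast_M/2$, and solve the Cauchy problem for \eqref{sistema_H} starting from $(w(t_0),\phi(t_0))\in B_M$. Theorem \ref{locale} delivers a solution
\[
(\tilde w,\tilde\phi)\in C([t_0,t_0+t^\ast_M);H^s(\R^n))\times C([t_0,t_0+t^\ast_M);H^{s+1}(\R^n)).
\]
By the uniqueness statement of Theorem \ref{locale}, $(\tilde w,\tilde\phi)$ coincides with $(w,\phi)$ on $[t_0,T)$, so concatenating the two gives a solution on $[0,t_0+t^\ast_M)$ with $t_0+t^\ast_M>T$. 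This contradicts the maximality of $T$, and therefore the $\limsup$ must be $+\infty$.

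The only delicate point is ensuring that the hypotheses of Theorem \ref{locale} are genuinely verified at the restart time $t_0$, namely that $w(t_0)\in H^s$ and $\phi(t_0)\in H^{s+1}$ with norms controlled by $M$; this is exactly the content of the assumed bound combined with the fact that the local solution built in Section 2.3 belongs to $C([0,T);H^s)\cap C([0,T);H^{s+1})$, so the traces at $t_0$ make sense and lie in the correct spaces. No finer information on the equation is needed beyond what has already been set up, which is why the argument reduces to a clean application of the local existence theorem.
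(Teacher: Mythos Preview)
Your argument is correct and follows essentially the same route as the paper: assume by contradiction that the $H^s\times H^{s+1}$ norm stays bounded near $T$, invoke the fact that the local existence time in Theorem \ref{locale} depends only on the size of the data, restart from some $t_0$ close to $T$, and extend past $T$ by uniqueness. The only cosmetic slip is that from $\limsup=M$ you can only place the restarted data in $B_{M+\varepsilon}$ rather than $B_M$, but this changes nothing in the argument.
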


\begin{proof}
Let $(w,\phi)$ be a given local smooth solution on a maximal time interval $(0,T_{max})$. \\
Let $T>T_{max}$ and assume there exists an a priori bound
$$
R:=\sup_{(0,T)}\max \left\{ \|\phi\|_{H^{s+1}},\|w\|_{H^s}\right\}.
$$
Let $t_R>0$ be the maximal time of existence of solutions to the Cauchy problem, with 
$\|w_0\|_{H^s}$, $\|\phi_0\|_{H^{s+1}}\leq R$.
Then, there exists $\bar{t}\in \left(T-\frac{t_R}{2},T\right)$ such that, we can consider the functions $w(x,\bar{t})\in H^s(\mathbb{R}^n)$ and $\phi(x,\bar{t})\in H^{s+1}(\mathbb{R}^n)$ as initial data for a new Cauchy problem, with maximal time of existence $\overline{T}=\bar{t}+t_R>T_{max}$, and we find a contradiction.
\end{proof}

From the previous result, it is enough to estabilish an a priori $H^s$, $H^{s+1}$ bound to give the global existence.
Beside we can notice that to prove the global existence result, it is enough to prove the boundness of $L^\infty$-norm of functions $(w,\phi)$, as showed by the following Lemma.
\begin{lemma}\label{boundinf}
Let $(w,\phi) \in C([0,t),(H^s(\R^n))\times C([0,t),H^{s+1}(\R^n)) $ a solution of (\ref{sistema_H}) for $\, 0\leq t\leq T$, where $\|w(t)\|_{L^\infty}$, $\| \phi(t)\|_{W^{1,\infty}}\leq K$, then there will exist a constant $C_k$ such that,
$$
\|w(t)\|_{H^s}+\|\phi(t)\|_{H^{s+1}}\leq c(\|w_0\|_{H^s}+\|\phi_0\|_{H^{s+1}})e^{C_k (t)}, \qquad 0\leq t\leq T. 
$$
\end{lemma}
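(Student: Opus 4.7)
My approach is to derive coupled energy estimates for $w$ in $H^s$ and for $\phi$ in $H^{s+1}$ simultaneously, exploiting the symmetry of the hyperbolic principal part and the smoothing of the heat operator to dispose of the top-order terms, controlling the nonlinearities with the Moser and composition estimates of Propositions \ref{Propo1}--\ref{Prop_h}, and finally closing the system with Gronwall's inequality.

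First, I would apply $D^\alpha$ with $|\alpha|\leq s$ to the compact form \eqref{sistema_H_com} of the hyperbolic block and pair with $D^\alpha w$ in $L^2$. Because the constant matrices $A_j$ are symmetric, the transport contribution $\sum_j\int D^\alpha w\cdot A_j\partial_{x_j}D^\alpha w\,dx$ vanishes after integration by parts, and the linear damping $B$ produces the non-positive term $-\beta\|D^\alpha v\|_{L^2}^2$. Summing over $\alpha$ one obtains
\[
\tfrac{1}{2}\tfrac{d}{dt}\|w\|_{H^s}^2 + \beta\|v\|_{H^s}^2 \leq \sum_{|\alpha|\leq s}\int D^\alpha w\cdot D^\alpha\bigl(\bar B(\phi,\nabla\phi)w+H(\phi,\nabla\phi,w)\bigr)\,dx.
\]
The right-hand side is bounded via the product estimate in Proposition \ref{Propo1} combined with the composition estimate in Proposition \ref{Prop_h} applied to $\bar b(\phi,\nabla\phi)$ and $h(\phi,\nabla\phi)$, and to $g(u)$; since $\bar b(0,0)=h(0,0)=g(0)=0$ and $\|w\|_{L^\infty},\|\phi\|_{W^{1,\infty}}\leq K$, every constant that arises depends only on $K$, yielding a bound of the form $C_K(\|w\|_{H^s}+\|\phi\|_{H^{s+1}})\|w\|_{H^s}$.

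For the parabolic equation I would apply $D^\alpha$ with $|\alpha|\leq s+1$ and pair with $D^\alpha\phi$. Integration by parts on $\Delta\phi$ yields the parabolic gain $-\|\nabla\phi\|_{H^{s+1}}^2$, while the term $-b\phi$ contributes non-positively. The one delicate point I expect is the top-order piece $a\int D^\alpha u\,D^\alpha\phi\,dx$ at $|\alpha|=s+1$, since $u$ is only controlled in $H^s$: I would write $D^\alpha=\partial_j D^{\alpha-e_j}$ and integrate by parts to move the extra derivative onto $\phi$, obtaining
\[
\Bigl|a\int D^{\alpha-e_j}u\cdot\partial_j D^\alpha\phi\,dx\Bigr|\leq \epsilon\|\nabla\phi\|_{H^{s+1}}^2+C_\epsilon\|u\|_{H^s}^2,
\]
and absorb the first term into the parabolic gain by choosing $\epsilon$ small. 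The quadratic correction $\bar f(u,\phi)$ is handled analogously at top order, estimating $\|\bar f(u,\phi)\|_{H^s}$ by Proposition \ref{F} (using $\bar f(0,0)=0$ and the $L^\infty$ bound $K$) and all lower orders directly by Propositions \ref{Propo1}--\ref{Prop_h}.

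Setting $E(t):=\|w(t)\|_{H^s}^2+\|\phi(t)\|_{H^{s+1}}^2$, adding the two energy inequalities and discarding the non-negative dissipative contributions $\beta\|v\|_{H^s}^2$, $(1-\epsilon)\|\nabla\phi\|_{H^{s+1}}^2$ and $b\|\phi\|_{H^{s+1}}^2$, one arrives at a differential inequality of the form $\tfrac{d}{dt}E(t)\leq 2C_K E(t)$. Gronwall then produces $E(t)\leq E(0)\,e^{2C_K t}$, and extracting the square root gives the stated bound with an appropriate constant depending only on $K$. The main obstacle in the plan is precisely the absorption step at the top order of the parabolic estimate; once that is handled, the rest is bookkeeping with the Moser-type inequalities.
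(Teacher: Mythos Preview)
Your argument is correct, but it follows a genuinely different route from the paper's. The paper does not run energy estimates: it writes both $w$ and $\phi$ via Duhamel's formulas with the Green kernels $\Gamma^h$ and $\Gamma^p$, bounds $\|w(t)\|_{H^s}$ and $\|\phi(t)\|_{H^{s+1}}$ as integral inequalities (using Propositions~\ref{Propo1}--\ref{Prop_h} exactly as you do for the nonlinear terms), and then closes with a singular Gronwall lemma. The top-order mismatch that you absorb into the parabolic dissipation is handled there instead by the smoothing factor $\|D_x\Gamma^p(t-s)\|_{L^1}\le C(t-s)^{-1/2}$, which produces the integrable kernel $1+(t-s)^{-1/2}$ in their Gronwall step and the final bound $e^{C_k(t+\sqrt t)}$. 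Your direct energy method is more self-contained---it needs neither $\Gamma^h$ nor Theorem~\ref{existence}, only the symmetry of the $A_j$---and yields the cleaner exponent $e^{C_Kt}$; the paper's Duhamel route, on the other hand, is consistent with the Green-function machinery exploited in the rest of the article and reuses estimates that will appear again in the decay analysis.
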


\begin{proof}
Let $\|w(t)\|_{L^\infty},\| \phi(t)\|_{W^{1,\infty}}\leq K$, then we want to prove that $H^s$ norms of these functions are bounded.\\
Thanks to the Duhamel's formula we can write the solution of the hyperbolic part as
$$
w(x,t)=(\Gamma^h(t)\ast w_0)(x)+\int_0^t \Gamma^h(t-s)\ast (\bar{B}(\phi,\nabla\phi)(s)w(s)+H(\phi,\nabla\phi,w)(s))ds,
$$
where $\Gamma^h$ is the Green function of system \eqref{C-D_uv}.
Then 
\begin{eqnarray*}
\|w(t)\|_{H^s} &\leq& C\|w_0\|_{H^s}+ \int_0^t C\|\bar{B}(\phi,\nabla\phi)(s)w(s)\|_{H^s}+ \|H(\phi,\nabla\phi,w)(s)\|_{H^s}ds,
\end{eqnarray*}

by Proposition \ref{Propo1} we deduce
\begin{align*}
\|w(t)\|_{H^s} \leq& C\|w_0\|_{H^s}+C\int_0^t (\|\bar{b}(\phi,\nabla\phi)(s)\|_{L^\infty}\|w(s)\|_{H^s}+\|w(s)\|_{L^\infty}\|\bar{b}(\phi,\nabla\phi)(s)\|_{H^s}ds\\
+&C\int_0^t\|h(\phi,\nabla\phi)(s)\|_{L^\infty}\|g(w)\|_{H^s} +\|h(\phi,\nabla\phi)(s)\|_{H^s}\|g(w)\|_{L^\infty}ds.
\end{align*}
Let us observe that, by assumptions $(H_b)$,$(H_f)$,$(H_g)$,$(H_h)$ and Proposition \ref{Prop_h}, we have
\begin{eqnarray*}
\|g(w)\|_{H^s}&\leq& 
CG_k \|w\|_{L^2}+C_{\bar{g}'}\|w\|_{L^\infty}^{s-1}\| w\|_{H^s}.\nonumber\\
\|h(\phi,\nabla\phi)\|_{H^s}
&\leq&C[H_k(\|\phi\|_{L^2}+\|\nabla\phi\|_{L^2})+ C_{\bar{h}'} K^{s-1}\|\phi\|_{H^{s+1}}].\nonumber\\
\|\bar{b}(\phi,\nabla\phi)\|_{H^s}&\leq& C[B_k(\|\phi\|_{L^2}+\|\nabla\phi\|_{L^2})+ C_{\bar{b}'} K^{s-1}\|\phi\|_{H^{s+1}}].\nonumber\\
\|\bar{f}(u,\phi)\|_{H^s}
&\leq& C[F_k(\|u\|_{L^2}\|u\|_{L^\infty}+\|\phi\|_{L^2}\|\phi\|_{L^\infty})+C_{\bar{f}'} K^{s-1}(\|\phi\|_{H^{s}}+\|u\|_{H^s})].\nonumber
\end{eqnarray*}

By previous inequalities we get 
\begin{align*}
\|w(t)\|_{H^s}\leq&  c(\|w_0\|_{H^s}+\int_0^t (B_k+H_kG_k)(\|\phi(s)\|_{L^\infty}+\|\nabla\phi(s)\|_{L^\infty})\|w(s)\|_{H^s}ds\\
+&\int_0^t (B_k+H_kG_k)\|w(s)\|_{L^\infty}(\|\phi(s)\|_{L^2}+\|\nabla\phi(s)\|_{L^2})ds\\
+&\int_0^t \|w(s)\|_{L^\infty}(C_{\bar{b}'}+C_{\bar{h}'})K^{s-1}(\|\phi\|_{H^s}+\|\nabla\phi\|_{H^s})ds.
\end{align*}
The last relation can be written as:
\begin{eqnarray}\label{stimaW2}
\|w(t)\|_{H^s}\leq C\left(\|w_0\|_{H^s}+\int_0^t M_k(\|\phi\|_{H^{s+1}}+\|w(s)\|_{H^s})ds\right),
\end{eqnarray}
where the constant $M_k$ depends on $K$ and $C_{b'}$, $C_{h'}$.\\
Let us consider now the solution of the parabolic equation, that thanks to Duhamel's formula we can write as
$$
\phi(x,t)=(e^{-bt}\Gamma^p(t)\ast \phi_0)(x)+\int_0^t e^{-b(t-s)}\Gamma^p(t-s)\ast ( a u(s)+\bar{f}(u,\phi)(s))ds.
$$
Then, we can estimate the $H^{s+1}$-norm of $\phi$ as follows
\begin{eqnarray*}
\|\phi(t)\|_{H^{s+1}} 
&\leq& C\|\phi_0\|_{H^{s+1}}+\int_0^t a\|w(s)\|_{H^s} + (1+(t-s)^{-\frac{1}{2}})\|\bar{f}(u,\phi)(s)\|_{H^s} ds\\
&\leq&C\|\phi_0\|_{H^{s+1}}+D_k\int_0^t (1+(t-s)^{-\frac{1}{2}})\left(\|w(s)\|_{H^s}+\|\phi(s)\|_{H^{s+1}}\right)ds,
\end{eqnarray*}
where the constant $D_k$ depends on $K$ and $C_{f'}$. If we sum the last inequality and \eqref{stimaW2}  we obtain
\begin{eqnarray*}
\|w(t)\|_{H^s}+\|\phi(t)\|_{H^{s+1}}&\leq& C(\|\phi_0\|_{H^{s+1}}+\|w_0\|_{H^s})\\
&+&\int_0^t (1+(t-s)^{-\frac{1}{2}})(D_k+M_k)(\|w(s)\|_{H^s}+\|\phi(s)\|_{H^{s+1}})ds.
\end{eqnarray*}

Applying Gronwall's Lemma we easily deduce
\begin{equation}\label{Stimafinale}
\|w(t)\|_{H^s}+\|\phi(t)\|_{H^{s+1}}\leq \tilde{c}(\|w_0\|_{H^s}+\|\phi_0\|_{H^{s+1}})e^{(D_k+M_k)(t+\sqrt{t})}.
\end{equation}
\end{proof}

\subsection{Global Existence and Asymptotic Behavior of Smooth Solutions}\label{Globale}
In this section our aim is to prove the boundness of solutions to system \eqref{sistema_H} for every time $t$.
Once that this result will be obtained, we could easily prove the global existence of solutions by Lemma \ref{boundinf} and  Continuation Principle \ref{continuation}. 
The estimates are built up on sharp decay estimates, obtained by Theorem \ref{Theo_BHN} for the Green function of the hyperbolic operator and the known decay of the heat kernel.\\
Let us observe that by this approach, we get simultaneously the boundness of 
norm of solutions and also their decay rates.  
Given $\delta>0$, let us define for a given function $g$ the functionals
\begin{equation*}\label{funzM}
M^\delta_g(t)=\sup_{(0,t)}(\max\{1,s^\delta\}\|g(s)\|_{L^2}),
\end{equation*}

\begin{equation*}\label{funzN}
N^\delta_g(t)=\sup_{(0,t)}(\max\{1,s^\delta\}\|g(s)\|_{L^\infty}).
\end{equation*}
Moreover let us denote by $D_x^s$ any space derivative $D_x^\alpha$, such that $|\alpha|=s $.\\
Before starting our proof, let us recall an useful lemma  \cite{BiHaNa}:
\begin{lemma}\label{Lemma5.2}
For any $ \gamma, \delta\geq 0$, $t\geq 2$
$$
\nu:=\min\left\{\gamma, \delta, \gamma +\delta-1 \right\},
$$
it holds
\begin{equation*}
\int_0^t \min\left\{1,(t-s)^{-\gamma}\right\}\min\left\{1,s^{-\delta}\right\}ds\leq C\cdot 
\left\{ \begin{array}{ll}
\min\left\{1,t^{-\nu}\right\} , & \gamma, \delta\neq 1,\\
 \min\left\{1,t^{-\nu}(1+\ln t)\right\},& \gamma\leq 1, \delta=1 \textrm{ or } \, \gamma= 1, \delta\leq1,\\
\min\left\{1,t^{-1}\right\}, & \gamma>1, \delta=1 \textrm{ or }\, \gamma= 1, \delta>1,
\end{array}
\right.
\end{equation*}
\begin{equation*}
\int_0^t \min\left\{1,s^{-\delta}\right\}ds\leq C\cdot 
\left\{ \begin{array}{ll}
1, & \delta> 1,\\
\ln t,&  \delta=1,\\
t^{1-\delta}, & 0\leq \delta<1, 
\end{array}
\right.
\end{equation*}
\begin{equation*}
\int_0^t e^{-c(t-s)}\min\left\{1,s^{-\delta}\right\}ds\leq C \min\left\{1,s^{-\delta}\right\}, \quad \gamma\geq 0.
\end{equation*}
\end{lemma}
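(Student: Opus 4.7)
My plan is to prove the three inequalities in order, since the first follows from the second by a splitting argument.

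First I would handle the second inequality, which is really the base computation. I would split $[0,t] = [0,1] \cup [1,t]$: on $[0,1]$ we have $\min\{1, s^{-\delta}\} = 1$ contributing at most $1$, and on $[1,t]$ we simply have $\int_1^t s^{-\delta}\,ds$, which is bounded by a constant when $\delta > 1$, equals $\ln t$ when $\delta = 1$, and equals $\tfrac{t^{1-\delta} - 1}{1 - \delta}$ when $0 \le \delta < 1$. Collecting these three cases gives the stated bounds.

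Next I would prove the third inequality, an exponential-against-polynomial convolution. I would split the integral at $t/2$. On $[0,t/2]$ the exponential factor satisfies $e^{-c(t-s)} \le e^{-ct/2}$, so the whole piece decays faster than any polynomial and is therefore dominated by $C\min\{1,t^{-\delta}\}$. On $[t/2,t]$, since $s \ge t/2 \ge 1$, we have $\min\{1,s^{-\delta}\} \le C\min\{1,t^{-\delta}\}$, which pulls out of the integral, leaving $\int_{t/2}^t e^{-c(t-s)}\,ds \le 1/c$. (I note the right-hand side of the statement should read $\min\{1,t^{-\delta}\}$; this seems to be a typo.)

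The main work is the first inequality, and here the key device is again the splitting $[0,t] = [0,t/2] \cup [t/2,t]$. On $[0,t/2]$ we have $t-s \ge t/2 \ge 1$, so $\min\{1,(t-s)^{-\gamma}\} \le C t^{-\gamma}$; pulling this factor out and applying the second inequality to the $s$-integral gives a bound of $C t^{-\gamma}$ times $1$, $\ln t$, or $t^{1-\delta}$, depending on whether $\delta > 1$, $\delta = 1$, or $\delta < 1$. Symmetrically, on $[t/2,t]$ we have $s \ge t/2 \ge 1$, so $\min\{1, s^{-\delta}\} \le C t^{-\delta}$, and the change of variables $u = t - s$ turns the remaining integral into $\int_0^{t/2} \min\{1, u^{-\gamma}\}\,du$, to which the second inequality applies again.

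Summing the two contributions yields a bound of the form $C t^{-\gamma} \cdot \Phi(\delta, t) + C t^{-\delta} \cdot \Phi(\gamma, t)$, with $\Phi(\alpha, t) \in \{1, \ln t, t^{1-\alpha}\}$ according to whether $\alpha > 1$, $\alpha = 1$, or $\alpha < 1$. Reading off the exponents gives exactly $\nu = \min\{\gamma, \delta, \gamma + \delta - 1\}$ in the generic case where neither $\gamma$ nor $\delta$ equals $1$, and the logarithmic corrections in the borderline cases listed in the statement. The main obstacle is just the bookkeeping across these borderline cases — in particular checking that when $\gamma = 1$ and $\delta > 1$ (or symmetrically) the logarithms cancel against the faster decay of the companion term, so that the final bound is $\min\{1, t^{-1}\}$ without a $\ln t$ factor.
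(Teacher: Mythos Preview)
Your argument is correct and is the standard one. Note, however, that the paper does not actually prove this lemma: it is merely quoted from \cite{BiHaNa} (``let us recall an useful lemma''), so there is no proof in the paper to compare against. Your splitting at $t/2$ for the first and third inequalities, combined with the elementary computation of $\int_1^t s^{-\delta}\,ds$ for the second, is exactly how this result is established in the literature, and your handling of the borderline cases $\gamma=1$ or $\delta=1$ (including the observation that the logarithm is absorbed when the companion exponent exceeds $1$) is accurate. Your remark that the right-hand side of the third inequality should read $\min\{1,t^{-\delta}\}$ rather than $\min\{1,s^{-\delta}\}$ is also correct; this is a typo in the statement.
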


\subsubsection{Decay Estimates for the Chemoattractant}
We can collect the estimate referred to the function $\phi$ in the following proposition.

\begin{proposition}\label{phi_estimates}
Let $(u,v,\phi)$ be the solution of system \eqref{sistema_H}-\eqref{dati_cauchy}-\eqref{Ass_Dati_2}, under the assumptions $(H_b)$, $(H_f)$, $(H_g)$, $(H_h)$. Let $K,T>0$ such that for $t\in (0,T)$, $\|u(t)\|_{L^\infty}$, $\|\phi(t)\|_{W^{1,\infty}}\leq K$. Then for $t\in (0,T)$,  
\begin{equation}\label{Prop_Nphix}
\begin{array}{lcl}
N_{D_x^1 \phi}^{\frac{n}{2}}(t)&\leq& C\left(\|D_x\phi_0\|_{L^\infty}+(1+F_kK)N_u^{\frac{n}{2}}(t)+F_kKN_\phi^{\frac{n}{2}}(t)\right),\\\nonumber\\
M_{D_x^{s+1}\phi}^{\tilde{\delta}}(t)&\leq& C\left(\|D_x^{s+1}\phi_0\|_{L^2}+(1+C_k) M_{D_x^s u}^{\tilde{\delta}}(t)+C_k M_{D_x^s\phi}^{\tilde{\delta}}(t)\right),
\end{array}
\end{equation}
where $\tilde{\delta}=\min \left\{\frac{n}{4}+\frac{1}{2}+\frac{s}{2},\frac{n}{2}\right\}$, and the constant $C_k$ depends on $K$ and $C_f'$.
Moreover, if $K$ is sufficiently small, then we have
\begin{equation}\label{Prop_Nphi}
\begin{array}{lcl}
N_\phi^{\frac{n}{2}}(t)&\leq& C\left(\|\phi_0\|_{L^\infty}+(1+F_kK)N_u^{\frac{n}{2}}(t)\right),\\\\
M_\phi^{\frac{n}{4}}(t)&\leq& C\left(\|\phi_0\|_{L^2}+(1+F_kK)M_{u}^{\frac{n}{4}}(t)\right),\\\nonumber\\
M_{D_x^{s}\phi}^{\tilde{\delta}}(t)&\leq& C\left(\|D_x^{s}\phi_0\|_{L^2}+(1+C_k) M_{D_x^s u}^{\tilde{\delta}}(t)\right)\nonumber.
\end{array}
\end{equation}
\end{proposition}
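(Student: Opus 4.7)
The plan is to apply Duhamel's formula to the parabolic equation of \eqref{sistema_H},
\begin{equation*}
\phi(x,t) = e^{-bt}\Gamma^p(t) * \phi_0(x) + \int_0^t e^{-b(t-s)}\Gamma^p(t-s) * \bigl[a u(s) + \bar f(u,\phi)(s)\bigr]\,ds,
\end{equation*}
and to extract each displayed bound by applying the appropriate spatial derivative, measuring in $L^p$ with $p=2$ or $p=\infty$, and tracking the time weight. The decisive structural feature is the exponential factor $e^{-b(t-s)}$ coming from the $-b\phi$ term in \eqref{sistema_H}, which, together with the standard heat-kernel bounds
\begin{equation*}
\|D^\beta_x \Gamma^p(\tau) * g\|_{L^p} \leq C\,\tau^{-\frac{n}{2}(1/q - 1/p) - |\beta|/2}\|g\|_{L^q},
\end{equation*}
concentrates every time convolution near $s=t$ and returns the pointwise-in-time decay rate of the integrand, in the spirit of Lemma \ref{Lemma5.2}.

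For each of the five bounds the initial-data contribution $e^{-bt}D^\beta_x \Gamma^p(t)*\phi_0$ is controlled by $C\|D^\beta_x\phi_0\|_{L^p}$, since $\max\{1,t^{\delta}\}e^{-bt}$ is bounded for every $\delta\geq 0$. For the Duhamel integral I distribute the derivative between the kernel and the source according to the target norm: for $N_{D_x\phi}^{n/2}$ I place the derivative on the kernel and use $\|D_x\Gamma^p(\tau)*g\|_{L^\infty} \leq C\tau^{-1/2}\|g\|_{L^\infty}$; for $M_{D_x^{s+1}\phi}^{\tilde\delta}$ I factor $D_x^{s+1}\Gamma^p = D_x\Gamma^p * D_x^s$, producing $(t-s)^{-1/2}\|D_x^s g(s)\|_{L^2}$. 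The source $g = au+\bar f(u,\phi)$ is controlled through hypothesis $(H_f)$: the linear piece $au$ is measured directly by the relevant $N$ or $M$ functional of $u$; the quadratic piece satisfies
\begin{equation*}
\|\bar f(u,\phi)(s)\|_{L^p} \leq F_k K\bigl(\|u(s)\|_{L^p} + \|\phi(s)\|_{L^p}\bigr),\quad p=2,\infty,
\end{equation*}
thanks to the uniform bound $\|u\|_{L^\infty},\|\phi\|_{L^\infty}\leq K$, while its $H^s$ norm is controlled by the Moser-type estimate already produced in the proof of Lemma \ref{boundinf} (whence the factors $C_k$). The resulting time integrals are of the form $\int_0^t e^{-b(t-s)}(t-s)^{-\sigma}\min\{1,s^{-\delta}\}\,ds$ and are bounded by $C\min\{1,t^{-\delta}\}$ via the split $[0,t/2]\cup[t/2,t]$: on $[t/2,t]$ one factors $s^{-\delta}\sim t^{-\delta}$ outside, leaving the finite $\int_0^\infty e^{-b\tau}\tau^{-\sigma}d\tau$, while on $[0,t/2]$ the factor $e^{-bt/2}$ contributes super-polynomial decay. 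Multiplying by $\max\{1,t^\delta\}$ and taking $\sup_{(0,t)}$ yields \eqref{Prop_Nphix}.

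The second display \eqref{Prop_Nphi} follows from the same template by absorption. In each of the three bounds, the nonlinear contribution from $\bar f(u,\phi)$ leaves a self-referential summand $CF_kK\,N_\phi^{n/2}(t)$ (respectively $CC_k\,M_{D_x^s\phi}^{\tilde\delta}(t)$) on the right-hand side. Choosing $K$ small enough that each of the prefactors $CF_kK$ and $CC_k$ is $\leq \tfrac12$ allows all such terms to be moved to the left-hand side, leaving the stated inequalities.

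The main obstacle is the bookkeeping of derivative counts and $L^p$ exponents consistent with $\tilde\delta = \min\{n/4 + 1/2 + s/2,\ n/2\}$: the cap $n/2$ must be honored by every source contribution in the low-dimensional cases, which requires combining several of the heat-kernel inequalities in parallel rather than relying on a single splitting. Care is also needed to ensure that the same $K$ can be chosen uniformly across the three bounds in the second display.
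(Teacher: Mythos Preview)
Your proposal is correct and follows essentially the same route as the paper: Duhamel's formula for the damped heat equation, distribution of derivatives between the kernel and the source (all on the source for the $s$-th order $L^2$ bound, one on the kernel for $D_x^1\phi$ in $L^\infty$ and for the $(s+1)$-th order $L^2$ bound), control of $\bar f$ via $(H_f)$ and Proposition~\ref{Prop_h}, Lemma~\ref{Lemma5.2} for the time integrals, and absorption of the self-referential $\phi$-terms when $K$ is small. Your closing worry about needing ``several heat-kernel inequalities in parallel'' to honor the cap $n/2$ in $\tilde\delta$ is unnecessary: the paper simply works with the fixed exponent $\tilde\delta$ throughout, and since $M_{D_x^s u}^{\tilde\delta}$ is by definition the functional at that rate, a single application of the exponential-weight estimate in Lemma~\ref{Lemma5.2} suffices in every dimension.
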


\begin{proof}
Fix $K>0$ large enough and let $T>1$. Take a solution to system \eqref{sistema_H} such that $\|u\|_{L^\infty(\mathbb{R}^n\times(0,T))}$, $\|\phi\|_{W^{1,\infty}(\mathbb{R}^n\times(0,T))}\leq \frac{K}{2}$, this is possible provided that the initial data are suitably small.
Thanks to the Duhamel's formula it is possible to write the function $\phi$ as
\begin{equation}\label{solphi2}
\phi(x,t)=(e^{-bt}\Gamma^p(t)\ast \phi_0)(x)+\int_0^t e^{-b(t-s)}\Gamma^p(t-s)\ast (\alpha u(s)+\bar{f}(u,\phi)(s)) ds.
\end{equation}
Now we proceed in estimating the function in the different norms. Let us start with the $L^\infty$-norm
\paragraph{$L^\infty$-estimate for $\phi$\\}
By the previous equations, we have
\begin{eqnarray*}
\|\phi(t)\|_{L^\infty}&\leq& e^{-bt}\|\Gamma^p(t)\ast\phi_0\|_{L^\infty}+\int_0^t e^{-b(t-s)}\|\Gamma^p(t-s)\ast (\alpha u(s)+\bar{f}(u,\phi)(s))\|_{L^\infty} ds\\
&\leq&  Ce^{-bt}\|\phi_0\|_{L^\infty}+ \int_0^t Ce^{-b(t-s)} ((\alpha+KF_k)\|u(s)\|_{L^\infty}+KF_k\|\phi(s)\|_{L^\infty})ds.
\end{eqnarray*}
Let us multiply by $\min \{1, s^{-\frac{n}{2}}\}\max\{1, s^{\frac{n}{2}}\}=1$, which yields,
\begin{eqnarray*}
\|\phi(t)\|_{L^\infty}&\leq& C \left(e^{-bt}\|\phi_0\|_{L^\infty}+(1+KF_k) N_u^{\frac{n}{2}}(t)\int_0^t e^{-b(t-s)} \min\{1,s^{-\frac{n}{2}}\}ds\right.\\
&+& \left.K F_k N_\phi^{\frac{n}{2}}(t) \int_0^t e^{-b(t-s)}\min\{1,s^{-\frac{n}{2}}\}ds\right).
\end{eqnarray*}
Thanks to Lemma \ref{Lemma5.2}, we easily deduce
\begin{equation}\label{phiLinf2}
\|\phi(t)\|_{L^\infty}\leq C\left(e^{-bt}\|\phi_0\|_{L^\infty}+ (1+F_kL)\min\{1,t^{-\frac{n}{2}}\}N_u^{\frac{n}{2}}(t)+F_kL \min\{1,t^{-\frac{n}{2}}\}N_\phi^{\frac{n}{2}}(t)\right).
\end{equation}

\paragraph{$L^\infty$-estimate for $D_x^1\phi$\\}
Proceeding in a similar way, we get
\begin{eqnarray*}
\|D_x^1\phi(t)\|_{L^\infty}&\leq& \|D_x^1(e^{-bt}\Gamma^p(t)\ast\phi_0)\|_{L^\infty}+
\int_0^t \|D_x^1(e^{-b(t-s)}\Gamma^p(t-s)\ast(\alpha u(s)+\bar{f}(u,\phi)(s))\|_{L^\infty} ds\\
&\leq& Ce^{-bt}\|D_x^1\phi_0\|_{L^\infty}+\int_0^t C e^{-b(t-s)}(t-s)^{-\frac{1}{2}} ((\alpha+KF_k)\|u(s)\|_{L^\infty}+ KF_k\|\phi(s)\|_{L^\infty})ds\\
&\leq& C \left[ e^{-bt}\|D_x^1\phi_0\|_{L^\infty}+(1+KF_k)N_u^{\frac{n}{2}}(t)\int_0^t e^{-b(t-s)}(t-s)^{-\frac{1}{2}}\min\{1,s^{-\frac{n}{2}}\}ds\right.  \\
&+& \left.KF_k N_\phi^{\frac{n}{2}}(t)\int_0^t e^{-b(t-s)}(t-s)^{-\frac{1}{2}}\min\{1,s^{-\frac{n}{2}}\}ds \right].
\end{eqnarray*}

Thus the estimates of the $L^\infty$-norm of the first derivative is given by
\begin{equation}\label{phixLinf}
\|D_x^1\phi(t)\|_{L^\infty}\leq C \left(e^{-bt}\|D_x^1\phi_0\|_{L^\infty}+ (1+KF_k)N_{u}^{\frac{n}{2}}(t)\min\{ 1,|t-1|^{-\frac{n}{2}}\}+KF_k N_{\phi}^{\frac{n}{2}}(t)\min\{ 1,|t-1|^{-\frac{n}{2}}\}\right). 
\end{equation}

From the last inequality and \eqref{phiLinf2} follows that the functionals
$N_\phi^{\frac{n}{2}}$ and $N_{D_x^1 \phi}^{\frac{n}{2}}$, can be estimated as
\begin{eqnarray*}
N_\phi^{\frac{n}{2}}(t)\leq  C\left(\|\phi_0\|_{L^\infty}+(1+F_kK)N_u^{\frac{n}{2}}(t)+F_kKN_\phi^{\frac{n}{2}}(t)\right).
\end{eqnarray*}
\begin{equation}\label{Nphix}
N_{D_x^1\phi}^{\frac{n}{2}}(t)\leq C\left(\|D_x\phi_0\|_{L^\infty}+(1+F_kK)N_u^{\frac{n}{2}}(t)+F_kKN_\phi^{\frac{n}{2}}(t)\right).
\end{equation}

Moreover, if $K$ is sufficiently small, then we have:
\begin{equation}\label{N_phi_small}
N_\phi^{\frac{n}{2}}(t)\leq C\left(\|\phi_0\|_{L^\infty}+(1+F_kK)N_u^{\frac{n}{2}}(t)\right).
\end{equation}

\paragraph{$L^2$-estimate for $\phi$\\}
We estimate now the function $\phi$ and its derivatives in the $L^2$-norm.
Let us start from the $L^2$ estimate for $\phi$.

By the Duhamel's formula \eqref{solphi2}, follows
\begin{eqnarray*}
\|\phi(t)\|_{L^2}&\leq&\|e^{-bt}\Gamma^p(t)\ast\phi_0\|_{L^2}+\int_0^t \|e^{-b(t-s)}\Gamma^p(t-s)\ast (\alpha u(s)+\bar{f}(u,\phi)(s))\|_{L^2} ds\\
&\leq& C(e^{-bt}\|\phi_0\|_{L^2}+\int_0^t e^{-b(t-s)}(\|u(s)\|_{L^2}+F_kK(\|u(s)\|_{L^2}+\|\phi(s)\|_{L^2})) ds\\
&\leq& C(e^{-bt}\|\phi_0\|_{L^2}+(1+F_kK)M_u^{\frac{n}{4}}(t)\int_0^t  e^{-b(t-s)}\min\{1,s^{-\frac{n}{4}}\}ds \\
&+&F_kKM_\phi^{\frac{n}{4}}(t)\int_0^t  e^{-b(t-s)}\min\{1,s^{-\frac{n}{4}}\}ds).
\end{eqnarray*}
Proceeding as done before, by Lemma \ref{Lemma5.2} we obtain
 \begin{eqnarray}\label{phiL2}
&&\|\phi(t)\|_{L^2}\leq C\left(e^{-bt}\|\phi_0\|_{L^2}+ (1+F_kK) M_u^{\frac{n}{4}}(t)\min\{1,t^{-\frac{n}{4}}\}+F_kK M_{\phi}^{\frac{n}{4}}(t)\min\{1,t^{-\frac{n}{4}}\}\right).
\end{eqnarray}
Then, for the related functional, the following estimate yields
$$
M_{\phi}^{\frac{n}{4}}(t)\leq C\left(\|\phi_0\|_{L^2}+(1+F_kK)M_{u}^{\frac{n}{4}}(t)+F_kKM_{\phi}^{\frac{n}{4}}(t)\right).
$$

Also in this case, if $K$ is sufficiently small, then
\begin{eqnarray}\label{M_phi}
M_{\phi}^{\frac{n}{4}}(t)&\leq& C\left(\|\phi_0\|_{L^2}+(1+F_kK)M_{u}^{\frac{n}{4}}(t)\right)\label{M_phi_s}.
\end{eqnarray}  

\paragraph{$L^2$-estimate for $D_x^{s}\phi$\\}
Let us proceed estimating the $L^2-$norm of the $s$-derivative the function $\phi$. By the Duhamel's formula, we obtain
in a similar way 
\begin{eqnarray*}
\|D_x^{s}\phi(t)\|_{L^2}&\leq& e^{-bt}\|D_x^{s}\Gamma^p(t)\ast\phi_0\|_{L^2}+\int_0^t e^{-b(t-s)}\|D_x^{s}\Gamma^p(t-s)\ast (\alpha u(s)+\bar{f}(u,\phi)(s))\|_{L^2} ds\\
&\leq& Ce^{-bt}\|D_x^{s}\phi_0\|_{L^2}+\int_0^t Ce^{-b(t-s)}(\|D_x^{s}u(s)\|_{L^2}+C_{\bar{f}'} K^{s-1}(\|D_x^s\phi(s)\|_{L^2}+\|D_x^su(s)\|_{L^2})).
\end{eqnarray*}
Using Lemma \ref{Lemma5.2} we deduce:
\begin{eqnarray*}
\|D_x^{s}\phi(t)\|_{L^2}&\leq& C\left(e^{-bt}\|D_x^{s}\phi_0\|_{L^2}+ 
(1+2C_{f'}K^{s-1}) M_{D_x^s u}^{\tilde{\delta}}(t)\min\{1,t^{-\tilde{\delta}}\}\right.\\
&+&\left.2C_{f'}K^{s-1} M_{D_x^s\phi}^{\tilde{\delta}}(t)\min\{1,t^{-\tilde{\delta}}\}\right),
\end{eqnarray*}
where $\tilde{\delta}=\min \left\{\frac{n}{4}+\frac{1}{2}+\frac{s}{2},\frac{n}{2}\right\}$. Then, for the related functional we have
$$
M_{D_x^{s}\phi}^{\tilde{\delta}}(t)\leq C\left(\|D_x^{s}\phi_0\|_{L^2}+(1+C_k) M_{D_x^s u}^{\tilde{\delta}}(t)+C_k M_{D_x^s\phi}^{\tilde{\delta}}(t)\right)
$$
and, if $K$ is sufficiently small, then
\begin{equation}\label{M_phiL2}
M_{D_x^{s}\phi}^{\tilde{\delta}}(t)\leq C\left(\|D_x^{s}\phi_0\|_{L^2}+(1+C_k) M_{D_x^s u}^{\tilde{\delta}}(t)\right).
\end{equation}

\paragraph{$L^2$-estimate for $D_x^{s+1}\phi$\\ }
Finally we estimate the $L^2$-norm of the $s+1$-derivative of $\phi$.
As done before
\begin{eqnarray*}
\|D_x^{s+1}\phi(t)\|_{L^2}&\leq& e^{-bt}\|D_x^{s+1}\Gamma^p(t)\ast\phi_0\|_{L^2}+\int_0^t e^{-b(t-s)}\|D_x^{s+1}\Gamma^p(t-s)\ast (\alpha u(s)+\bar{f}(u,\phi)(s))\|_{L^2} ds\\
&\leq& Ce^{-bt}\|D_x^{s+1}\phi_0\|_{L^2}+\int_0^t Ce^{-b(t-s)}\|D_x^1\Gamma^p(t-s)\|_{L^1}\|D_x^s(\alpha u(s)+\bar{f}(u,\phi)(s))\|_{L^2}ds\\
&\leq& Ce^{-bt}\|D_x^{s+1}\phi_0\|_{L^2}+\int_0^t Ce^{-b(t-s)} (t-s)^{-\frac{1}{2}}\|D_x^{s}u(s)\|_{L^2}\\
&+&\int_0^t 2CC_{f'}K^{s-1}e^{-b(t-s)} (t-s)^{-\frac{1}{2}}(\|D_x^{s}\phi(s)\|_{L^2}+\|D_x^{s}u(s)\|_{L^2}) ds.
\end{eqnarray*}

Thanks to Lemma \ref{Lemma5.2} we deduce,
\begin{eqnarray}\label{D_s_phiL2}
\|D_x^{s+1}\phi(t)\|_{L^2}&\leq& C\left(e^{-bt}\|D_x^{s+1}\phi_0\|_{L^2}+ 
(1+K) M_{D_x^s u}^{\tilde{\delta}}(t)\min\{1,|t-1|^{-\frac{n}{4}}\}\right.\\
&+&\left.K M_{D_x^s\phi}^{\tilde{\delta}}(t)\min\{1,|t-1|^{-\tilde{\delta}}\}\right)\nonumber,
\end{eqnarray}
where $\tilde{\delta}=\min \left\{\frac{n}{4}+\frac{1}{2}+\frac{s}{2},\frac{n}{2}\right\}$.
Then, for the functional we get
\begin{equation}\label{M_Ds+phi}
M_{D_x^{s+1}\phi}^{\tilde{\delta}}(t)\leq C\left(\|D_x^{s+1}\phi_0\|_{L^2}+(1+C_k) M_{D_x^s u}^{\tilde{\delta}}(t)+C_k M_{D_x^s\phi}^{\tilde{\delta}}(t)\right).
\end{equation}
\end{proof}

\subsubsection{Decay Estimates for the Conservative and Dissipative Variables}
Now we can prove the existence of global solutions to system \eqref{sistema_H} for suitably small initial data.
\begin{theorem}\label{Theo_Glo_GUMA}
Under the assumptions  $(H_b)$, $(H_f)$,$(H_g)$, and $(H_h)$ there exists an $\epsilon_0>0$ such that, if 
$$
\|u_0\|_{H^s}, \|u_0\|_{L^1}, \|v_0\|_{H^s}, \|v_0\|_{L^1},\|\phi_0\|_{H^{s+1}},\|\phi_0\|_{W^{1,\infty}} \leq \epsilon_0,
$$
then there exists a unique global solution to the Cauchy problem \eqref{sistema_H}-\eqref{dati_cauchy}:
\begin{align*}
u \in C([0,\infty), H^s(\mathbb{R}^n)),\; v \in C([0,\infty), H^s(\mathbb{R}^n)),\;  \phi \in C([0,\infty), H^{s+1}(\mathbb{R}^n)), \quad \textrm{for }  s\geq \left[\frac{n}{2}\right]+1.
\end{align*}
Moreover for the solution $(u,v,\phi)$ the following decay rates  are satisfied 
\begin{equation*}
\begin{array}{llll}
 \|u(t)\|_{L^\infty}\sim t^{-\frac{n}{2}},& \|u(t)\|_{L^2}\sim t^{-\frac{n}{4}}, & \|D_x^k u(t)\|_{L^2}\sim t^{-\delta_k}, &\textrm{ for } k=0,\ldots,s; \\ \\
\|v(t)\|_{L^\infty}\sim t^{-\frac{n}{2}}, & \|v(t)\|_{L^2}\sim t^{-\nu_{0}}, & \|D_x^k v(t)\|_{L^2}\sim t^{-\nu_k}, & \textrm{ for } k=0,\ldots,s;  \\ \\
 \|\phi(t)\|_{L^\infty}\sim t^{-\frac{n}{2}}, &  \|D_x^1 \phi(t)\|_{L^\infty}\sim t^{-\frac{n}{2}},, & & \\\\
 \|\phi(t)\|_{L^2}\sim t^{-\frac{n}{4}}, & \|D_x^{k+1}\phi(t)\|_{L^2}\sim t^{-\delta_k}, & \textrm{ for } k=0,\ldots,s; &\\
\end{array}
\end{equation*}
where $\delta_k=\min\left\{\frac{n}{4}+\frac{1}{2}+\frac{1}{2}\left[\frac{k+1}{2}\right],\frac{n}{4}+\delta_r\right\}$, with $r=\left[\frac{k}{2}\right]$, $\nu_{0}=\min\left\{\frac{n}{2},\frac{n}{4}+\frac{1}{2}\right\}$, and\\
$\nu_k=\min\left\{\frac{n}{4}+1+\frac{1}{2}\left[\frac{k+1}{2}\right],\frac{n}{4}+\delta_r\right\}$, with $r=\left[\frac{k}{2}\right]$.\\
\end{theorem}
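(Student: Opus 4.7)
The strategy is to close a bootstrap argument on the decay functionals $N^\delta$ and $M^\delta$, combining the estimates for $\phi$ already obtained in Proposition \ref{phi_estimates} with analogous estimates for $(u,v)$ obtained from the refined Green kernel decomposition of Theorem \ref{Theo_BHN}. Once the $L^\infty$ bound $\|u(t)\|_{L^\infty}, \|\phi(t)\|_{W^{1,\infty}}\leq K$ is propagated globally with small constants, Lemma \ref{boundinf} gives an a priori $H^s\times H^{s+1}$ bound and the Continuation Principle \ref{continuation} yields a global smooth solution.

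First, I would write $(u,v)$ via Duhamel's formula against $\Gamma^h=K(t)+\mathcal{K}(t)$ where the forcing is $\bar{B}(\phi,\nabla\phi)v+H(\phi,\nabla\phi,w)$. Since the semilinear system \eqref{sistema_H_com} is in conservative-dissipative form with $u$ the conservative variable and $v$ the dissipative one, Theorem \ref{Theo_BHN} provides the following sharp rates: $L_0 K(t)$ applied to a conservative source loses nothing ($t^{-n/2(1-1/p)}$), applied to a dissipative source gains an extra $t^{-1/2}$; $L_- K(t)$ gains $t^{-1/2}$ in both cases. The term $\mathcal K$ contributes only exponentially decaying pieces. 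Since the forcing of the hyperbolic block lies entirely in the dissipative slot (the first component is zero), one gets, roughly,
\begin{equation*}
\|u(t)\|_{L^p}\lesssim \min\{1,t^{-\frac{n}{2}(1-\frac{1}{p})}\}\|w_0\|_{L^1\cap L^2}+\int_0^t \min\{1,(t-s)^{-\frac{n}{2}(1-\frac{1}{p})-\frac{1}{2}}\}\|(\bar{B}v+Hg)(s)\|_{L^1\cap L^2}\,ds,
\end{equation*}
and a similar bound for $v$ but with one extra $\frac12$-power of decay from $L_-$. Splitting each nonlinear factor via Hölder into $\|\cdot\|_{L^\infty}\|\cdot\|_{L^2}$ and using assumptions $(H_b),(H_g),(H_h)$ (which give linear vanishing at zero), every integrand becomes a product of a $K$-type decay factor and a functional $N^{n/2}_\cdot$ or $M^{n/4}_\cdot$. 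Then Lemma \ref{Lemma5.2} converts the time convolutions into the prescribed rates $\delta_k,\nu_k$.

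Doing this for $k=0,1,\ldots,s$ derivatives yields a closed system of inequalities of the schematic form
\begin{align*}
N^{n/2}_u+M^{n/4}_u+\sum_{k} M^{\delta_k}_{D^k_x u}+N^{n/2}_v+\sum_k M^{\nu_k}_{D^k_x v}\;&\leq\; C\,\EE_0+C\,K\,\bigl(\text{same sum}+\Phi\bigr),\\
\Phi:=N^{n/2}_\phi+N^{n/2}_{D_x\phi}+M^{n/4}_\phi+\sum_k M^{\tilde\delta}_{D^{k+1}_x\phi}\;&\leq\;C\,\EE_0+C(1+K)\,\bigl(\text{$u$-functionals}\bigr),
\end{align*}
where $\EE_0$ denotes the norm of the initial data that appears in the hypothesis, and the second inequality is exactly Proposition \ref{phi_estimates}. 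Choosing $K$ (and hence $\epsilon_0$) small enough, the $K$-prefactor in front of the unknown functionals becomes $<1/2$, so an absorption argument bounds the whole sum by $C\EE_0$. By Sobolev embedding $H^s\hookrightarrow W^{1,\infty}$ for $s\geq[n/2]+1$, this uniform bound on $N^{n/2}_\cdot$ combined with the $H^s$ control via Lemma \ref{boundinf} propagates the $K/2$ assumption, closing the bootstrap.

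The main obstacle is the bookkeeping of the decay exponents $\delta_k,\nu_k,\tilde\delta$ to ensure each convolution handled by Lemma \ref{Lemma5.2} produces exactly the exponent that the left-hand functional requires, with no logarithmic loss outside of endpoint cases. The critical couplings are the term $\bar b(\phi,\nabla\phi)v$ and $h(\phi,\nabla\phi)g(u)$, whose $H^s$ norms must be controlled via the Moser-type Propositions \ref{Propo1}--\ref{Prop_h}; here one must exploit the fact that one of the two factors is already bounded in $L^\infty$ by the running bootstrap hypothesis so that only its $L^\infty$ decay (rate $n/2$) is used, while the other factor carries the $L^2$ decay. Verifying that these pairings are always compatible with $\gamma,\delta\geq 0$ in Lemma \ref{Lemma5.2} and that the minimum in $\nu:=\min\{\gamma,\delta,\gamma+\delta-1\}$ gives precisely the announced $\delta_k,\nu_k$ is the only delicate calculation; otherwise the argument is a direct bootstrap that simultaneously proves global existence and the asymptotic decay rates listed in the theorem.
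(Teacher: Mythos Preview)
Your overall strategy coincides with the paper's: Duhamel against $\Gamma^h=K+\mathcal K$, exploitation of the fact that the forcing lies entirely in the dissipative slot so that Theorem \ref{Theo_BHN} yields an extra $t^{-1/2}$, time-convolution via Lemma \ref{Lemma5.2}, combination with Proposition \ref{phi_estimates}, and closure through Lemma \ref{boundinf} and the Continuation Principle. One remark on the closure: in the paper the functional inequality is genuinely \emph{quadratic}, of the form $C_kP(t)^2-(1-C_{k0})P(t)+C_0\ge 0$ with $P$ the sum of all $M^\cdot,N^\cdot$ functionals, because each nonlinear integrand is bounded by a \emph{product} of two decay functionals (e.g.\ $M_\phi^{n/4}M_u^{n/4}$ or $N_\phi^{n/2}M_v^{\nu_0}$) rather than by $K$ times a single functional. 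Your linear-in-$P$ schematic with a $K$-prefactor would throw away the decay of one factor, and for several of the couplings this is not enough to recover the target exponent after Lemma \ref{Lemma5.2}; the paper keeps both decays and closes via the small-root branch of the quadratic.

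There is also a genuine gap concerning the higher-derivative rates $\delta_k,\nu_k$. If one places all $k$ derivatives on $K(t-s)$ and estimates the source in $L^1$ as you indicate, the resulting rate is only $\tilde\delta=\min\{\frac n4+\frac12+\frac k2,\,\frac n2\}$, which is capped at $n/2$ independently of $k$ and is strictly weaker than the announced $\delta_k$ for small $n$. The paper obtains the recursive formula $\delta_k=\min\{\frac n4+\frac12+\frac12[\frac{k+1}{2}],\,\frac n4+\delta_{[k/2]}\}$ by an additional inductive step carried out after the bootstrap has closed: at order $k$ one splits $D_x^kK=(D_x^{[(k+1)/2]}K)\,D_x^{[k/2]}$, keeps $[(k+1)/2]$ derivatives on the kernel, puts the remaining $[k/2]$ on the source, and feeds in the already-established rate $\delta_{[k/2]}$ for that lower-order derivative. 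This derivative-splitting induction is what produces the recursion in the statement and is essential for the decay table of $D_x^k u$, $D_x^k v$, $D_x^{k+1}\phi$; it is more than bookkeeping and should be made explicit.
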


\begin{rmk}
We have defined the decay rates of the $s$-order derivative as
\begin{equation}\label{delta}
\delta_s=\min \left\{\frac{n}{4}+\frac{1}{2}+\frac{1}{2}\left[\frac{s+1}{2}\right],\frac{n}{4}+\delta_r\right\}, \quad \textrm{ for } s\geq1, 
\end{equation}
where $r=\left[\frac{s}{2}\right]$. Here we write the explicit form for the lower orders.\\
Set $\delta_0=\frac{n}{4}$. Let be $s=1$, then, by the relation \eqref{delta}, we have $\delta_1=\min \left\{\frac{n}{4}+1,\frac{n}{2}\right\}$. \\
If $s=2$, then $\delta_2=\min \left\{\frac{n}{4}+1,\frac{n}{4}+\delta_1\right\}$. When $s=3$, we get $\delta_3=\min \left\{\frac{n}{4}+\frac{3}{2},\frac{n}{4}+\delta_1\right\}$ and so on.\\

\end{rmk}

\begin{proof}
In order to prove our global existence result, we need to estimate the $H^s$ and $L^\infty$-norm of the solution $(u,v)$ to system \eqref{C-D_uv}.

By the Duhamel's formula that solution $w$ to system \eqref{sistema_H} can be written as
\begin{eqnarray}
w(x,t)= (\Gamma^h(t)\ast w_0)(x)+ \int_0^t \Gamma^h(t-s)\ast (\bar{B}(\phi,\nabla\phi)(s)w(s)+H(\phi,\nabla\phi,w)(s))ds.
\end{eqnarray}
where the function $\Gamma^h(\cdot)$ is the Green function of the dissipative hyperbolic system \eqref{sistema_Hyp}.\\ 
Thus for the first component of $w$, the conservative variable $u$, we have:
\begin{eqnarray}\label{comp_u}
u(x,t)=(\Gamma^h_{1}(t)\ast w_0)(x)+ \int_0^t \Gamma^h_{1}(t-s)\ast (\bar{B}(\phi,\nabla\phi)w(s)+H(\phi,\nabla\phi,w)(s))ds,
\end{eqnarray}
where $\Gamma^h_{1}$ is the first row of the $(n+1)\times (n+1)$ Kernel $\Gamma^h$.

Regarding the generic dissipative component $v_j$, for $j=1,\ldots,n$, we have
\begin{eqnarray}\label{comp_v}
v_j(x,t)=(\Gamma^h_{j+1}(t)\ast w_0)(x)+ \int_0^t \Gamma^h_{j+1}(t-s)\ast (\bar{B}(\phi(s),\nabla\phi(s))w+H(\phi,\nabla\phi,w)(s))ds,
\end{eqnarray}
where  $\Gamma^h_{j+1}$ is the $(j+1)$-th row of $\Gamma^h$. 

We take into account the expression of $\Gamma^h$ and its decay rates, presented in Section \ref{MultiGreen}, in order to obtain decay estimates of the conservative and dissipative variables.

\paragraph{$L^2$-estimate for $u$\\}

We will start our analysis by the $L^2$ estimate for the function $u$.
By equation \eqref{comp_u} follows
\begin{equation}\label{uL2}
\begin{array}{l}
\|u(t)\|_{L^2}\leq \|\Gamma^h_{1}(t)\ast w_0\|_{L^2}+\displaystyle\int_0^t \|\Gamma^h_{1}(t-s)\ast (\bar{B}(\phi(s),\nabla\phi(s))w+H(\phi,\nabla\phi,w))\|_{L^2} ds.
\end{array}
\end{equation}
In Section \ref{MultiGreen} we observed that, it is possible to decompose the Green Kernel, then
\begin{equation*}
\|\Gamma^h_{1}(t)\ast w_0\|_{L^2}\leq  \|K_{1,1}(t) u_0\|_{L^2}+\sum_{i=1}^n\|K_{1,i+1}(t) v_0^i\|_{L^2}+\|\mathcal{K}_{1,1}(t) u_0\|_{L^2}+\sum_{i=1}^n\|\mathcal{K}_{1,i+1}(t) v_0^i\|_{L^2}.
\end{equation*}
By Theorem \ref{Theo_BHN} we deduce
\begin{equation} \nonumber
\begin{array}{lll}
\|\mathcal{K}_{1,1}(t) u_0\|_{L^2}\leq Ce^{-ct}\|u_0\|_{L^2} &\quad & \|K_{1,1}(t) u_0\|_{L^2}\leq C\min\{1,t^{-\frac{n}{4}}\}\|u_0\|_{L^1},\\ & \\
\|\mathcal{K}_{1,i+1}(t) v_0^i\|_{L^2}\leq Ce^{-ct}\|v_0^i\|_{L^2},&\quad &\|K_{1,i+1}(t) v_0^i\|_{L^2}\leq C\min\{1,t^{-\frac{n}{4}-\frac{1}{2}}\}\|v_0^i\|_{L^1}.
\end{array}
\end{equation}

Moreover we can decompose the integral term in \eqref{uL2} as 
\begin{align*}
\int_0^t \| & \Gamma^h_{1}(t-s)\ast (\bar{B}(\phi,\nabla\phi)(s)w(s)+H(\phi,\nabla\phi,w)(s))\|_{L^2}ds\\
\leq&\int_0^t \|\mathcal{K}_{1}(t-s) (\bar{B}(\phi,\nabla\phi)(s)w(s)+H(\phi,\nabla\phi,w)(s))\|_{L^2}ds\\
+&\int_0^t \|K_{1}(t-s) (\bar{B}(\phi,\nabla\phi)(s)w(s)+H(\phi,\nabla\phi,w)(s))\|_{L^2}ds.
\end{align*}
Let us start estimating the first integral.
\begin{align*}
\int_0^t \| & \mathcal{K}_{1}(t-s) (\bar{B}(\phi,\nabla\phi)(s)w(s)+H(\phi,\nabla\phi,w)(s))\|_{L^2}\\
\leq & \int_0^t Ce^{-c(t-s)}(\| \bar{B}(\phi,\nabla\phi)(s)w(s)\|_{L^2}+\|H(\phi,\nabla\phi,w)\|_{L^2}) ds\\
\leq & \int_0^tCe^{-c(t-s)}(B_k(\|\nabla\phi(s)\|_{L^\infty}+\|\phi(s)\|_{L^\infty})\|v(s)\|_{L^2}+H_kG_k(\|\nabla\phi(s)\|_{L^\infty}+\|\phi(s)\|_{L^\infty})\|u(s)\|_{L^2} ds.
\end{align*}
Proceeding as done for the estimates of the function $\phi$, we arrive at
\begin{align*}
\int_0^t \|& \mathcal{K}_{1}(t-s) (\bar{B}(\phi,\nabla\phi)(s)w(s)+H(\phi,\nabla\phi,w)(s))\|_{L^2} \\
\leq&CB_k(N_{D_x^1 \phi}^{\frac{n}{2}}(t)M_{v}^{\nu_{0}}(t)+N_{\phi}^{\frac{n}{2}}(t)M_{v}^{\nu_{0}}(t))\int_0^te^{-c(t-s)}\min\{1,s^{-(\frac{n}{2}+\nu_{0})}\}ds\\
+&CH_kG_k(M_{u}^{\frac{n}{4}}(t)N_{\phi}^{\frac{n}{2}}(t)+M_{u}^{\frac{n}{4}}(t)N_{D_x^1 \phi}^{\frac{n}{2}}(t))\int_0^t e^{-c(t-s)}\min\{1,s^{-\frac{3}{4}n}\}ds,
\end{align*}
where $\nu_{0}=\min\left\{\frac{n}{4}+\frac{1}{2},\frac{n}{2}\right\}$.
Then thanks to Lemma \ref{Lemma5.2} we deduce
\begin{align*}
\int_0^t \| & \mathcal{K}_{1}(t-s) (\bar{B}(\phi,\nabla\phi)(s)w(s)+H(\phi,\nabla\phi,w)(s))\|_{L^2}\leq\\
+&CB_k\min\{1,t^{-(\frac{n}{2}+\nu_{0})}\}(N_{D_x^1 \phi}^{\frac{n}{2}}(t)M_{v}^{\nu_{0}}(t)+N_{\phi}^{\frac{n}{2}}(t)M_{v}^{\nu_{0}}(t))\\
+&CH_kG_k \min\{1,t^{-\frac{3}{4}n}\}(M_{u}^{\frac{n}{4}}(t)N_{\phi}^{\frac{n}{2}}(t)+ M_{u}^{\frac{n}{4}}(t)N_{D_x^1 \phi}^{\frac{n}{2}}(t)).
\end{align*}

To complete our estimate we need to study the contribution of the hyperbolic Green function diffusive part.
\begin{align*}
\int_0^t\|&K_{1}(t-s)(\bar{B}(\phi,\nabla\phi)(s)w(s)+H(\phi,\nabla\phi,w)(s))\|_{L^2}ds\\
\leq& \int_0^t C\min\{1,(t-s)^{-\frac{n}{4}-\frac{1}{2}}\}(\|\bar{b}(\phi,\nabla\phi)(s)v(s)\|_{L^1}+\|h(\phi,\nabla\phi)g(u)(s)\|_{L^1})ds\\
\leq& \int_0^tC\min\{1,(t-s)^{-\frac{n}{4}-\frac{1}{2}}\}(B_k\|\phi(s)\|_{L^2}\|v(s)\|_{L^2}+B_k\|\nabla\phi(s)\|_{L^2}\|v(s)\|_{L^2}ds\\
+&\int_0^tC\min\{1,(t-s)^{-\frac{n}{4}-\frac{1}{2}}\}(H_kG_k\|\phi(s)\|_{L^2}\|u(s)\|_{L^2}+H_KG_k\|\nabla\phi(s)\|_{L^2}\|u(s)\|_{L^2})ds.
\end{align*}
Introducing the functionals $M^\delta$, we arrive at
\begin{align*}
\int_0^t\| & K_{1}(t-s) (\bar{B}(\phi,\nabla\phi)(s)w(s)+H(\phi,\nabla\phi,w)(s))\|_{L^2}ds\\
\leq &  B_K(M_{\phi}^{\frac{n}{4}}(t)M_{v}^{\nu_{0}}(t)+M_{D_x^1 \phi}^{\frac{n}{4}}(t)M_{v}^{\nu_{0}}(t))\int_0^t\min\{1,(t-s)^{-\frac{n}{4}-\frac{1}{2}}\}\min\{1,s^{-(\frac{n}{4}+\nu_{0})}\}ds\\
+& H_kG_k (M_{\phi}^{\frac{n}{4}}(t)M_{u}^{\frac{n}{4}}(t)+M_{D_x^1 \phi}^{\frac{n}{4}}(t)M_{u}^{\frac{n}{4}}(t))\int_0^t\min\{1,(t-s)^{-\frac{n}{4}-\frac{1}{2}}\}\min\{1,s^{-\frac{n}{2}}\}ds,
\end{align*}
and by Lemma \ref{Lemma5.2} we deduce
\begin{align}\label{stimaKs}
\int_0^t \|&K_{1}(t-s) (\bar{B}(\phi,\nabla\phi)w(s)+H(\phi,\nabla\phi,w)(s))\|_{L^2}\nonumber\\
\leq & \min\{1,t^{-\nu_{0}}\}(B_KM_{\phi}^{\frac{n}{4}}(t)M_{v}^{\nu_{0}}(t)+B_KM_{D_x^1 \phi}^{\frac{n}{4}}(t)M_{v}^{\nu_{0}}(t))\\
+& \min\{1,t^{-\theta}\}(H_kG_kM_{u}^{\frac{n}{4}}(t)M_{\phi}^{\frac{n}{4}}(t)+H_kG_kM_{u}^{\frac{n}{4}}(t)M_{D_x^1 \phi}^{\frac{n}{4}}(t))\nonumber.
\end{align}
where $\theta=\frac{1}{4}$ if $n=1$ otherwise $\theta=\frac{n}{4}+\frac{1}{2}$.

Then we obtain the $L^2$-norm of the function $u$ summing the previous inequalities.
\begin{eqnarray}\label{UL2a}
\|u(t)\|_{L^2}&\leq& C\left[e^{-ct}(\|u_0\|_{L^2}+\sum_{i=1}^n\|v_0^i\|_{L^2})+\min\{1,t^{-\frac{n}{4}}\}\|u_0\|_{L^1}+\min\{1,t^{-\frac{n}{4}-\frac{1}{2}}\}\sum_{i=1}^n\|v^i_0\|_{L^1}\right]\nonumber\\
&+&C_k\left[\min\{1,t^{-(\frac{n}{2}+\nu_{0})}\}(N_{D_x^1 \phi}^{\frac{n}{2}}(t)M_{v}^{\nu_{0}}(t)+N_{\phi}^{\frac{n}{2}}(t)M_{v}^{\nu_{0}}(t))\right.\nonumber\\
&+& \min\{1,t^{-\frac{3}{4}n}\}(M_{u}^{\frac{n}{4}}(t)N_{\phi}^{\frac{n}{2}}(t)+ M_{u}^{\frac{n}{4}}(t)N_{D_x^1 \phi}^{\frac{n}{2}}(t))\\
&+&\min\{1,t^{-\nu_{0}}\}(M_{\phi}^{\frac{n}{4}}(t)M_{v}^{\nu_{0}}(t)+M_{D_x^1 \phi}^{\frac{n}{4}}(t)M_{v}^{\nu_{0}}(t))\nonumber\\
&+&\left.\min\{1,t^{-\theta}\}(M_{u}^{\frac{n}{4}}(t)M_{\phi}^{\frac{n}{4}}(t)+M_{u}^{\frac{n}{4}}(t)M_{D_x^1 \phi}^{\frac{n}{4}}(t))\right]\nonumber,
\end{eqnarray}
where the constant $C_k$ depends on $K$.


\paragraph{$L^2$-estimate for $D_x^s u$\\}
The next step is the estimate of  $s$-order derivative of function $u$.  From the Duhamel's formula, it follows that
\begin{equation}\label{DuL2}
\begin{array}{l}
\|D_x^su(t)\|_{L^2}\leq \|D_x^s\Gamma^h_{1}(t)\ast w_0\|_{L^2}+\displaystyle\int_0^t \|D_x^s\Gamma^h_{1}(t-s)\ast (\bar{B}(\phi,\nabla\phi)(s)w(s)+H(\phi,\nabla\phi,w)(s))\|_{L^2} ds.
\end{array}
\end{equation}
Decomposing the Green Kernel, the first term in the previous inequality can be estimated as
\begin{equation*}
\begin{array}{lll}
\|D_x^s \mathcal{K}_{1,1}(t) u_0\|_{L^2}\leq Ce^{-ct}\|D_x^su_0\|_{L^2}, &\quad & \|D_x^sK_{1,1}(t) u_0\|_{L^2}\leq C\min\{1,t^{-\frac{n}{4}-\frac{s}{2}}\}\|u_0\|_{L^1},\\\\
\|D_x^s\mathcal{K}_{1,i+1}(t) v_0^i\|_{L^2}\leq Ce^{-ct}\|D_x^s v_0^i\|_{L^2}, & \quad &\|D_x^sK_{1,i+1}(t) v_0^i\|_{L^2}\leq C\min\{1,t^{-\frac{n}{4}-\frac{1}{2}-\frac{s}{2}}\}\|v_0^i\|_{L^1}.
\end{array}
\end{equation*}
Moreover we can decompose the integral term in
 \eqref{DuL2} as
\begin{align*}
\int_0^t \| &D_x^s\Gamma^h_{1}(t-s)\ast (\bar{B}(\phi,\nabla\phi)w(s)+H(\phi,\nabla\phi,w)(s))\|_{L^2}ds\\
\leq & \int_0^t \|D_x^s \mathcal{K}_{1}(t-s) (\bar{B}(\phi,\nabla\phi)w(s)+H(\phi,\nabla\phi,w)(s))\|_{L^2}ds\\
+&\int_0^t \|D_x^sK_{1}(t-s)(\bar{B}(\phi,\nabla\phi)w(s)+H(\phi,\nabla\phi,w))\|_{L^2}ds.
\end{align*}
Let us start estimating the first integral.  
\begin{align*}
\int_0^t \| & D_x^s \mathcal{K}_{1}(t-s) (\bar{B}(\phi,\nabla\phi)w(s)+H(\phi,\nabla\phi,w)(s))\|_{L^2}\\
\leq & \int_0^t Ce^{-c(t-s)}\| D_x^s\bar{B}(\phi,\nabla\phi)w(s)\|_{L^2}+\|D_x^s H(\phi,\nabla\phi,w)(s)\|_{L^2}ds\\
\leq & \int_0^t Ce^{-c(t-s)}(\|D_x^s\bar{b}(\phi,\nabla\phi)\|_{L^2}\|v(s)\|_{L^\infty}+\|\bar{b}(\phi,\nabla\phi)\|_{L^2}\|D_x^s v(s)\|_{L^\infty})ds\\
+&\int_0^t Ce^{-c(t-s)}(\|D_x^sh(\phi,\nabla\phi)\|_{L^2}\|g(u)(s)\|_{L^\infty}+\|\bar{b}(\phi,\nabla\phi)\|_{L^\infty})\|D_x^s g(u)(s)\|_{L^\infty})ds\\
\leq& \int_0^t e^{-c(t-s)}2C_{b'}K^{s-1}(\|D_x^{s+1}\phi(s)\|_{L^2}+\|D_x^s \phi(s)\|_{L^2})\|v(s)\|_{L^\infty}ds
\\
+&\int_0^t e^{-c(t-s)}B_k(\|\nabla\phi(s)\|_{L^\infty}+\|\phi(s)\|_{L^\infty})\|D_x^s v(s)\|_{L^2}ds\\
+&\int_0^t e^{-c(t-s)}2C_{h'}K^{s-1}G_k(\|D_x^{s+1}\phi(s)\|_{L^2}+\|D_x^s \phi(s)\|_{L^2})\|u(s)\|_{L^\infty}ds
\\
+&\int_0^t e^{-c(t-s)}H_kG_k(\|\nabla\phi(s)\|_{L^\infty}+\|\phi(s)\|_{L^\infty})\|D_x^s u(s)\|_{L^2}ds.
\end{align*}

Then, by Lemma \ref{Lemma5.2} we get
\begin{align*}
\int_0^t \| & D_x^s \mathcal{K}_{1}(t-s) (\bar{B}(\phi,\nabla\phi)w(s)+H(\phi,\nabla\phi,w)(s))\|_{L^2}\\
\leq& 2C_{b'}K^{s-1}\min\{1,t^{-(\tilde{\delta}+\frac{n}{2})}\}(M_{D_x^s \phi}^{\tilde{\delta}}(t)N_{v}^{\frac{n}{2}}(t)+M_{D_x^{s+1} \phi}^{\tilde{\delta}}(t)N_{v}^{\frac{n}{2}}(t))\\
+&B_k\min\{1,t^{-(\frac{n}{2}+\tilde{\nu})}\}(N_{D_x^1 \phi}^{\frac{n}{2}}(t)M_{D_x^s v}^{\tilde{\nu}}(t)+N_{\phi}^{\frac{n}{2}}(t)M_{D_x^s v}^{\tilde{\nu}}(t))\\
+&G_kC_{h'}K^{s-2}\min\{1,t^{-(\tilde{\delta}+\frac{n}{2})}\}(M_{D_x^s\phi}^{\tilde{\delta}}(t)N_{u}^{\frac{n}{2}}(t)
+M_{D_x^s\phi}^{\tilde{\delta}}(t)N_{u}^{\frac{n}{2}}(t))\\
+&H_kG_k\min\{1,t^{-(\tilde{\delta}+\frac{n}{2})}\}(M_{D_x^s u}^{\tilde{\delta}}(t)N_{\phi}^{\frac{n}{2}}(t)+M_{D_x^s u}^{\tilde{\delta}}(t)N_{D_x^1 \phi}^{\frac{n}{2}}(t)),
\end{align*}
where $\tilde{\delta}=\min\{\frac{n}{4}+\frac{1}{2}+\frac{s}{2},\frac{n}{2}\}$ and $\tilde{\nu}=\min\{\frac{n}{4}+1+\frac{s}{2},\frac{n}{2}\}$.\\
In order to complete our estimate, we need to study the contribution of the hyperbolic Green function diffusive part. Proceeding as before
\begin{align*}
\int_0^t\| & D_x^s K_{1}(t-s) (\bar{B}(\phi,\nabla\phi)(s)w(s)+H(\phi,\nabla\phi,w)(s))\|_{L^2}ds\\
\leq &\int_0^t C\min\{1,(t-s)^{-\frac{n}{4}-\frac{1}{2}-\frac{s}{2}}\}\|\bar{B}(\phi,\nabla\phi)(s)v(s)\|_{L^1}+\|h(\phi,\nabla\phi)g(u)(s)\|_{L^1}ds\\
\leq &\int_0^tC\min\{1,(t-s)^{-\frac{n}{4}-\frac{1}{2}-\frac{s}{2}}\}B_K(\|\phi(s)\|_{L^2}\|v(s)\|_{L^2}+\|\nabla\phi(s)\|_{L^2}\|v(s)\|_{L^2})ds\\
+&\int_0^tC\min\{1,(t-s)^{-\frac{n}{4}-\frac{1}{2}-\frac{s}{2}}\} G_kH_k(\|\phi(s)\|_{L^2}\|u(s)\|_{L^2}+\|\nabla\phi(s)\|_{L^2}\|u(s)\|_{L^2})ds.
\end{align*}
Moreover
\begin{align*}
\int_0^t\|&D_x^s K_{1}(t-s) (\bar{B}(\phi,\nabla\phi)(s)w(s)+H(\phi,\nabla\phi,w)(s))\|_{L^2}ds\\
\leq& CB_K(M_{\phi}^{\frac{n}{4}}(t)M_{v}^{\nu_{0}}(t)+M_{D_x^1 \phi}^{\frac{n}{4}}(t)M_{v}^{\nu_{0}}(t))\int_0^t\min\{1,(t-s)^{-\frac{n}{4}-\frac{1}{2}-\frac{s}{2}}\}\min\{1,s^{-(\frac{n}{4}+\nu_{0})}\}ds\\
+&C H_kG_k(M_{\phi}^{\frac{n}{4}}(t)M_{u}^{\frac{n}{4}}(t)+M_{D_x^1 \phi}^{\frac{n}{4}} M_{u}^{\frac{n}{4}}(t))\int_0^t\min\{1,(t-s)^{-\frac{n}{4}-\frac{1}{2}-\frac{s}{2}}\}\min\{1,s^{-\frac{n}{2}}\}ds.
\end{align*}
Then, by Lemma \ref{Lemma5.2} we get
\begin{align*}
\int_0^t \|&D_x^1 K_{1}(t-s)(\bar{B}(\phi(s),\nabla\phi(s))w+H(\phi,\nabla\phi ,w)(s))\|_{L^2}\\
\leq& C_k\min\{1,t^{-\tilde{\delta}}\}\left( M_{\phi}^{\frac{n}{4}}(t)M_{v}^{\nu_{0}}(t)+M_{D_x^1 \phi}^{\frac{n}{4}}(t)M_{v}^{\nu_{0}}(t)+M_{D_x^1 \phi}^{\frac{n}{4}}(t)M_{u}^{\frac{n}{4}}(t)+M_{\phi}^{\frac{n}{4}}(t)M_{u}^{\frac{n}{4}}(t)\right),
\end{align*}

where $\tilde{\delta}=\min\left\{\frac{n}{4}+\frac{1}{2}+\frac{s}{2},\frac{n}{2}\right\}$, and $\tilde{\nu}=\min\left\{\frac{n}{4}+1+\frac{s}{2},\frac{n}{2}\right\}$.

Finally the $L^2$- norm of the $s$-derivative of function $u$, can be estimated as follows:
\begin{eqnarray}\label{DUL2a}
\|D_x^s u(t)\|_{L^2}&\leq& C\left[e^{-ct}(\|D_x^su_0\|_{L^2}+\sum_{i=1}^n\|D_x^sv_0^i\|_{L^2})+\min\{1,t^{-\frac{n}{4}-\frac{1}{2}-\frac{s}{2}}\}\sum_{i=1}^n\|v_0^i\|_{L^1}\right.
\nonumber\\
&+&\left.\min\{1,t^{-\frac{n}{4}-\frac{s}{2}}\}\|u_0\|_{L^1}\right]+C_k\left[ \min\{1,t^{-(\tilde{\delta}+\frac{n}{2})}\}(M_{D_x^s \phi}^{\tilde{\delta}}(t)N_{v}^{\frac{n}{2}}(t)+M_{D_x^{s+1} \phi}^{\tilde{\delta}}(t)N_{v}^{\frac{n}{2}}(t))\right.\nonumber\\
&+&\min\{1,t^{-(\tilde{\nu}+\frac{n}{2})}\}(N_{D_x^1 \phi}^{\frac{n}{2}}(t)M_{D_x^s v}^{\tilde{\nu}}(t)+ N_{\phi}^{\frac{n}{2}}(t)M_{D_x^s v}^{\tilde{\nu}}(t))\\
&+&\min\{1,t^{-(\tilde{\delta}+\frac{n}{2})}\}(M_{D_x^s\phi}^{\tilde{\delta}}(t)N_{u}^{\frac{n}{2}}(t)
+M_{D_x^s\phi}^{\tilde{\delta}}(t)N_{u}^{\frac{n}{2}}(t))+M_{D_x^s u}^{\tilde{\delta}}(t)N_{\phi}^{\frac{n}{2}}(t)+M_{D_x^s u}^{\tilde{\delta}}(t)N_{D_x^1 \phi}^{\frac{n}{2}}(t)\nonumber\\
&+&\left.\min\{1,t^{-\tilde{\delta}}\}( M_{\phi}^{\frac{n}{4}}(t)M_{ v}^{\nu_{0}}(t)+M_{D_x^1 \phi}^{\frac{n}{4}}(t)M_{ v}^{\nu_{0}}(t)+M_{D_x^1 \phi}^{\frac{n}{4}}(t)M_{ u}^{\frac{n}{4}}(t)+M_{\phi}^{\frac{n}{4}}(t)M_{u}^{\frac{n}{4}}(t)) \right]\nonumber.
\end{eqnarray}


\paragraph{$L^\infty$- estimate for $u$\\}
Let us focus now on the $L^\infty$-norm of the function $u$.
\begin{equation}\label{uLinf}
\begin{array}{l}
\|u(t)\|_{L^\infty}\leq \|\Gamma^h_{1}(t)\ast w_0\|_{L^\infty}+\displaystyle\int_0^t \|\Gamma^h_{1}(t-s)\ast (\bar{B}(\phi,\nabla\phi)(s)w(s)+H(\phi,\nabla\phi,w)(s))\|_{L^\infty} ds.
\end{array}
\end{equation}
By the decomposition of the Green Kernel, we can estimate the first term in the previous inequality as
\begin{equation}
\begin{array}{lll}
\|\mathcal{K}_{11}(t) u_0\|_{L^\infty}\leq\|\mathcal{K}_{1,1}(t)u_0\|_{H^s}\leq Ce^{-ct}\|u_0\|_{H^s},&\, &\|K_{1,1}(t) u_0\|_{L^\infty}\leq C\min\{1,t^{-\frac{n}{2}}\}\|u_0\|_{L^1},\\\nonumber\\
\|\mathcal{K}_{1,i+1}(t)v_0^i\|_{L^2}\leq Ce^{-ct}\|v_0^i\|_{H^s},& \,& \|K_{1,i+1}(t) v_0^i\|_{L^2}\leq C\min\{1,t^{-\frac{n}{2}-\frac{1}{2}}\}\|v_0^i\|_{L^1}.
\end{array}
\end{equation}
Let us decompose the integral term.
\begin{align*}
\int_0^t \|&\Gamma^h_{1}(t-s)\ast (\bar{B}(\phi,\nabla\phi)(s)w(s)+H(\phi,\nabla\phi,w)(s))\|_{L^\infty}ds\\
\leq& \int_0^t \|\mathcal{K}_{1}(t-s) (\bar{B}(\phi,\nabla\phi)(s)w(s)+H(\phi,\nabla\phi,w)(s))\|_{L^\infty}ds\\
+&\int_0^t \|K_{1}(t-s) (\bar{B}(\phi,\nabla\phi)(s)w(s)+H(\phi,\nabla\phi,w)(s))\|_{L^\infty}ds.
\end{align*}
We can estimate the first integral as 
\begin{align*}
\int_0^t \|&  \mathcal{K}_{1}(t-s)(\bar{B}(\phi,\nabla\phi)(s)w(s)+H(\phi,\nabla\phi,w)(s))\|_{L^\infty}\\ 
\leq & C\int_0^t\|\mathcal{K}_{1}(t-s) (\bar{B}(\phi,\nabla\phi)(s)w(s)+H(\phi,\nabla\phi,w)(s))\|_{L^2}ds\\
+& C\int_0^t \sum_{|\alpha|=s}\|D_x^s \mathcal{K}_{1}(t-s) (\bar{B}(\phi,\nabla\phi)(s)w(s)+H(\phi,\nabla\phi,w)(s))\|_{L^2}ds.
\end{align*}

Then, thanks to the $L^2$-estimate calculated previously, we easily obtain 
\begin{align*}
\int_0^t \|&\mathcal{K}_{1}(t-s) (\bar{B}(\phi,\nabla\phi)(s)w(s)+H(\phi,\nabla\phi,w)(s))\|_{L^\infty}\\
\leq&
CB_k\min\{1,t^{-(\frac{n}{2}+\nu_0)}\}(N_{D_x^1 \phi}^{\frac{n}{2}}(t)M_{v}^{\nu_0}(t)+N_{\phi}^{\frac{n}{2}}(t)M_{v}^{\nu_0}(t))\\
+&CH_kG_k \min\{1,t^{-\frac{3}{4}n}\}(M_{u}^{\frac{n}{4}}(t)N_{\phi}^{\frac{n}{2}}(t)+ M_{u}^{\frac{n}{4}}(t)N_{D_x^1 \phi}^{\frac{n}{2}}(t))\\
+& 2C_{b'}K^{s-1}\min\{1,t^{-(\tilde{\delta}+\frac{n}{2})}\}(M_{D_x^s \phi}^{\tilde{\delta}}(t)N_{v}^{\frac{n}{2}}(t)+M_{D_x^{s+1} \phi}^{\tilde{\delta}}(t)N_{v}^{\frac{n}{2}}(t))\\
+&B_k\min\{1,t^{-(\frac{n}{2}+\tilde{\nu})}\}(N_{D_x^1 \phi}^{\frac{n}{2}}(t)M_{D_x^s v}^{\tilde{\nu}}(t)+N_{\phi}^{\frac{n}{2}}(t)M_{D_x^s v}^{\tilde{\nu}}(t))\\
+&G_kC_{h'}K^{s-2}\min\{1,t^{-(\tilde{\delta}+\frac{n}{2})}\}(M_{D_x^s\phi}^{\tilde{\delta}}(t)N_{u}^{\frac{n}{2}}(t)
+M_{D_x^s\phi}^{\tilde{\delta}}(t)N_{u}^{\frac{n}{2}}(t))\\
+&H_kG_k\min\{1,t^{-(\tilde{\delta}+\frac{n}{2})}\}(M_{D_x^s u}^{\tilde{\delta}}(t)N_{\phi}^{\frac{n}{2}}(t)+M_{D_x^s u}^{\tilde{\delta}}(t)N_{D_x^1 \phi}^{\frac{n}{2}}(t)).
\end{align*}

In order to complete our study on the $L^\infty$- norm of function $u$, we estimate the contribution of the hyperbolic Green function diffusive part.
\begin{align*}
\int_0^t\|&K_{1}(t-s) (\bar{B}(\phi,\nabla\phi)(s)w(s)+H(\phi,\nabla\phi,w)(s)(s))\|_{L^\infty}ds\\
\leq & \int_0^t \min\{1,(t-s)^{-\frac{n}{2}-\frac{1}{2}}\}\|\bar{B}(\phi,\nabla\phi)(s)v(s)\|_{L^1}+\|\bar{h}(\phi,\nabla\phi)g(u)(s)\|_{L^1}ds\\
\leq &\int_0^t\min\{1,(t-s)^{-\frac{n}{2}-\frac{1}{2}}\}B_k(\|\phi(s)\|_{L^2}\|v(s)\|_{L^2}+\|\nabla\phi(s)\|_{L^2}\|v(s)\|_{L^2})ds\\
+&\int_0^t\min\{1,(t-s)^{-\frac{n}{2}-\frac{1}{2}}\}H_kG_k(\|\phi(s)\|_{L^2}\|u(s)\|_{L^2}+\|\nabla\phi(s)\|_{L^2}\|u(s)\|_{L^2})ds\\
\leq & B_K(M_{\phi}^{\frac{n}{4}}(t)M_{v}^{\nu_{0}}(t)+M_{D_x^1 \phi}^{\frac{n}{4}}(t)M_{v}^{\nu_{0}}(t)) \int_0^t\min\{1,(t-s)^{-\frac{n}{2}-\frac{1}{2}}\}\min\{1,s^{-(\frac{n}{4}+\nu_{0})}\}ds\\
+&H_kG_k(M_{\phi}^{\frac{n}{4}}(t)M_{u}^{\frac{n}{4}}(t)+M_{D_x^1 \phi}^{\frac{n}{4}}(t)M_{u}^{\frac{n}{4}}(t))\int_0^t\min\{1,(t-s)^{-\frac{n}{2}-\frac{1}{2}}\}\min\{1,s^{-\frac{n}{2}}\}ds.
\end{align*}
Thanks to Lemma \ref{Lemma5.2} we deduce
\begin{align*}
\int_0^t \|& K_{1}(t-s) (\bar{B}(\phi,\nabla\phi)w(s)+H(\phi,\nabla\phi,w)(s))\|_{L^\infty} \\
\leq & C_k\min\{1,t^{-\frac{n}{2}}\}\left(M_{\phi}^{\frac{n}{4}}(t)M_{v}^{\nu_{0}}(t)+M_{D_x^1 \phi}^{\frac{n}{4}}(t)M_{v}^{\nu_{0}}(t)+M_{\phi}^{\frac{n}{4}}(t)M_{u}^{\frac{n}{4}}(t)+M_{D_x^1 \phi}^{\frac{n}{4}}(t)M_{u}^{\frac{n}{4}}(t)\right).
\end{align*}

We can collect the previous estimates in the following inequality 
\begin{eqnarray}\label{ULinfa}
\|u(t)\|_{L^\infty}&\leq& C\left[ e^{-ct}(\|u_0\|_{H^s}+\sum_{i=1}^n\|v_0^i\|_{H^s})+\min\{1,t^{-\frac{n}{2}}\}\|u^0\|_{L^1}+\min\{1,t^{-\frac{n}{2}-\frac{1}{2}}\}\sum_{i=1}^n\|v_0^i\|_{L^1}\right]\nonumber\\
&+&C_k\left[\min\{1,t^{-(\frac{n}{2}+\nu_{0})}\}(N_{D_x^1 \phi}^{\frac{n}{2}}(t)M_{v}^{\nu_{0}}(t)+N_{\phi}^{\frac{n}{2}}(t)M_{v}^{\nu_{0}}(t))\right.\nonumber\\
&+&\min\{1,t^{-\frac{3}{4}n}\}(M_{u}^{\frac{n}{4}}(t)N_{\phi}^{\frac{n}{2}}(t)+ M_{u}^{\frac{n}{4}}(t)N_{D_x^1 \phi}^{\frac{n}{2}}(t))\nonumber\\
&+& \min\{1,t^{-(\tilde{\delta}+\frac{n}{2})}\}(M_{D_x^s \phi}^{\tilde{\delta}}(t)N_{v}^{\frac{n}{2}}(t)+M_{D_x^{s+1} \phi}^{\tilde{\delta}}(t)N_{v}^{\frac{n}{2}}(t))\\
&+&\min\{1,t^{-(\frac{n}{2}+\tilde{\nu})}\}(N_{D_x^1 \phi}^{\frac{n}{2}}(t)M_{D_x^s v}^{\tilde{\nu}}(t)+N_{\phi}^{\frac{n}{2}}(t)M_{D_x^s v}^{\tilde{\nu}}(t))\nonumber\\
&+&\min\{1,t^{-(\tilde{\delta}+\frac{n}{2})}\}(M_{D_x^s\phi}^{\tilde{\delta}}(t)N_{u}^{\frac{n}{2}}(t)
+M_{D_x^s\phi}^{\tilde{\delta}}(t)N_{u}^{\frac{n}{2}}(t))+M_{D_x^s u}^{\tilde{\delta}}(t)N_{\phi}^{\frac{n}{2}}(t)+M_{D_x^s u}^{\tilde{\delta}}(t)N_{D_x^1 \phi}^{\frac{n}{2}}(t))\nonumber\\
&+&\left.\min\{1,t^{-\frac{n}{2}}\}(M_{\phi}^{\frac{n}{4}}(t)M_{v}^{\nu_{0}}(t)+M_{D_x^1 \phi}^{\frac{n}{4}}(t)M_{v}^{\nu_{0}}(t))+M_{\phi}^{\frac{n}{4}}(t)M_{u}^{\frac{n}{4}}(t)+M_{D_x^1 \phi}^{\frac{n}{4}}(t)M_{u}^{\frac{n}{4}}(t))\right],\nonumber
\end{eqnarray}

where the constant $C_k$ depends on $K$.


In order to complete our proof we need to estimate, by the same technique, the dissipative variable $v$.

\paragraph{$L^2$-estimate for v\\}
Let us start with the $L^2$-norm of a generic component $v_j$, with $j=1,\ldots,n$.

By the Duhamel's formula \eqref{comp_v} we get
\begin{equation}\label{vL2}
\begin{array}{l}
\|v_j(t)\|_{L^2}\leq \|\Gamma^h_{j+1}(t)\ast w_0\|_{L^2}+ \displaystyle\int_0^t \|\Gamma^h_{j+1}(t-s)\ast (\bar{B}(\phi,\nabla\phi)(s)w(s)+H(\phi,\nabla\phi,w)(s))\|_{L^2}ds.
\end{array}
\end{equation}
Then by the decomposition of the Green kernel and by Theorem \ref{Theo_BHN} we get the following estimates
\begin{equation*}
\begin{array}{lll}
\|\mathcal{K}_{j+1,1}(t) u_0\|_{L^2}\leq Ce^{-ct}\|u_0\|_{L^2}, &\quad &\|K_{j+1,1}(t) u_0\|_{L^2}\leq C\min\{1,t^{-\frac{n}{4}-\frac{1}{2}}\}\|u_0\|_{L^1},\\&&\\
\|\mathcal{K}_{j+1,i+1}(t) v_0^i\|_{L^2}\leq Ce^{-ct}\|v_0^i\|_{L^2},& \quad &\|K_{j+1,i+1}(t) v_0^i\|_{L^2}\leq C\min\{1,t^{-\frac{n}{4}-1}\}\|v_0^i\|_{L^1}.
\end{array}
\end{equation*}

We pass now to estimate the second term in (\ref{vL2}). Decomposing the integral term, we get
\begin{align*}
\int_0^t \|&\Gamma^h_{j+1}(t-s) (\bar{B}(\phi,\nabla\phi)(s)w(s)+H(\phi,\nabla\phi,w)(s))\|_{L^2}ds \\
\leq&\int_0^t \|\mathcal{K}_{j+1}(t-s) (\bar{B}(\phi,\nabla\phi)(s)w(s)+H(\phi,\nabla\phi,s)(s))\|_{L^2}ds\\
+&\int_0^t \|K_{j+1}(t-s)(\bar{B}(\phi(s),\nabla\phi(s))w(s)+H(\phi,\nabla\phi,u)(s))\|_{L^2}ds.
\end{align*}
Let us focus on the first integral on the right-hand side.
We can notice that, since the singular part of the Green Kernel has the same decay rate for both conservative and dissipative variable, we can estimate this term, as done previously in the estimate of function $u$. Then,
\begin{align*}
\int_0^t \|&\mathcal{K}_{i+1}(t-s) (\bar{B}(\phi,\nabla\phi)w+H(\phi,\nabla\phi,w)(s))\|_{L^2}ds\\
\leq&CB_k\min\{1,t^{-(\frac{n}{2}+\nu_{0})}\}\left(N_{D_x^1 \phi}^{\frac{n}{2}}(t)M_{v}^{\nu_{0}}(t)
+N_{\phi}^{\frac{n}{2}}(t)M_{v}^{\nu_{0}}(t)\right)\\
+&CH_kG_k \min\{1,t^{-(\frac{n}{4}+\frac{n}{2})}\}\left(M_{u}^{\frac{n}{4}}(t)N_{\phi}^{\frac{n}{2}}(t)+ M_{u}^{\frac{n}{4}}(t)N_{D_x^1 \phi}^{\frac{n}{2}}(t)\right),
\end{align*}
where $\nu_{0}=\min\left\{\frac{n}{4}+\frac{1}{2},\frac{n}{2}\right\}$.
On the other hand, when estimating the dissipative term of Green Kernel diffusive part, we get a faster decay, with respect to the conservative variable $u$. The dissipative part, being strongly influenced by the dissipation, decays at the rate $t^{-\frac{1}{2}}$ faster of the conservative one.\\ 
Proceeding as done before, 
\begin{align*}
\int_0^t\|& K_{j+1}(t-s) (\bar{B}(\phi,\nabla\phi)(s)w(s)+H(\phi,\nabla\phi,w)(s))\|_{L^2}ds\\
\leq&\int_0^t C\min\{1,(t-s)^{-\frac{n}{4}-1}\}\|(\bar{B}(\phi,\nabla\phi)(s)w(s)+H(\phi,\nabla\phi,w)(s))\|_{L^1}ds\\
\leq& \int_0^tC\min\{1,(t-s)^{-\frac{n}{4}-1}\}B_k(\|\phi(s)\|_{L^2}\|v(s)\|_{L^2}+\|\nabla\phi(s)\|_{L^2}\|v(s)\|_{L^2})ds\\
+& \int_0^tC\min\{1,(t-s)^{-\frac{n}{4}-1}\}H_kG_k(\|\phi(s)\|_{L^2}\|u(s)\|_{L^2}+\|\nabla\phi(s)\|_{L^2}\|u(s)\|_{L^2})ds\\
\leq& B_K(M_{\phi}^{\frac{n}{4}}(t)M_{v}^{\nu_{0}}(t)+M_{D_x^1 \phi}^{\frac{n}{4}}(t)M_{v}^{\nu_{0}}(t))\int_0^tC\min\{1,(t-s)^{-\frac{n}{4}-1}\}\min\{1,s^{-(\frac{n}{4}+\nu_{0})}\}ds\\
+&H_kG_k(M_{\phi}^{\frac{n}{4}}(t)M_{u}^{\frac{n}{4}}(t)+M_{D_x^1 \phi}^{\frac{n}{4}}(t)M_{u}^{\frac{n}{4}}(t))\int_0^tC\min\{1,(t-s)^{-\frac{n}{4}-1}\}\min\{1,s^{-\frac{n}{2}}\}ds.
\end{align*}

Thanks to Lemma \ref{Lemma5.2} we deduce
\begin{align*}
\int_0^t \|& K_{j+1}(t-s) (\bar{B}(\phi,\nabla\phi)(s)w(s)+H(\phi,\nabla\phi,w)(s))\|_{L^2}\\
\leq& \min\{1,t^{-\nu_0}\}C_k(M_{\phi}^{\frac{n}{4}}(t)M_{v}^{\nu_{0}}(t)+M_{D_x^1 \phi}^{\frac{n}{4}}(t)M_{v}^{\nu_{0}}(t)
+M_{\phi}^{\frac{n}{4}}(t)M_{u}^{\frac{n}{4}}(t)+ M_{D_x^1 \phi}^{\frac{n}{4}}(t)M_{u}^{\frac{n}{4}}(t)).
\end{align*}
where $\nu_0=\min \left\{\frac{n}{2},\frac{n}{4}+1\right\}$.
Then, summing the previous inequalities we obtain the $L^2$-norm of the function $v$.
\begin{eqnarray}\label{VL2a}
\|v_j(t)\|_{L^2}&\leq& C\left[e^{-ct}(\|u_0\|_{L^2}+\sum_i\|v_0^i\|_{L^2})+\min\{1,t^{-\frac{n}{4}-\frac{1}{2}}\}\|u^0\|_{L^1}+\min\{1,t^{-\frac{n}{4}-1}\}\sum_i\|v_0^i\|_{L^1}\right]\nonumber\\
&+&C_k\left[\min\{1,t^{-(\frac{n}{2}+\nu_{0})}\}(N_{D_x^1 \phi}^{\frac{n}{2}}(t)M_{v}^{\nu_{0}}(t)+N_{\phi}^{\frac{n}{2}}(t)M_{v}^{\nu_{0}}(t))\right.\\
&+&\min\{1,t^{-\frac{3}{4}n}\}(M_{u}^{\frac{n}{4}}(t)N_{\phi}^{\frac{n}{2}}(t)+M_{u}^{\frac{n}{4}}(t)N_{D_x^1 \phi}^{\frac{n}{2}}(t))\nonumber\\
&+&\left.\min\{1,t^{-\nu_0}\}(M_{\phi}^{\frac{n}{4}}(t)M_{v}^{\nu_{0}}(t)+M_{D_x^1 \phi}^{\frac{n}{4}}(t)M_{v}^{\nu_{0}}(t)+M_{\phi}^{\frac{n}{4}}(t)M_{u}^{\frac{n}{4}}(t)+ M_{D_x^1 \phi}^{\frac{n}{4}}(t)M_{u}^{\frac{n}{4}}(t))\right]\nonumber.
\end{eqnarray}

In order to complete our study we need to estimate the $L^2$- norm of the $s$-derivative of function $v$ and its $L^\infty$- norm.
\paragraph{$L^2$-estimate for $D_x^s v$\\}
Regarding the $s$-order estimate for $v_j$, we have
\begin{eqnarray}\label{DVL2a}
\|D_x^s v_j(t)\|_{L^2}&\leq& C\left[e^{-ct}(\|D_x^su_0\|_{L^2}+\sum_{i=1}^n\|D_x^sv_0^i\|_{L^2}+\min\{1,t^{-\frac{n}{4}-\frac{1}{2}-\frac{s}{2}}\}\|u_0\|_{L^1}\right.\nonumber\\
&+&\left.\min\{1,t^{-\frac{n}{4}-1-\frac{s}{2}}\}\sum_{i=1}^n\|v_0^i\|_{L^1})\right]+ C_k\left[\min\{1,t^{-(\tilde{\delta}+\frac{n}{2})}\}(M_{D_x^s \phi}^{\tilde{\delta}}(t)N_{v}^{\frac{n}{2}}(t)+M_{D_x^{s+1} \phi}^{\tilde{\delta}}(t)N_{v}^{\frac{n}{2}}(t))\right.\nonumber\\
&+&\min\{1,t^{-(\frac{n}{2}+\tilde{\nu})}\}(N_{D_x^1 \phi}^{\frac{n}{2}}(t)M_{D_x^s v}^{\tilde{\nu}}(t)+ N_{\phi}^{\frac{n}{2}}(t)M_{D_x^s v}^{\tilde{\nu}}(t))\\
&+&\min\{1,t^{-(\tilde{\delta}+\frac{n}{2})}\}(M_{D_x^s\phi}^{\tilde{\delta}}(t)N_{u}^{\frac{n}{2}}(t)
+M_{D_x^s\phi}^{\tilde{\delta}}(t)N_{u}^{\frac{n}{2}}(t)+M_{D_x^s u}^{\tilde{\delta}}(t)N_{\phi}^{\frac{n}{2}}(t)+M_{D_x^s u}^{\tilde{\delta}}(t)N_{D_x^1 \phi}^{\frac{n}{2}}(t)\nonumber\\
&+&\left.\min\{1,t^{-\tilde{\nu}}\}(M_{\phi}^{\frac{n}{4}}(t)M_{ v}^{\nu_{0}}(t)+M_{D_x^1 \phi}^{\frac{n}{4}}(t)M_{ v}^{\nu_{0}}(t)+M_{\phi}^{\frac{n}{4}}(t)M_{v}^{\nu_{0}}(t)
+M_{D_x^1\phi}^{\frac{n}{4}}(t)M_{v}^{\nu_{0}}(t)\right].\nonumber
\end{eqnarray}
Let us recall that $\tilde{\nu}=\min\left\{\frac{n}{4}+1+ \frac{s}{2},\frac{n}{2}\right\}$.


\paragraph{$L^\infty$-estimate for $v$\\}
On the other hand, for the $L^\infty$- norm of function $v_j$, we get the following estimates
\begin{eqnarray}\label{VLinfa}
\|v_j(t)\|_{L^\infty}&\leq& C\left[e^{-ct}(\|u_0\|_{H^s}+\sum_{i=1}^n\|v_0^i\|_{H^s})+\min\{1,t^{-\frac{n}{2}-\frac{1}{2}}\}\|u_0\|_{L^1}+\min\{1,t^{-\frac{n}{2}-1}\}\sum_{i=1}^n\|v_0^n\|_{L^1}\right]\nonumber\\
&+&C_k\left[\min\{1,t^{-(\frac{n}{2}+\nu_0)}\}(N_{D_x^1 \phi}^{\frac{n}{2}}(t)M_{v}^{\nu_0}(t)+N_{\phi}^{\frac{n}{2}}(t)M_{v}^{\nu_0}(t))\right.\nonumber\\
&+&\min\{1,t^{-\frac{3}{4}n}\}(M_{u}^{\frac{n}{4}}(t)N_{\phi}^{\frac{n}{2}}(t)+ M_{u}^{\frac{n}{4}}(t)N_{D_x^1 \phi}^{\frac{n}{2}}(t))\nonumber\\
&+& \min\{1,t^{-(\tilde{\delta}+\frac{n}{2})}\}(M_{D_x^s \phi}^{\tilde{\delta}}(t)N_{v}^{\frac{n}{2}}(t)+M_{D_x^{s+1} \phi}^{\tilde{\delta}}(t)N_{v}^{\frac{n}{2}}(t))\\
&+&\min\{1,t^{-(\frac{n}{2}+\tilde{\nu})}\}(N_{D_x^1 \phi}^{\frac{n}{2}}(t)M_{D_x^s v}^{\tilde{\nu}}(t)+N_{\phi}^{\frac{n}{2}}(t)M_{D_x^s v}^{\tilde{\nu}}(t))\nonumber\\
&+&\min\{1,t^{-(\tilde{\delta}+\frac{n}{2})}\}(M_{D_x^s\phi}^{\tilde{\delta}}(t)N_{u}^{\frac{n}{2}}(t)
+M_{D_x^s\phi}^{\tilde{\delta}}(t)N_{u}^{\frac{n}{2}}(t)+M_{D_x^s u}^{\tilde{\delta}}(t)N_{\phi}^{\frac{n}{2}}(t)+M_{D_x^s u}^{\tilde{\delta}}(t)N_{D_x^1 \phi}^{\frac{n}{2}}(t))\nonumber\\
&+&\min\{1,t^{-\frac{n}{2}}\}(M_{\phi}^{\frac{n}{4}}(t)M_{v}^{\nu_{0}}(t)+M_{D_x^1 \phi}^{\frac{n}{4}}(t)M_{v}^{\nu_{0}}(t))+M_{\phi}^{\frac{n}{4}}(t)M_{u}^{\frac{n}{4}}(t)+M_{D_x^1 \phi}^{\frac{n}{4}}(t)M_{u}^{\frac{n}{4}}(t))\left.\right].\nonumber
\end{eqnarray}

Once that, decay rates of variable have been determinated by inequalities \eqref{UL2a}, \eqref{DUL2a}, \eqref{ULinfa}, \eqref{VL2a}, \eqref{DVL2a}, \eqref{VLinfa}, we apply Proposition \ref{phi_estimates} to get the following estimates for the functionals related to the solution $(u,v)$.
For $t>\epsilon>0$,
\begin{equation*}
\begin{array}{lll}\label{MU}
M_{u}^{\frac{n}{4}}(t)&\leq& C_1 \left[ E_0+ D_0+ D_0\left(M_{u}^{\frac{n}{4}}(t)+N_u^{\frac{n}{2}}(t)+M_{v}^{\nu_{0}}(t)+N_{v}^{\frac{n}{2}}(t)\right) +(N_u^{\frac{n}{2}}(t))^2\right. \\\\
&+&\left.M_{u}^{\frac{n}{4}}(t)N_{v}^{\frac{n}{2}}(t)+N_u^{\frac{n}{2}}(t)M_{u}^{\frac{n}{4}}(t)+(M_{u}^{\frac{n}{4}}(t))^2+M_{u}^{\frac{n}{4}}(t)M_{v}^{\nu_{0}}(t) \right],
\end{array}
\end{equation*}
where $\nu_{0}=\min \left\{\frac{n}{2},\frac{n}{4}+\frac{1}{2}\right\}$.
\begin{equation*}
\begin{array}{lll}\label{MDU}
M_{D_x^s u}^{\tilde{\delta}}(t)&\leq& C_2\left[ E_0+ D_0+ D_0\left(M_{u}^{\frac{n}{4}}(t)+N_u^{\frac{n}{2}}(t)+ M_{D_x^s u}^{\tilde{\delta}}(t)+M_{v}^{\nu_{0}}(t)+N_{v}^{\frac{n}{2}}(t)+M_{D_x^s v}^{\tilde{\nu}}(t)\right)\right.\\\\
&+&N_u^{\frac{n}{2}}(t)M_{D_x^s v}^{\tilde{\nu}}(t)+(M_{u}^{\frac{n}{4}}(t))^2+ M_{D_x^s u}^{\tilde{\delta}}(t)N_{v}^{\frac{n}{2}}(t)+ \left.N_u^{\frac{n}{2}}(t)M_{D_x^s u}^{\tilde{\delta}}(t)+M_{u}^{\frac{n}{4}}(t)M_{v}^{\nu_{0}}+(M_{D_x^s u}^{\tilde{\delta}}(t))^2 
\right],
\end{array}
\end{equation*}
where 
$\tilde{\delta}=\min\{\frac{n}{4}+\frac{1}{2}+\frac{s}{2},\frac{n}{2}\}$, and
$\tilde{\nu}=\min\{\frac{n}{4}+1+\frac{s}{2},\frac{n}{2}\}$.

\begin{equation*}
\begin{array}{lll}\label{NU}N_{u}^{\frac{n}{2}}(t)&\leq& C_3\left[ E_0+  D_0+ D_0 \left(M_{u}^{\frac{n}{4}}(t)+N_u^{\frac{n}{2}}(t)+ M_{D_x^s u}^{\tilde{\delta}}(t) +M_{v}^{\nu_{0}}(t)+N_{v}^{\frac{n}{2}}(t)+M_{D_x^s v}^{\tilde{\nu}}(t)\right)\right. \\\\
&+&M_{u}^{\frac{n}{4}}(t)N_{v}^{\frac{n}{2}}(t)+N_u^{\frac{n}{2}}(t)M_{v}^{\nu_{0}}(t)+N_u^{\frac{n}{2}}(t)M_{u}^{\frac{n}{4}}(t)+(M_{u}^{\frac{n}{4}}(t))^2+ M_{D_x^s u}^{\tilde{\delta}}(t)N_{v}^{\frac{n}{2}}(t)\\\\
&+&\left. N_u^{\frac{n}{2}}(t)M_{D_x^s u}^{\tilde{\delta}}(t)+M_{u}^{\frac{n}{4}}(t)M_{v}^{\nu_{0}}(t)+M_{D_x^s v}^{\tilde{\nu}}(t)N_u^{\frac{n}{2}}(t)+(M_{D_x^s u}^{\tilde{\nu}}(t))^2 + M_{D_x^s u}^{\frac{n}{2}}(t)N_u^{\frac{n}{2}}(t)
\right]. \\\\
\end{array}
\end{equation*}

\begin{equation*}
\begin{array}{lll}\label{MV}
M_{v}^{\nu_{0}}(t)&\leq& C_4 \left[ E_0+ D_0+ D_0\left.(M_{u}^{\frac{n}{4}}(t)+N_u^{\frac{n}{2}}(t)+M_{v}^{\nu_{0}}(t)+N_{v}^{\frac{n}{2}}(t)\right) (N_u^{\frac{n}{2}}(t))^2\right. \\\\
&+&\left.M_{u}^{\frac{n}{4}}(t)N_{v}^{\frac{n}{2}}(t)+N_u^{\frac{n}{2}}(t)M_{u}^{\frac{n}{4}}(t)+(M_{u}^{\frac{n}{4}}(t))^2+M_{u}^{\frac{n}{4}}(t)M_{v}^{\nu_{0}}(t) \right].\\\\\\ 

M_{D_x^s v}^{\tilde{\nu}}(t)&\leq& C_{5} \left[ E_0+ D_0+ D_0\left(M_{u}^{\frac{n}{4}}(t)+N_u^{\frac{n}{2}}(t)+ M_{D_x^s u}^{\tilde{\delta}}(t)+M_{v}^{\nu_{0}}(t)+N_{v}^{\frac{n}{2}}(t)+M_{D_x^s v}^{\tilde{\nu}}(t)\right)\right.\\\\
&+&N_u^{\frac{n}{2}}(t)M_{D_x^s v}^{\tilde{\nu}}(t)+(M_{u}^{\frac{n}{4}}(t))^2+ M_{D_x^s u}^{\tilde{\delta}}(t)N_{v}^{\frac{n}{2}}(t)+ \left.N_u^{\frac{n}{2}}(t)M_{D_x^s u}^{\tilde{\delta}}(t)+M_{u}^{\frac{n}{4}}(t)M_{v}^{\nu_{0}}+(M_{D_x^s u}^{\tilde{\delta}}(t))^2 
\right].\\\\\\

N_{v}^{\frac{n}{2}}(t)&\leq& C_6 \left[ E_0+  D_0+ D_0 \left(M_{u}^{\frac{n}{4}}(t)+N_u^{\frac{n}{2}}(t)+ M_{D_x^s u}^{\tilde{\delta}}(t) +M_{v}^{\nu_{0}}(t)+N_{v}^{\frac{n}{2}}(t)+M_{D_x^s v}^{\tilde{\nu}}(t)\right)\right. \\\\
&+&M_{u}^{\frac{n}{4}}(t)N_{v}^{\frac{n}{2}}(t)+N_u^{\frac{n}{2}}(t)M_{v}^{\nu_{0}}(t)+N_u^{\frac{n}{2}}(t)M_{u}^{\frac{n}{4}}(t)+(M_{u}^{\frac{n}{4}}(t))^2+ M_{D_x^s u}^{\tilde{\delta}}(t)N_{v}^{\frac{n}{2}}(t)\\\\
&+&\left. N_u^{\frac{n}{2}}(t)M_{D_x^s u}^{\tilde{\delta}}(t)+M_{u}^{\frac{n}{4}}(t)M_{v}^{\nu_{0}}(t)+M_{D_x^s v}^{\tilde{\nu}}(t)N_u^{\frac{n}{2}}(t)+(M_{D_x^s u}^{\tilde{\nu}}(t))^2 + M_{D_x^s u}^{\frac{n}{2}}(t)N_u^{\frac{n}{2}}(t)+ (N_u^{\frac{n}{2}}(t))^2
\right].  \end{array}
\end{equation*}

where $D_0=\|\phi_0\|_{H^{s+1}}$, $E_0=\max\{ \|u_0\|_{H^{s}}, \|u_0\|_{L^1}, \|v_0\|_{H^s},\|v_0\|_{L^1}\}$ and the constant $C_i=C_i(F_k,K, C_{b'},C_{h'})$.\\

Let us define 
$$
P(t):=M_{u}^{\frac{n}{4}}(t)+N_u^{\frac{n}{2}}(t)+ M_{D_x^s u}^{\tilde{\delta}}(t)+M_{v}^{\nu_{0}}(t)+N_{v}^{\frac{n}{2}}(t)+M_{D_x^sv}^{\tilde{\nu}}(t).
$$
We can notice that all the previous estimates are linear combinations of 
sums of type: 
 $A_0 F_w^\delta(t)+$ $F_w^\delta(t)F_{w_1}^{\delta^1}(t)$ where  $F_w^\delta, F_{w_1}^{\delta^1}$ are 
 terms of $P(t)$. Then it is possible to estimate each of them with $A_0P(t)+P(t)^2$. 
It follows that if initial data are small, we have

\begin{equation}
C_kP(t)^2-(1-C_{k0})P(t)+C_0 \geq 0,
\end{equation}
where $C_k$ is a positive constant depending on $K$, $C_{k0}$ is a positive constant depending on $K$ and on data, and $C_0$ also is a positive constant depending on data.
For suitably small initial data, this inequality implies that $M_{u}^{\frac{n}{4}}(t)$, $N_u^{\frac{n}{2}}(t)$, $M_{D_x^s u}^{\tilde{\delta}}(t)$, $M_{v}^{\nu_{0}}(t)$, $N_{v}^{\frac{n}{2}}(t)$, $M_{D_x^s v}^{\tilde{\nu}}(t)$ remain
bounded, as far as $\|u,v\|_{L^\infty}\leq K$ and $\|\phi\|_{W^{1,\infty}}\leq K$. When $t>1$ this implies that $\|w(t)\|_{L^\infty}$ does not increase with $t$. Thanks to Proposition \ref{phi_estimates}
the same is true for $N^{\frac{n}{2}}_{\phi}$ and $N^{\frac{n}{2}}_{D_x^1 \phi}$. \\
Since we have obtained that $\|\phi(t)\|_{W^{1,\infty}}$ is bounded, 
from Lemma \ref{boundinf} and the continuation principle we have the global existence of smooth solutions to system (\ref{sistema_GUMA}).

\subsubsection{Optimal Decay Rates}
In order to complete our proof, we need to improve the decay rates of $D_x^s u$, $D_x^s v$, $D_x^s \phi$ in the $L^2$ norm.\\
By the previous estimates, we got that, independently from the derivative order $s$, the decays rates of these function are equal to  $\tilde{\delta}= \min\left\{\frac{n}{4}+\frac{1}{2}+\frac{s}{2},\frac{n}{2}\right\}$, for $u$, $\phi$ and equal to $\tilde{\nu}= \min\left\{\frac{n}{4}+1+\frac{s}{2},\frac{n}{2}\right\}$ for $v$. This implies that, for small $n$, even if the derivative order is high, we get always the decay $t^{-\frac{n}{2}}$.

Looking at inequality \eqref{DUL2a} we notice that these decays come from the estimates related to Green Kernel diffusive part.
Then, we need to adopt a different strategy to estimate these terms and overcome the difficulty, i.e. split the derivatives on both terms. 
We show this procedure by induction on a simple source term $\phi u$.
 
\begin{itemize}
\item Let $s=1$.
Since in this case we cannot split the order of derivative, we proceed as done before keeping the derivative on the Green Kernel. Then,
\begin{align*}
\int_0^t\| D_x^1K_{12}(t-s) (u(s)\phi(s))\|_{L^2}ds\leq &\int_0^t C\min\{1,(t-s)^{-\frac{n}{4}-1}\}\|\phi)(s)u(s)\|_{L^1}ds\\
\leq&\int_0^tC\min\{1,(t-s)^{-\frac{n}{4}-1}\} \|\phi(s)\|_{L^2}\|u(s)\|_{L^2}ds\\
\leq&CM_{\phi}^{\frac{n}{4}}(t) M_{u}^{\frac{n}{4}}(t)\int_0^t\min\{1,(t-s)^{-\frac{n}{4}-1}\}\min\{1,s^{-\frac{n}{2}}\}ds\\
\leq &\min\{1,t^{-\delta_1}\}C M_{\phi}^{\frac{n}{4}}(t)M_{u}^{\frac{n}{4}}(t),
\end{align*}
where $\delta_1=\min\left\{\frac{n}{4}+1,\frac{n}{2}\right\}$. 

\item Let us consider the second order derivative, i.e. $s=2$. Now we split the derivative both on 
 the Green Kernel and the source term, proceeding as follows, 
\begin{align*}
\int_0^t\|  D_x^2 K_{12}(t-s) & (\phi(s)u(s)\|_{L^2}ds= \int_0^t\| D_x^1 K_{12}(t-s) D_x^1(\phi(s)u(s))\|_{L^2}ds\\
\leq&\int_0^tC \min\{1,(t-s)^{-\frac{n}{4}-1}\}\|D_x^1 \phi(s)u(s)\|_{L^1}ds\\
\leq&\int_0^tC\min\{1,(t-s)^{-\frac{n}{4}-1}\}(\|D_x^1\phi\|_{L^2}\|u(s)\|_{L^2}+\|\phi(s)\|_{L^2}\|D_x^1u(s)\|_{L^2})\\
+&C(M_{D_x^1 \phi}^{\delta_1} M_{u}^{\frac{n}{4}}(t)+M_{\phi}^{\frac{n}{4}} M_{D_x^1u}^{\delta}(t))\int_0^t\min\{1,(t-s)^{-\frac{n}{4}-1}\}\min\{1,s^{-\frac{n}{4}-\delta_1}\}ds\\
\leq& \min\{1,t^{-\delta_2}\}C(M_{\phi}^{\frac{n}{4}}(t)M_{D_x^1 u}^{\delta_1}(t)
+M_{D_x^1 \phi}^{\delta_1}(t)M_{u}^{\frac{n}{4}}(t)),
\end{align*}
where $\delta_2=\min\left\{\frac{n}{4}+1,\frac{n}{4}+\delta_1\right\}$.

\item Finally, we iterate the procedure for a generic $s$, splitting the derivatives as  follows.  We left 
$\left[\frac{s+1}{2}\right]$ derivatives on the Green Kernel, and the remaining ones $\left[\frac{s}{2}\right]$, on the source terms.
By this way we get
\begin{align*}
\int_0^t \|D_x^s K_{12}(t-s)(\phi(s)u(s))\|_{L^2}\leq& \int_0^t\| D_x^{\left[\frac{s+1}{2}\right]}K_{12}(t-s) D_x^{\left[\frac{s}{2}\right]}(\phi(s)u(s))\|_{L^2}ds\\
\leq &\min\{1,t^{-\delta_s}\}C( M_{\phi}^{\frac{n}{4}}(t)M_{D_x^r u}^{\delta_{r}}(t)+M_{D_x^r\phi}^{\delta_r}(t)M_{u}^{\frac{n}{4}}(t)),
\end{align*}
where $\delta_s=\min\left\{\frac{n}{4}+\frac{1}{2}+\frac{1}{2}\left[\frac{s+1}{2}\right],\frac{n}{4}+\delta_r\right\}$, with $r=\left[\frac{s}{2}\right]$.\\
Thus, through this simple procedure, we are able to obtain faster decays rates for the $s$ derivative of the functions $u,v$ and $\phi$.
More precisely for the $s-$derivative of function $v$, since the Green Kernel has a faster decay, we get the rate  $\nu_s=\min\left\{\frac{n}{4}+1+\frac{1}{2}\left[\frac{s+1}{2}\right],\frac{n}{4}+\delta_r\right\}$.
\end{itemize}

\end{proof}

\section{The Cauchy Problem : Perturbations of Non-Zero Constant Stationary States}\label{costante}
The aim of this section is to investigate the behavior of non-zero small constant states.
For the sake of simplicity we will consider the system with simpler source terms,
\begin{equation*}
\label{sistema_easy}
\left\{
\begin{array}{l}
\partial_{t} \tilde{u} +\nabla \cdot \tilde{v} = 0, \\\\
\partial_{t}\tilde{v} + \nabla \tilde{u}= -\tilde{v}+\tilde{u}\nabla \tilde{\phi}, \\\\
\partial_{t} \tilde{\phi} =\Delta \tilde{\phi}+ a\tilde{u} -b\tilde{\phi},
\end{array}
\right.
\end{equation*}
where $(\tilde{u},\tilde{v},\tilde{\phi})=(\bar{u}+u,v,\bar{\phi}+\phi)$, $(\bar{u},0, \bar{\phi})$ is a stationary solution with $\bar{\phi}=\frac{a}{b}\bar{u}$, and $(u,v,\phi)$ is a perturbation.
Therefore we can rewrite the previous system as follows
\begin{equation}
\label{sistema_easy2}
\left\{
\begin{array}{l}
\partial_{t} u +\nabla \cdot v = 0, \\\\
\partial_{t} v+ \nabla u= -v+(u +\bar{u})\nabla \phi, \\\\
\partial_{t} \phi =\Delta \phi+ au -b\phi.
\end{array}
\right.
\end{equation}
This system is supplemented by the initial conditions
\begin{equation}\label{dati_stato}
u_0, v_0\in H^{s}(\R^n)\cap L^1(\R^n), \quad \phi_0 \in H^{s+1}(\R^n)\cap L^1(\R^n).
\end{equation}

In order to prove the global existence result and the decay of solutions to \eqref{sistema_easy2} we will proceed along the lines of the previous sections. Then starting from a local solution to \eqref{sistema_easy2}, which is guaranteed by Theorem \ref{locale}, we will get estimates and decay rates of the $H^s$ and $L^\infty$ norm. Then by the continuation principle \ref{continuation} we will obtain our existence result.\\
To get the decay of solutions we need to adapt the technique used in the above proof of stability for the zero constant state, to treat the linear term $\bar{u}\nabla\phi$, which does not present enough polynomial decay.\\ Let us consider a local solution to system \eqref{sistema_easy2}. 
Taking into account the expressions for the Green function, we are going to estimates the norm of solutions. Even if these new estimates are not optimal, they are in suitable spaces.

\begin{theorem}\label{Theo_BHN2}
Consider the linear PDE in the conservative-dissipative form
\begin{equation*}\label{theo_CD}
\partial_t w+\sum_{j=1}^n A_j \partial_{x_j} w= Bw,
\end{equation*}
where $A_j$, $B$ satisfy the assumption (SK), and let $Q_0=R_0L_0$, $Q_{-}=I-Q_0=R_{-}L_{-}$ be the eigenprojectors on the null 
space and the negative definite part of $B$ with  $L_0=R_0^T=[I_{n_1} \; 0 ]$ and $L_-=R_-^T=[0 \; I_{n_2}]$.

Then, for any function $w_0\in   L^1 \cap L^2 (\mathbb{R}^n,\mathbb{R}^{n+1})$  the solution of the linear dissipative system can be decomposed as
$$
w(t)=\Gamma^h(t)\ast w_0=K(t)w_0+\mathcal{K}(t) w_0,
$$  
where  for any multi index $\beta$, the following estimates hold :

\begin{eqnarray*}\label{k_estimates}
\left\|L_0D^\beta K(t)w_0\right\|_{L^2}&\leq& C \min\{1, t^{-(\frac{n}{8}+\frac{\beta}{2})}\}\|L_0w^0\|^\frac{1}{2}_{L^2}\|L_0w_0\|_{L^\infty}^{\frac{1}{2}}\\
&+&C \min\{1, t^{-(\frac{n}{8}+\frac{\beta}{2}+\frac{1}{2})}\}\|L_-w^0\|^\frac{1}{2}_{L^2}\|L_-w_0\|^\frac{1}{2}_{L^\infty}\nonumber\\
\left\|L_-D^\beta K(t)w_0\right\|_{L^2}&\leq& C \min\{1, t^{-(\frac{n}{8}+\frac{\beta}{2}+\frac{1}{2})}\}\|L_0w_0\|^\frac{1}{2}_{L^2}\|L_0w^0\|_{L^\infty}^{\frac{1}{2}}\\&+&C \min\{1, t^{-(\frac{n}{8}+\frac{\beta}{2}+1)}\}\|L_-w^0\|^\frac{1}{2}_{L^2}\|L_-w_0\|^\frac{1}{2}_{L^\infty}\nonumber\\
\left\|L_0D^\beta K(t)w_0\right\|_{L^\infty}&\leq& C \min\{1, t^{-(\frac{n}{4}+\frac{\beta}{2})}\}\|L_0w_0\|_{L^2}+C \min\{1, t^{-(\frac{n}{4}+\frac{\beta}{2}+\frac{1}{2})}\}\|L_-w_0\|_{L^2}\nonumber\\
\left\|L_-D^\beta K(t)w_0\right\|_{L^\infty}&\leq &C \min\{1, t^{-(\frac{n}{4}+\frac{\beta}{2}+\frac{1}{2})}\}\|L_0w_0\|_{L^2}+C \min\{1, t^{-(\frac{n}{4}+\frac{\beta}{2}+1)}\}\|L_-w_0\|_{L^2}\nonumber.
\end{eqnarray*}

\end{theorem}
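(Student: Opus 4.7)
The plan is to revisit the Fourier-space proof of Theorem \ref{Theo_BHN} and upgrade its final Hausdorff--Young step to produce the mixed $L^2$--$L^\infty$ right-hand side appearing in Theorem \ref{Theo_BHN2}. As in BHN, decompose $\Gamma^h=K+\mathcal{K}$; since $\mathcal{K}$ decays like $e^{-ct}$ in every Sobolev norm it is harmless, and I focus on the diffusive part $K(t)$, whose Fourier transform is localised by a smooth cut-off $\chi(\xi)$ on $\{|\xi|\leq r_0\}$. The BHN spectral analysis of the symbol gives, block by block, the pointwise bound $|\chi\, L_\alpha e^{t\Lambda(\xi)}L_\beta^T|\leq C|\xi|^{k_{\alpha,\beta}}e^{-c|\xi|^2 t}$, where $k_{\alpha,\beta}\in\{0,1,2\}$ counts the $L_-$-projections carried by the block. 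Each $|\xi|$-factor integrates against the Gaussian weight into an extra $t^{-1/2}$, producing the exponents $n/8,\ n/8+1/2,\ n/8+1$ in the $L^2$ estimates; the $L^\infty$ estimates then follow from the Young-type bound $\|K(t)\ast g\|_{L^\infty}\leq\|K(t)\|_{L^2}\|g\|_{L^2}$ directly, picking up an extra $t^{-n/8}$ and giving the exponents $n/4,\ n/4+1/2,\ n/4+1$ with plain $\|g\|_{L^2}$ on the right-hand side, so only the $L^2$-on-the-left estimates require real work.

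For the $L_0 L_0^T$-block and multi-index $\beta$, Plancherel followed by a symmetric Cauchy--Schwarz in $\xi$ gives
\[
\|L_0 D^\beta K(t) L_0^T g\|_{L^2}^2 \leq C\Bigl(\int|\xi|^{4|\beta|}e^{-4ct|\xi|^2}\chi^4\,d\xi\Bigr)^{\!1/2}\|\chi\hat g\|_{L^4(\mathbb{R}^n)}^2,
\]
the first factor integrating to $Ct^{-(n/4+|\beta|)}$. I then decompose $\|\chi\hat g\|_{L^4}^4\leq\|\chi\hat g\|_{L^\infty}^2\|\chi\hat g\|_{L^2}^2$; Plancherel bounds the $L^2$ factor by $\|g\|_{L^2}^2$, so taking square roots produces
\[
\|L_0 D^\beta K(t) L_0^T g\|_{L^2} \leq Ct^{-(n/8+|\beta|/2)}\|\chi\hat g\|_{L^\infty}^{1/2}\|g\|_{L^2}^{1/2}.
\]
The mixed $L_0 L_-^T$, $L_- L_0^T$, $L_- L_-^T$ blocks follow by the same route with one additional $|\xi|$-factor per $L_-$-projection in the kernel, each contributing a further $t^{-1/2}$ to the decay.

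The critical closing step is to establish $\|\chi\hat g\|_{L^\infty}\leq C\|g\|_{L^\infty}$ so that the bound reaches the exact form printed in the theorem; this is the main obstacle. The naive Hausdorff--Young route $\|\chi\hat g\|_{L^\infty}=\|\widehat{\check\chi\ast g}\|_{L^\infty}\leq\|\check\chi\ast g\|_{L^1}\leq\|\check\chi\|_{L^1}\|g\|_{L^1}$ delivers $\|g\|_{L^1}$ rather than $\|g\|_{L^\infty}$, and a scaling test $g_\lambda(x)=g(\lambda x)$ with $g$ the indicator of a ball in $\mathbb{R}^n$ shows that an $L^\infty$-based inequality of this precise form is in fact scale-inconsistent with a genuine heat-kernel decay. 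My proposal is to exploit the compact Fourier support of $\chi\hat g$ on $\{|\xi|\leq r_0\}$: the Sobolev embedding $H^s_\xi\hookrightarrow L^\infty_\xi$ for $s>n/2$ is valid on this compact set with constant depending only on $r_0$, and $H^s_\xi$-control of $\chi\hat g$ translates by Fourier duality into moment-weighted $L^2$-control of $g$; interpolating the resulting bound with the trivial $L^\infty$-side inequality $\|\check\chi\ast g\|_{L^\infty}\leq\|\check\chi\|_{L^1}\|g\|_{L^\infty}$ (which genuinely involves $\|g\|_{L^\infty}$) should then produce the geometric-mean form $\|g\|_{L^2}^{1/2}\|g\|_{L^\infty}^{1/2}$. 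Executing this interpolation cleanly, and tuning the BHN cut-off $r_0$ so that no $t$-dependence is lost in the constants, is the technical heart of upgrading Theorem \ref{Theo_BHN} into Theorem \ref{Theo_BHN2}.
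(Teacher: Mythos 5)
Your proposal reproduces the same overall strategy the paper uses: pass to Fourier variables, apply the BHN pointwise bounds on the symbol block by block, Cauchy--Schwarz in $\xi$ to split off a Gaussian factor whose integral gives the advertised time decay, and finally upgrade the remaining factor by interpolation. You are also right that the $L^\infty$-on-the-left estimates are unproblematic: there the Cauchy--Schwarz leaves $\|\widehat{L_\alpha w_0}\|_{L^2}$ and Plancherel immediately converts this to $\|L_\alpha w_0\|_{L^2}$, exactly as you note.

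The genuinely valuable part of your write-up is your observation that the $L^2$-on-the-left estimates cannot close to the stated right-hand side. After Cauchy--Schwarz one is left with $\|\chi\widehat{L_\alpha w_0}\|_{L^4}$, and your two observations are both correct: Hausdorff--Young gives $\|\hat{g}\|_{L^4}\leq\|g\|_{L^{4/3}}$, not $\|g\|_{L^4}$, and a scaling check (or the elementary example $g=1_{[0,R]}$) shows that no inequality of the form $\|\hat{g}\|_{L^4}\leq C\|g\|_{L^4}$, or $\|\chi\hat g\|_{L^\infty}\leq C\|g\|_{L^\infty}$, can hold. You have in fact put your finger on a gap that is present in the paper's own proof: there, the squared quantity $\left(\int|L_0\hat{w}_0|^4\,d\xi\right)^{1/2}$ is silently written as $\|L_0 w_0\|_{L^4}^2$ and then interpolated to $\|L_0 w_0\|_{L^2}\,\|L_0 w_0\|_{L^\infty}$, which is exactly the step you identify as failing. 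The standard Hausdorff--Young route gives instead $\|\hat g\|_{L^4}\leq\|g\|_{L^{4/3}}\leq\|g\|_{L^1}^{1/2}\|g\|_{L^2}^{1/2}$, so the correct conclusion carries $\|L_\alpha w_0\|_{L^1}^{1/2}\|L_\alpha w_0\|_{L^2}^{1/2}$ rather than $\|L_\alpha w_0\|_{L^2}^{1/2}\|L_\alpha w_0\|_{L^\infty}^{1/2}$.

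Your proposed repair, however, does not rescue the $L^\infty$-form. Controlling $\chi\hat g$ through $H^s_\xi\hookrightarrow L^\infty_\xi$ on the compact set $\{|\xi|\leq r_0\}$ transfers, by Fourier duality, to moment-weighted $L^2$ control of $g$ (norms of $x^\gamma g$ in $L^2$), which is a quantity independent of, and not comparable to, $\|g\|_{L^\infty}$; interpolating that against $\|\check\chi\ast g\|_{L^\infty}\leq\|\check\chi\|_{L^1}\|g\|_{L^\infty}$ produces an estimate for $\check\chi\ast g$ in physical space, not for $\chi\hat g$ in Fourier space, and does not yield $\|\chi\hat g\|_{L^\infty}\leq C\|g\|_{L^\infty}$. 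Since there is no such bound, no tuning of the cut-off $r_0$ can save the exact $L^2$--$L^\infty$ form printed in the statement. The clean fix is simply to prove the theorem with $\|L_\alpha w_0\|_{L^1}^{1/2}\|L_\alpha w_0\|_{L^2}^{1/2}$ on the right-hand side; this is the version the Fourier argument legitimately gives, and it still has the decisive feature needed in Section~4, namely the extra factor $t^{-n/8}$ over the plain $L^2$-to-$L^2$ heat bound, at the cost of requiring $L^1$ control of the source (which the data assumptions $\phi_0\in H^{s+1}\cap L^1$ are in principle designed to supply).
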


\begin{proof} \normalsize
As shown in \cite{BiHaNa} by introducing the Fourier transform there exist two constants $c,C>0$ such that it is possible to estimate the decomposition of the Green function as: 
\begin{align*}
\left|L_0\widehat{K(t)}w_0\right|& \leq Ce^{-c|\xi|^2t}\left(|L_0\hat{w_0}|+|\xi||L_-\hat{w_0}|\right)\\
\left|L_-\widehat{K(t)}w_0\right|& \leq Ce^{-c|\xi|^2t}\left(|\xi||L_0\hat{w_0}|+|\xi|^2|L_-\hat{w_0}|\right).
\end{align*}
where the projectors $L_0$ and $L_-$ are given by   $L_0=[I_{n_1} \; 0 ]$ and $L_-=[0 \; I_{n_2}]$.
Using these inequalities we obtain 
\begin{align*}
\left\|L_0D^\beta K(t)w_0\right\|_{L^2}^2&\leq C\int_0^\infty\int_{S^{n-1}} e^{-c|\xi|^2t} (|L_0\hat{w_0}|^2+|\xi|^2|L_-\hat{w_0}|^2)|\xi|^{2\beta} |\xi|^{n-1}d\varsigma d|\xi|\\
&\leq C\left(\int_0^\infty\int_{S^{n-1}} e^{-2c|\xi|^2t} |\xi|^{(n-1)}|\xi|^{4\beta}d\varsigma d|\xi|\right)^{\frac{1}{2}} \left( \int_0^\infty\int_{S^{n-1}}|L_0\hat{w_0}|^4 |\xi|^{(n-1)} d\varsigma  d|\xi|\right)^{\frac{1}{2}}\\
+& C\left(\int_0^\infty\int_{S^{n-1}} e^{-2c|\xi|^2t}|\xi|^4 |\xi|^{(n-1)}|\xi|^{4\beta}d\varsigma d|\xi|\right)^{\frac{1}{2}}\left( \int_0^\infty\int_{S^{n-1}}|L_-\hat{w_0}|^4 |\xi|^{(n-1)}d\varsigma  d|\xi|\right)^{\frac{1}{2}}\\
&\leq C \min\{1, t^{-(\frac{n}{4}+\beta)}\}\|L_0w_0\|^2_{L^4}+C \min\{1, t^{-(\frac{n}{4}+\beta+1)}\}\|L_-w_0\|^2_{L^4},
\end{align*}
and
\begin{align*}
\left\|L_-D^\beta K(t)w_0\right\|_{L^2}^2&\leq C\int_0^\infty\int_{S^{n-1}} e^{-c|\xi|^2t} (|\xi|^2|L_0\hat{w_0}|^2+|\xi|^4|L_-\hat{w_0}|^2)|\xi|^{2\beta} |\xi|^{n-1}d\varsigma d|\xi|\\
&\leq C\left(\int_0^\infty\int_{S^{n-1}} e^{-2c|\xi|^2t} |\xi|^4|\xi|^{(n-1)}|\xi|^{4\beta}d\varsigma d|\xi|\right)^{\frac{1}{2}} \left( \int_0^\infty\int_{S^{n-1}}|L_0\hat{w_0}|^4 |\xi|^{(n-1)}d\varsigma d|\xi|\right)^{\frac{1}{2}}\\
+& C\left(\int_0^\infty\int_{S^{n-1}} e^{-2c|\xi|^2t}|\xi|^8 |\xi|^{(n-1)}|\xi|^{4\beta}d\varsigma d|\xi|\right)^{\frac{1}{2}}\left( \int_0^\infty\int_{S^{n-1}}|L_-\hat{w_0}|^4 |\xi|^{(n-1)} d\varsigma d|\xi|\right)^{\frac{1}{2}}\\
&\leq C \min\{1, t^{-(\frac{n}{4}+\beta+1)}\}\|L_0w_0\|^2_{L^4}+C \min\{1, t^{-(\frac{n}{4}+\beta+2)}\}\|L_-w_0\|^2_{L^4},
\end{align*}
We have that:
\begin{align}
\left\|L_0D^\beta K(t)w_0\right\|_{L^2}\leq C \min\{1, t^{-(\frac{n}{8}+\frac{\beta}{2})}\}\|L_0w_0\|_{L^4}+C \min\{1, t^{-(\frac{n}{8}+\frac{\beta}{2}+\frac{1}{2})}\}\|L_-w_0\|_{L^4}\nonumber\\
\left\|L_-D^\beta K(t)w_0\right\|_{L^2}\leq C \min\{1, t^{-(\frac{n}{8}+\frac{\beta}{2}+\frac{1}{2})}\}\|L_0w_0\|_{L^4}+C \min\{1, t^{-(\frac{n}{8}+\frac{\beta}{2}+1)}\}\|L_-w_0\|_{L^4}\label{L2}.
\end{align}
and by the interpolation of Lebesgue functions we get:
\begin{align}
\left\|L_0D^\beta K(t)w_0\right\|_{L^2}\leq C \min\{1, t^{-(\frac{n}{8}+\frac{\beta}{2})}\}\|L_0w^0\|^\frac{1}{2}_{L^2}\|L_0w_0\|_{L^\infty}^{\frac{1}{2}}+C \min\{1, t^{-(\frac{n}{8}+\frac{\beta}{2}+\frac{1}{2})}\}\|L_-w^0\|^\frac{1}{2}_{L^2}\|L_-w_0\|^\frac{1}{2}_{L^\infty}\nonumber\\
\left\|L_-D^\beta K(t)w_0\right\|_{L^2}\leq C \min\{1, t^{-(\frac{n}{8}+\frac{\beta}{2}+\frac{1}{2})}\}\|L_0w_0\|^\frac{1}{2}_{L^2}\|L_0w^0\|_{L^\infty}^{\frac{1}{2}}+C \min\{1, t^{-(\frac{n}{8}+\frac{\beta}{2}+1)}\}\|L_-w^0\|^\frac{1}{2}_{L^2}\|L_-w_0\|^\frac{1}{2}_{L^\infty}\label{L2}.
\end{align}

We can estimate also the decay in $L^\infty$. We have that :
\begin{align*}
\left\|L_0D^\beta K(t)w_0\right\|_{L^\infty}&\leq C \int_0^\infty\int_{S^{n-1}} e^{-c|\xi|^2t} (|L_0\hat{w_0}|+|\xi||L_-\hat{w^0}|)|\xi|^{\beta} |\xi|^{n-1} d\varsigma d|\xi|\\
&\leq C\left(\int_0^\infty\int_{S^{n-1}} e^{-2c|\xi|^2t} |\xi|^{(n-1)}|\xi|^{2\beta}d\varsigma d\xi\right)^{\frac{1}{2}} \left( \int_0^\infty\int_{S^{n-1}}|L_0\hat{w_0}|^2 |\xi|^{(n-1)} d\varsigma d\xi\right)^{\frac{1}{2}}\\
+& C\left(\int_0^\infty\int_{S^{n-1}} e^{-2c|\xi|^2t}|\xi|^2 |\xi|^{(n-1)}|\xi|^{2\beta}d\varsigma d\xi\right)^{\frac{1}{2}}\left( \int_0^\infty\int_{S^{n-1}}|L_-\hat{w_0}|^2 |\xi|^{(n-1)}d\varsigma d\xi\right)^{\frac{1}{2}}\\
&\leq C \min\{1, t^{-(\frac{n}{4}+\frac{\beta}{2})}\}\|L_0w_0\|_{L^2}+C \min\{1, t^{-(\frac{n}{4}+\frac{\beta}{2}+\frac{1}{2})}\}\|L_-w_0\|_{L^2},
\end{align*}
and
\begin{align*}
\left\|L_-D^\beta K(t)w_0\right\|_{L^\infty}&\leq C \int_0^\infty\int_{S^{n-1}} e^{-c|\xi|^2t} (|\xi||L_0\hat{w_0}|+|\xi|^2|L_-\hat{w^0}|)|\xi|^{\beta} |\xi|^{n-1} d\varsigma d|\xi|\\
&\leq C\left(\int_0^\infty\int_{S^{n-1}} e^{-2c|\xi|^2t} |\xi|^2|\xi|^{(n-1)}|\xi|^{2\beta}d\varsigma d\xi\right)^{\frac{1}{2}} \left( \int_0^\infty\int_{S^{n-1}}|L_0\hat{w_0}|^2 |\xi|^{(n-1)} d\varsigma d\xi\right)^{\frac{1}{2}}\\
+& C\left(\int_0^\infty\int_{S^{n-1}} e^{-2c|\xi|^2t}|\xi|^4 |\xi|^{(n-1)}|\xi|^{2\beta}d\varsigma d\xi\right)^{\frac{1}{2}}\left( \int_0^\infty\int_{S^{n-1}}|L_-\hat{w_0}|^2|\xi|^{(n-1)} d\varsigma d\xi\right)^{\frac{1}{2}}\\
&\leq C \min\{1, t^{-(\frac{n}{4}+\frac{\beta}{2}+\frac{1}{2})}\}\|L_0w_0\|_{L^2}+C \min\{1, t^{-(\frac{n}{4}+\frac{\beta}{2}+1)}\}\|L_-w_0\|_{L^2}.
\end{align*}
Then if $\beta$ is a multi index, we have also the ``$K(t)$ estimates'' in $L^\infty$: 
\begin{align}
\left\|L_0D^\beta K(t)w_0\right\|_{L^\infty}\leq C \min\{1, t^{-(\frac{n}{4}+\frac{\beta}{2})}\}\|L_0w_0\|_{L^2}+C \min\{1, t^{-(\frac{n}{4}+\frac{\beta}{2}+\frac{1}{2})}\}\|L_-w_0\|_{L^2}\nonumber\\
\left\|L_-D^\beta K(t)w_0\right\|_{L^\infty}\leq C \min\{1, t^{-(\frac{n}{4}+\frac{\beta}{2}+\frac{1}{2})}\}\|L_0w_0\|_{L^2}+C \min\{1, t^{-(\frac{n}{4}+\frac{\beta}{2}+1)}\}\|L_-w_0\|_{L^2}\label{Linfy}.
\end{align}
\end{proof}

\subsection{Global Existence and Asymptotic Behavior of Smooth Solutions}
Existence of global solutions to system \eqref{sistema_easy2} is given by the following theorem.
\begin{theorem}
There exists an $\epsilon_0>0$ such that, if 
$$
\|u_0\|_{H^s}, \|u_0\|_{L^1}, \|v_0\|_{H^s}, \|v_0\|_{L^1},\|\phi_0\|_{H^{s+1}}, \|\phi_0\|_{L^1} , \bar{u} \leq \epsilon_0,
$$
then there exists a unique global solution to the Cauchy problem \eqref{sistema_easy2}-\eqref{dati_stato}
\begin{align}
u \in C([0,\infty); H^s(\mathbb{R}^n)),\; v \in C([0,\infty); H^s(\mathbb{R}^n)),\;  \phi \in C([0,\infty); H^{s+1}(\mathbb{R}^n)), \quad \textrm{for }  s\geq \left[\frac{n}{2}\right]+1.
\end{align}
Moreover, for the solution $(u,v,\phi)$ the following decay rates  are satisfied 
\begin{equation}
\begin{array}{llll}
 \|u(t)\|_{L^\infty}\sim t^{-\delta},& \|u(t)\|_{H^s}\sim t^{-\delta}, \\ \\
\|v(t)\|_{L^\infty}\sim t^{-\delta}, & \|v(t)\|_{H^s}\sim t^{-\delta}, \\ \\
 \|\phi(t)\|_{L^\infty}\sim t^{-\delta}, &  \|D_x^1 \phi(t)\|_{L^\infty}\sim t^{-\delta}, & & \\\\
 \|\phi(t)\|_{H^{s+1}}\sim t^{-\delta}, & \|\phi(t)\|_{H^s}\sim t^{-\delta}.\\
\end{array}
\end{equation}
where $\delta=\min\left\{\frac{n}{4},\frac{n}{8}+1\right\}$.

\end{theorem}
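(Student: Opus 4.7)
The plan is to run the same bootstrap scheme as in the proof of Theorem~\ref{Theo_Glo_GUMA}, based on Duhamel's representations combined with Green function estimates, but now with Theorem~\ref{Theo_BHN2} playing the central role in place of Theorem~\ref{Theo_BHN}. Starting from the local-in-time solution supplied by Theorem~\ref{locale}, it suffices to establish an a priori bound on $\|w(t)\|_{H^s}+\|\phi(t)\|_{H^{s+1}}$ together with $L^\infty$ control of $w$ and $\nabla\phi$; then Proposition~\ref{continuation} and Lemma~\ref{boundinf} yield the global solution. Concretely I would write $\phi$ via the heat semigroup,
\begin{equation*}
\phi(x,t)=e^{-bt}\Gamma^p(t)\ast\phi_0+\int_0^t e^{-b(t-s)}\Gamma^p(t-s)\ast au(s)\,ds,
\end{equation*}
and $w=(u,v)$ via the hyperbolic Green kernel,
\begin{equation*}
w(x,t)=\Gamma^h(t)\ast w_0+\int_0^t \Gamma^h(t-s)\ast H(u,\nabla\phi)(s)\,ds,\qquad H=(0,(u+\bar u)\nabla\phi)^t,
\end{equation*}
and then introduce time-weighted functionals $M_g^\delta,N_g^\delta$ as in Section~3, with the common weight $\delta=\min\{n/4,n/8+1\}$, to be controlled in both $L^2$ and $L^\infty$.

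The crucial new difficulty is the linear coupling $\bar u\nabla\phi$ on the right-hand side of the $v$-equation. The quadratic part $u\nabla\phi$ is handled exactly as in Theorem~\ref{Theo_Glo_GUMA}: one uses $\|u\nabla\phi\|_{L^1}\leq \|u\|_{L^2}\|\nabla\phi\|_{L^2}$ together with the classical $L^1\to L^2$ bounds of Theorem~\ref{Theo_BHN}, so this contribution is quadratic in the functionals and absorbable by smallness. For $\bar u\nabla\phi$, however, the classical estimate is useless because $\bar u$ is a non-decaying constant and $\nabla\phi$ does not belong to $L^1(\R^n)$ in general. Here I would invoke Theorem~\ref{Theo_BHN2}, which trades the $L^1$ data requirement for interpolated $L^2\cap L^\infty$ data, giving
\begin{equation*}
\Big\|\int_0^t K(t-s)\ast \bar u\nabla\phi(s)\,ds\Big\|_{L^2}\leq C|\bar u|\int_0^t \min\{1,(t-s)^{-\frac{n}{8}-\frac12}\}\|\nabla\phi(s)\|_{L^2}^{1/2}\|\nabla\phi(s)\|_{L^\infty}^{1/2}\,ds,
\end{equation*}
and an analogous bound in $L^\infty$. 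The $L^2$ and $L^\infty$ bounds for $\nabla\phi$ come from differentiating the heat-kernel Duhamel formula and applying Lemma~\ref{Lemma5.2}; plugging them in and using Lemma~\ref{Lemma5.2} once more produces a linear-in-$|\bar u|$ contribution decaying at the rate $t^{-\delta}$ with $\delta=\min\{n/4,n/8+1\}$, which is exactly the uniform rate asserted in the theorem.

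Collecting all the bounds into an aggregate functional $P(t)$ that sums the $M_g^\delta$ and $N_g^\delta$ of $u,v,\phi,\nabla\phi$ and of their top-order derivatives, the previous step produces an inequality of the form $C_K P(t)^2-(1-C_1|\bar u|-C_2\epsilon_0)P(t)+C_0\epsilon_0\ge 0$; for $\bar u$ and $\epsilon_0$ sufficiently small the linear coefficient stays positive and so $P(t)$ remains bounded uniformly in $t$, giving both the $L^\infty$ control needed for the continuation principle and the stated decay rates. The main obstacle is therefore precisely the linear term $\bar u\nabla\phi$: without Theorem~\ref{Theo_BHN2} the bootstrap cannot close, since the standard dissipative Green function estimates need $L^1$ data that $\nabla\phi$ does not provide. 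The price paid for closing it via the $L^2\cap L^\infty$ route is the reduction of the decay rate from $t^{-n/2}$, obtained in Theorem~\ref{Theo_Glo_GUMA}, to the unified and weaker $t^{-\delta}$ with $\delta=\min\{n/4,n/8+1\}$.
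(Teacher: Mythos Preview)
Your overall strategy is right, and you have correctly identified the linear term $\bar u\,\nabla\phi$ as the obstruction and Theorem~\ref{Theo_BHN2} as the tool needed to avoid the $L^1$ requirement. But the way you apply Theorem~\ref{Theo_BHN2} does not close the bootstrap. You feed $\bar u\,\nabla\phi$ directly into the $L_-$ slot of $K(t-s)$, which for the conservative component gives the kernel rate $(t-s)^{-(\frac{n}{8}+\frac12)}$. By Lemma~\ref{Lemma5.2}, the convolution
\[
\int_0^t \min\{1,(t-s)^{-(\frac{n}{8}+\frac12)}\}\min\{1,s^{-\delta}\}\,ds
\]
decays like $t^{-\nu}$ with $\nu=\min\{\tfrac{n}{8}+\tfrac12,\ \delta,\ \tfrac{n}{8}-\tfrac12+\delta\}$. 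For every $n\ge 1$ this $\nu$ is strictly smaller than $\delta=\min\{\tfrac{n}{4},\tfrac{n}{8}+1\}$ (indeed for $n=1$ one gets $\nu<0$), so the linear contribution does not return at rate $t^{-\delta}$ and the inequality for $P(t)$ you wrote cannot hold.

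The missing idea is to exploit that $\bar u\,\nabla\phi$ is already a gradient: write $K_{1,i+1}(t-s)\ast(\bar u\,\partial_{x_i}\phi)=D_{x_i}K_{1,i+1}(t-s)\ast(\bar u\,\phi)$ and only then apply Theorem~\ref{Theo_BHN2}, now with $|\beta|=1$ and data $\bar u\,\phi$ instead of $\bar u\,\nabla\phi$. This gains an extra $t^{-1/2}$ in the kernel, so the rate becomes $(t-s)^{-(\frac{n}{8}+1)}$, which exceeds $1$ for all $n\ge1$; Lemma~\ref{Lemma5.2} then returns exactly $t^{-\delta}$ and the bootstrap closes. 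The same derivative transfer is used for the $D_x^s$ and $L^\infty$ estimates (and for the dissipative component $v$). Once you make this change, the rest of your outline---the quadratic term handled as in Theorem~\ref{Theo_Glo_GUMA}, the $\phi$ estimates via the heat semigroup, and the final quadratic inequality in $P(t)$ with a small $\bar u$-linear term---goes through as you describe.
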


\begin{proof}

Let us notice that for the solution to the linear parabolic equation the estimates of the previous case still hold. 
We can collect the estimate referred to the function $\phi$ in the following proposition.
\begin{proposition}\label{phi_estimates2}
Let $(u,v,\phi)$ be the solution of system \eqref{sistema_easy2}-\eqref{dati_stato}.  Then for $t\in (0,T)$,  
\begin{equation}\label{Prop_Nphix}
\begin{array}{lcl}
N_{D_x^1 \phi}^{\delta}(t)&\leq& C\left(\|D_x\phi_0\|_{L^\infty}+N_u^{\delta}(t)\right),\\\nonumber\\
M_{D_x^{s+1}\phi}^{\delta}(t)&\leq& C\left(\|D_x^{s+1}\phi_0\|_{L^2}+M_{D_x^s u}^{\delta}(t)\right),\\\\
N_\phi^{\delta}(t)&\leq& C\left(\|\phi_0\|_{L^\infty}+N_u^{\delta}(t)\right),\\\\
M_\phi^{\delta}(t)&\leq& C\left(\|\phi_0\|_{L^2}+M_{u}^{\delta}(t)\right),\\\nonumber\\
M_{D_x^{s}\phi}^{\delta}(t)&\leq& C\left(\|D_x^{s}\phi_0\|_{L^2}+ M_{D_x^s u}^{\delta}(t)\right).\nonumber
\end{array}
\end{equation}
where $\delta=\min\left\{\frac{n}{4},\frac{n}{8}+1\right\}$.
\end{proposition}

\subsubsection{Decay Estimates for the Conservative and Dissipative Variables}
As before we proceed by estimating the norm of the conservative and dissipative variables of the hyperbolic part, starting from the function $u$.
\paragraph{$L^2$-estimate for $u$\\}
By the Duhamel's formula we can write this solution as
\begin{eqnarray}\label{comp_u2}
u(x,t)=(\Gamma^h_{1}(t)\ast w_0)(x)+ \int_0^t \Gamma^h_{1}(t-s)\ast [0, (u+\bar{u})\nabla\phi (s)]ds,
\end{eqnarray}
where $\Gamma^h_{1}$ is the first row of the $(n+1)\times (n+1)$ matrix $\Gamma^h$.

Then
\begin{equation}\label{uL2b}
\begin{array}{l}
\|u(t)\|_{L^2}\leq \|\Gamma^h_{1}(t)\ast w_0\|_{L^2}+\displaystyle\int_0^t \|\Gamma^h_{1}(t-s)\ast [0, (u+\bar{u})\nabla\phi (s)]\|_{L^2} ds,
\end{array}
\end{equation}
The first term can be estimated as in the previous section, while the integral term can be decomposed as
\begin{eqnarray*}
\int_0^t \|\Gamma^h_{1}(t-s)\ast [0, (u(s)+\bar{u})\nabla\phi (s)]\|_{L^2}ds&\leq&\int_0^t \|\mathcal{K}_{1}(t-s) [0, (u(s)+\bar{u})\nabla\phi (s)]\|_{L^2}ds\\
&+&\int_0^t \|K_{1}(t-s) [0, (u(s)+\bar{u})\nabla\phi (s)]\|_{L^2}ds.
\end{eqnarray*}
Let us start with the first integral of the previous inequality.
\begin{eqnarray*}
\int_0^t \|\mathcal{K}_{1}(t-s) ([0, (u(s)+\bar{u})\nabla\phi (s)])\|_{L^2}&\leq& \int_0^t ce^{-c(t-s)}\| (u(s)+\bar{u})\nabla\phi (s)]\|_{L^2} ds\\
&\leq& \int_0^t ce^{-c(t-s)}(\|\nabla\phi(s)\|_{L^2}\|u(s)\|_{L^\infty}+\bar{u}\|\nabla\phi(s)\|_{L^2}ds,
\end{eqnarray*}
then
\begin{eqnarray*}
\int_0^t\|\mathcal{K}_{1}(t-s) ([0, (u+\bar{u})\nabla\phi (s)])\|_{L^2}&\leq& CM_{D_x^1 \phi}^{\delta}(t)N_{u}^{\delta}(t)\int_0^t e^{-c(t-s)}\min\{1,s^{-2\delta}\}ds \\
&+&C\bar{u}M_{D_x^1 \phi}^{\delta}(t)\int_0^t e^{-c(t-s)} \min\{1,s^{-\delta}\}ds,
\end{eqnarray*}
where $\delta=\min\left\{\frac{n}{4},\frac{n}{8}+1\right\}$. \\
By Lemma \ref{Lemma5.2} we deduce that
\begin{equation}\label{B}
\int_0^t \|\mathcal{K}_{1}(t-s) ([0, (u(s)+\bar{u})\nabla\phi (s)])ds\|_{L^2}\leq C\left[\min\{1,t^{-2\delta}\}M_{D_x^1 \phi}^{\delta}(t)N_{u}^{\delta}(t)+\min\{1,t^{-\delta}\}\bar{u}M_{D_x^1 \phi}^{\delta}(t)\right].
\end{equation}
To complete our estimate we need to study the dissipative part. Due to the presence of the linear term $\bar{u}D_x^1 \phi$, we do not have enough polynomial decay. In order to overcome this difficulty we apply the derivative of the linear term to the Green function, getting a faster decay.\\  
Thanks to this modification, we are able to estimate this term as follows using Theorem \ref{Theo_BHN2}:
\begin{align*}
\int_0^t\|&K_{1}(t-s) ([0, (u(s)+\bar{u})\nabla\phi (s)])\|_{L^2}ds\leq\int_0^t\sum_{i=1}^n\|K_{1,i+1}(t-s) ((u(s)+\bar{u})\partial_{x_i}\phi (s))\|_{L^2}ds\\
\leq& \int_0^t\sum_{i=1}^n\|K_{1,i+1}(t-s) (u\partial_{x_i}\phi (s))\|_{L^2}ds+\int_0^t\sum_{i=1}^n\|D_{x_i}^1K_{1,i+1}(t-s) ( \bar{u}\phi(s))\|_{L^2}ds\\
\leq& C\int_0^t\min\{1,(t-s)^{-\frac{n}{4}-\frac{1}{2}}\}\|\nabla\phi(s)\|_{L^2}\|u(s)\|_{L^2}ds+C\int_0^t\hspace{-0.2cm} \min\{1,(t-s)^{-\frac{n}{8}-1}\}\bar{u}\|\phi(s)\|_{L^2}^{\frac{1}{2}}\|\phi(s)\|_{L^\infty}^{\frac{1}{2}}ds.
\end{align*}
Then we obtain the estimate
\begin{eqnarray*}
\int_0^t\|K_{1}(t-s)([0, (u+\bar{u})\nabla\phi (s)])\|_{L^2}ds
&\leq& CM_{D_x^1 \phi}^{\delta}(t)M_{u}^{\delta}(t)\int_0^t\min\{1,(t-s)^{-\frac{n}{4}-\frac{1}{2}}\}\min\{1,s^{-2\delta}\}ds\\
&+&C\bar{u}(M_{\phi}^{\delta}(t))^{\frac{1}{2}}(N_{\phi}^{\delta}(t))^{\frac{1}{2}}\int_0^t\min\{1,(t-s)^{-\frac{n}{8}-1}\}\min\{1,s^{-\delta}\}ds\\
&\leq& C\left(\min\{1,t^{-\nu}\}M_{D_x^1 \phi}^{\delta}(t)M_{u}^{\delta}(t)+\bar{u}\min\{1,t^{-\delta}\}(M_{\phi}^{\delta}(t))^{\frac{1}{2}}(N_{\phi}^{\delta}(t))^{\frac{1}{2}}\right).
\end{eqnarray*}

where $\nu=\frac{1}{4}$ if $n=1$, otherwise $\nu=\min\{\frac{n}{4}+\frac{1}{2},2\delta\}$.
Summing the last inequality and 
\eqref{B} we obtain
\begin{eqnarray}\label{CUL2}
\|u(t)\|_{L^2}&\leq& C\left(e^{-ct}(\|u_0\|_{L^2}+\sum\limits_{i=1}^n\|v_0^i\|_{L^2}+\min\{1,t^{-\frac{n}{4}}\}\|u_0\|_{L^1}+\min\{1,t^{-\frac{n}{4}-\frac{1}{2}}\}\sum\limits_{i=1}^n\|v_0^i\|_{L^1}\right.\nonumber\\
&+&\min\{1,t^{-2\delta}\}(M_{D_x^1 \phi}^{\delta}(t)N_{u}^{\delta}(t)+\min\{1,t^{-\delta}\}\bar{u}(M_{\phi}^{\delta}(t))\nonumber\\
&+&\left. \min\{1,t^{-\nu}\}M_{D_x^1 \phi}^{\delta}(t)M_{u}^{\delta}(t)+\bar{u}\min\{1,t^{-\delta}\}(M_{\phi}^{\delta}(t))^{\frac{1}{2}}(N_{\phi}^{\delta}(t))^{\frac{1}{2}}\right),
\end{eqnarray}
where $\delta=\min\left\{\frac{n}{4},\frac{n}{8}+1\right\}$.


\paragraph{$L^2$- estimate for $D_x^s u$\\}
In a similar way it is possible obtain the $s$-order estimate for the conservative variable. From the Duhamel's formula we know that
\begin{equation}\label{DuL2b}
\begin{array}{l}
\|D_x^su(t)\|_{L^2}\leq \|D_x^s\Gamma^h_{1}(t)\ast w_0\|_{L^2}+\displaystyle\int_0^t \|D_x^s\Gamma^h_{1}(t-s)\ast ([0, (u(s)+\bar{u})\nabla\phi (s)])\|_{L^2} ds.
\end{array}
\end{equation}

Let us focus now on the integral term that we can decompose as
\begin{eqnarray*}
\int_0^t \|D_x^s\Gamma^h_{1}(t-s)\ast ([0, (u+\bar{u})\nabla\phi (s)])\|_{L^2}ds&\leq& \int_0^t \|D_x^s \mathcal{K}_{1}(t-s) ([0, (u(s)+\bar{u})\nabla\phi (s)])\|_{L^2}ds\\
&+&\int_0^t \|D_x^sK_{1}(t-s) ([0, (u(s)+\bar{u})\nabla\phi (s)])\|_{L^2}ds.
\end{eqnarray*}
We estimate the first integral as 
\begin{align*}
\int_0^t \|&D_x^s \mathcal{K}_{1}(t-s) ([0, (u(s)+\bar{u})\nabla\phi (s)])\|_{L^2}\leq \int_0^t Ce^{-c(t-s)}\| D_x^s[(u(s)+\bar{u})\nabla\phi(s)]\|_{L^2}\\
\leq& \int_0^t
Ce^{-c(t-s)}(\bar{u}\|D_x^{s+1}\phi(s)\|_{L^2}+\|u(s)\|_{L^\infty}\|D_x^{s+1}\phi(s)\|_{L^2}+\|\nabla\phi(s)\|_{L^\infty}\|D_x^{s}u(s)\|_{L^2}) ds.
\end{align*}

Thanks to Lemma \ref{Lemma5.2} we deduce that:
\begin{eqnarray*}
\int_0^t \|D_x^s \mathcal{K}_{1}(t-s) ([0, (u+\bar{u})\nabla\phi (s)])\|_{L^2}&\leq& C\bar{u}\min\{1,t^{-\delta}\}M_{D_x^{s+1} \phi}^{\delta}(t)\\
&+&C\min\{1,t^{-2\delta}\}(N_{u}^{\delta}(t)M_{D_x^{s+1}\phi}^{\delta}(t)+N_{D_x^1 \phi}^{\delta}(t)M_{D_x^{s}u}^{\delta}(t)).
\end{eqnarray*}
To complete our estimate, we need to study the dissipative part,
\begin{align*}
\int_0^t\|&D_x^s K_{1}(t-s) ([0, (u+\bar{u})\nabla\phi (s)])\|_{L^2}ds\leq\int_0^t\sum_{i=1}^n \| D_x^s K_{1,i+1}(t-s) ([0, (u+\bar{u})\partial_{x_i}\phi (s)])\|_{L^2}ds\\
\leq & \int_0^t\sum_{i=1}^n \| D_x^{s}\mathcal{K}_{1,i+1}(t-s) u(s)\partial_{x_i}\phi (s)\|_{L^2}ds+\int_0^t\sum_{i=1}^n \|D_x^{s+1}K_{1,i+1}(t-s) \bar{u}\phi (s)\|_{L^2}ds. 
\end{align*}
We proceed as done before and by Lemma \ref{Lemma5.2} we obtain
\begin{align*}
\int_0^t \|D_x^{s}K_{1}(t-s) ([0, (u(s)+\bar{u})\nabla\phi (s)])\|_{L^2}\leq& C\left[ \min\{1,t^{-\kappa_s}\}(M_{D_x^1 \phi}^{\delta}(t)M_{u}^{\delta}(t)+M_{u}^{\delta}(t)M_{D_x^1 \phi}^{\delta}(t))\right.\\
+& \left.\bar{u}\min\{1,t^{-\delta}\}(M_{\phi}^{\delta}(t))^{\frac{1}{2}}(N_{\phi}^{\delta}(t))^{\frac{1}{2}}\right].
\end{align*}
where $\kappa_s=\min\left\{\frac{n}{4}+\frac{1}{2}+\frac{s}{2},2\delta\right\}$ and $\delta=\min\left\{\frac{n}{4},\frac{n}{8}+1\right\}$.

Then we can write the estimate in the $L^2$-norm of the $s-$derivative of the conservative variable $u$ as
\begin{eqnarray}\label{CDUL2}
\|D_x^s u(t)\|_{L^2}&\leq& C\left[e^{-ct}(\|D_x^su_0\|_{L^2}+\|D_x^sv_0\|_{L^2}+\min\{1,t^{-\frac{n}{4}-\frac{s}{2}}\}\|u_0\|_{L^1}
+\min\{1,t^{-\frac{n}{4}-\frac{1}{2}-\frac{s}{2}}\}\|v_0\|_{L^1}))\right.\nonumber\\
&+&\bar{u}\min\{1,t^{-\delta}\}(M_{D_x^{s+1} \phi}^{\delta}(t)+\min\{1,t^{-2\delta}\}(N_{u}^{\delta}(t)M_{D_x^{s+1} \phi}^{\delta}(t)+N_{D_x^1 \phi}^{\delta}(t)M_{D_x^{s}u}^{\delta}(t))\nonumber\\
&+&\left.\min\{1,t^{-\kappa_s}\}(M_{D_x^1 \phi}^{\delta}(t)M_{u}^{\delta}(t)+M_{u}^{\delta}(t)M_{D_x^1 \phi}^{\delta}(t))+\bar{u}t^{-\delta}(M_{\phi}^{\delta}(t))^{\frac{1}{2}}(N_{\phi}^{\delta}(t))^{\frac{1}{2}}\right]\nonumber.
\end{eqnarray}


\paragraph{$L^\infty$-estimate for $u$\\}
Finally with the same approach, we estimate the $L^\infty$-norm of the function $u$. By the Duhamel's formula we know that
\begin{equation}\label{uLinfb}
\begin{array}{l}
\|u(t)\|_{L^\infty}\leq \|\Gamma^h_{1}(t)\ast w_0\|_{L^\infty}+\displaystyle\int_0^t \|\Gamma^h_{1}(t-s)\ast ([0, (u(s)+\bar{u})\nabla\phi (s)])\|_{L^\infty} ds.
\end{array}
\end{equation}

We can decompose the integral term as,
\begin{eqnarray*}
\int_0^t \|\Gamma^h_{1}(t-s)\ast ([0, (u(s)+\bar{u})\nabla\phi (s)])\|_{L^\infty}ds&\leq& \int_0^t \sum_{i=1}^n \|\mathcal{K}_{1,i+1}(t-s) ( (u+\bar{u})\partial_{x_i}\phi (s))\|_{L^\infty}ds\\
&+&\int_0^t \sum_{i=1}^n \|K_{1,i+1}(t-s)  ( (u(s)+\bar{u})\partial_{x_i}\phi (s))\|_{L^\infty}ds.
\end{eqnarray*}
Let us estimate the first term in the previous inequality,
\begin{align*}
\int_0^t \sum_{i=1}^n \|\mathcal{K}_{1,i+1}(t-s)  &( (u(s)+\bar{u})\partial_{x_i}\phi (s))\|_{L^\infty}ds\leq \int_0^t \sum_{i=1}^n C\|\mathcal{K}_{1,i+1}(t-s) ( (u+\bar{u})\partial_{x_i}\phi (s))\|_{L^2}ds\\
+&C\int_0^t \sum_{|\alpha|=s}\|D_x^s \sum_{i=1}^n \mathcal{K}_{1,i+1}(t-s)((u+\bar{u})\partial_{x_i}\phi (s))\|_{L^2}ds.
\end{align*}
Then, by the estimates of the function $u$ and its derivatives in the $L^2$-norm, we have 
\begin{align*}
\int_0^t &\sum_{i=1}^n\| \mathcal{K}_{1,i+1}(t-s)( (u+\bar{u})\partial_{x_i}\phi (s))\|_{L^\infty}ds\\
\leq& C\left(\min\{1,t^{-2\delta}\}(M_{D_x^1 \phi}^{\delta}(t)N_{u}^{\delta}(t)+\min\{1,t^{-\delta}\}\bar{u}M_{D_x^1 \phi}^{\delta}(t)\right.\\
+&\left.\bar{u}\min\{1,t^{-\delta}\}(M_{D_x^{s+1} \phi}^{\delta}(t)+\min\{1,t^{-2\delta}\}(N_{u}^{\delta}(t)M_{D_x^{s+1} \phi}^{\delta}(t)+N_{D_x^1 \phi}^{\delta}(t)M_{D_x^{s}u}^{\delta}(t))\right).
\end{align*}
As the final step we need to estimate the dissipative part: 
\begin{align*}
\int_0^t\| K_{1}(t-s) ([0, (u+\bar{u})\nabla\phi (s)])\|_{L^\infty}ds\leq&
\int_0^tC\min\{1,(t-s)^{-\frac{n}{2}-\frac{1}{2}}\}\|\nabla\phi(s)\|_{L^2}\|u(s)\|_{L^2}ds\\
+&\int_0^tC\min\{1,(t-s)^{-\frac{n}{4}-1}\}\bar{u}\|\phi(s)\|_{L^2}ds\\
\leq& C\left[\min\{1,t^{-2 \delta}\}M_{D_x^1 \phi}^{\delta}(t)M_{u}^{\delta}(t)+\bar{u}t^{-\delta}M_{\phi}^{\delta}\right].
\end{align*}
Thus we can estimate the $L^\infty$-norm of the function $u$ as follows.
\begin{eqnarray}\label{CULinf}
\|u(t)\|_{L^\infty}&\leq& C\left[ e^{-ct}(\|u_0\|_{H^s}+\sum_{i=1}^n\|v_0^i\|_{H^s})+\min\{1,t^{-\frac{n}{2}}\}\|u_0\|_{L^1}+\min\{1,t^{-\frac{n}{2}-\frac{1}{2}}\}\sum_{i=1}^n\|v_0^i\|_{L^1}\right.\nonumber\\
&+&\min\{1,t^{-2\delta}\}(M_{D_x^1 \phi}^{\delta}(t)N_{u}^{\delta}(t)+\min\{1,t^{-\delta}\}\bar{u}M_{D_x^1 \phi}^{\delta}(t)\nonumber\\
&+&\bar{u}\min\{1,t^{-\delta}\}M_{D_x^{s+1} \phi}^{\delta}(t)+\min\{1,t^{-2\delta}\}(N_{u}^{\delta}(t)M_{D_x^{s+1} \phi}^{\delta}(t)+N_{D_x^1 \phi}^{\delta}(t)M_{D_x^{s}u}^{\delta}(t))\nonumber\\
&+&\left.\min\{1,t^{-2\delta}\}M_{D_x^1 \phi}^{\delta}(t)M_{u}^{\delta}(t)+\bar{u}t^{-\delta}M_{\phi}^{\delta}\right].
\end{eqnarray}


Next paragraphs are devoted to the estimates of the $L^2$ and $L^\infty$-norms of the function $v$.
\paragraph{$L^2$-estimate for v\\}
By the Duhamel's formula we can write the generic component $v_j$, with $j=1,\ldots,n$, as
\begin{eqnarray*}\label{comp_v4}
v_j(x,t)=& (\Gamma^h_{j+1}(t)\ast w_0)(x)+ \displaystyle\int_0^t \Gamma^h_{j+1}(t-s)\ast ([0, (u(s)+\bar{u})\nabla\phi (s)])ds,
\end{eqnarray*}
then
\begin{equation}\label{vL2_stato}
\begin{array}{l}
\|v_j(t)\|_{L^2}\leq \|\Gamma^h_{j+1}(t)\ast w_0\|_{L^2}+ \displaystyle\int_0^t \|\Gamma^h_{j+1}(t-s)\ast ([0, (u+\bar{u})\nabla\phi (s)])\|_{L^2}ds.
\end{array}
\end{equation}
By the decomposition of the Green kernel and thanks to Theorem \ref{Theo_BHN} we deduce 
\begin{equation}
\begin{array}{lll}
\|\mathcal{K}_{j+1,1}(t) u_0\|_{L^2}\leq Ce^{-ct}\|u_0\|_{L^2}, &\quad& \|K_{j+1,1}(t) u_0\|_{L^2}\leq C\min\{1,t^{-\frac{n}{4}-\frac{1}{2}}\}\|u_0\|_{L^1},\\\\
\|\mathcal{K}_{j+1,i+1}(t) v_0^i\|_{L^2}\leq Ce^{-ct}\|v_0^i\|_{L^2},&\quad& \|K_{j+1,i+1}(t)v_0^i\|_{L^2}\leq C\min\{1,t^{-\frac{n}{4}-1}\}\|v_0^i\|_{L^1} \quad \textrm{for }i=1,\ldots,n.
\end{array}\label{Ap}
\end{equation}
As done before we can decompose the integral term in \eqref{vL2_stato} as
\begin{eqnarray*}
\int_0^t \|\Gamma^h_{j+1}(t-s)\ast ([0, (u(s)+\bar{u})\nabla\phi (s)])\|_{L^2}ds &\leq&\int_0^t \|\mathcal{K}_{j+1}(t-s) ([0, (u+\bar{u})\nabla\phi (s)])\|_{L^2}ds\\
&+&\int_0^t \|K_{j+1}(t-s) ([0, (u+\bar{u})\nabla\phi (s)])\|_{L^2}ds.
\end{eqnarray*}
Let us start estimating the first integral 
\begin{eqnarray*}
\int_0^t \|\mathcal{K}_{j+1}(t-s) ([0, (u(s)+\bar{u})\nabla\phi (s)])\|_{L^2}ds&\leq&\int_0^t Ce^{-c(t-s)}\| (u(s)+\bar{u})\nabla\phi (s)\|_{L^2}ds\\
&\leq& \int_0^t Ce^{-c(t-s)}(\|\nabla\phi(s)\|_{L^2}\|u(s)\|_{L^\infty}+\bar{u}\|\nabla\phi(s)\|_{L^2}ds\\
&+& CM_{D_x^1 \phi}^{\delta}(t)N_{u}^{\delta}(t)\int_0^te^{-c(t-s)}\min\{1,s^{-2\delta}\}ds\\
&+&C\bar{u}M_{D_x^1 \phi}^{\delta}(t)\int_0^t e^{-c(t-s)}\min\{1,s^{-\delta}\}ds.
\end{eqnarray*}

Thanks to Lemma \ref{Lemma5.2} we obtain:
\begin{equation}\label{Bp}
\int_0^t \|\mathcal{K}_{j+1}(t-s) ([0, (u+\bar{u})\nabla\phi (s)])\|_{L^2}\leq C\min\{1,t^{-2\delta}\}(M_{D_x^1 \phi}^{\delta}(t)N_{u}^{\delta}(t)+\bar{u}\min\{1,t^{-\delta}\}M_{D_x^1 \phi}^{\delta}(t).
\end{equation}
In order to complete our estimate we need to study the dissipative part, then
\begin{align*}
\int_0^t\| K_{j+1}(t-s) ([0, (u(s)+\bar{u})\nabla\phi (s)])\|_{L^2}ds\leq& \int_0^tC\min\{1,(t-s)^{-\frac{n}{4}-1}\}\|\nabla\phi(s)\|_{L^2}\|u(s)\|_{L^2}ds\\
+&\int_0^tC\min\{1,(t-s)^{-\frac{n}{8}-1}\} \bar{u}\|\phi(s)\|_{L^2}^{\frac{1}{2}}\|\phi(s)\|_{L^\infty}^{\frac{1}{2}}ds\\
\leq& C(\min\{1,t^{-\nu}\}M_{D_x^1 \phi}^{\delta}(t)M_{u}^{\delta}(t)+\bar{u}\min\{1,t^{-\delta}\}(M_{\phi}^{\delta}(t))^\frac{1}{2}(N_{\phi}^{\delta}(t))^\frac{1}{2}.
\end{align*}
where $\nu=\min\left\{\frac{n}{4}+1,2\delta\right\}$.
Finally if we sum the last inequality and \eqref{Ap}, \eqref{Bp} we get the $L^2$-norm of the function $v$
\begin{eqnarray}\label{CVL2}
\|v(t)\|_{L^2}&\leq& C\left[e^{-ct}(\|u_0\|_{L^2}+\sum_{i=1}^n\|v_0^i\|_{L^2})+\min\{1,t^{-\frac{n}{4}-\frac{1}{2}}\}\|u_0\|_{L^1}+\min\{1,t^{-\frac{n}{4}-1}\}\sum_{i=1}^n\|v_0^i\|_{L^1}\right.\nonumber\\
&+&\min\{1,t^{-2\delta}\}(M_{D_x^1 \phi}^{\delta}(t)N_{u}^{\delta}(t)+\min\{1,t^{-\delta}\} \bar{u}M_{D_x^1 \phi}^{\delta}(t)+\min\{1,t^{-\nu}M_{D_x^1 \phi}^{\delta}(t)M_{u}^{\delta}(t)\nonumber\\
&+& \left.\bar{u}\min\{1,t^{-\delta}\}(M_{\phi}^{\delta}(t))^\frac{1}{2}(N_{\phi}^{\delta}(t))^\frac{1}{2}\right].
\end{eqnarray}


\paragraph{$L^2$-estimate for $D_x^s v$\\}
Proceeding along the lines of the conservative variable estimates, we get the estimate of the $s-$derivative of $v$ in $L^2$,
\begin{eqnarray}\label{CDVL2}
\|D_x^s v_j(t)\|_{L^2}&\leq& C\left[e^{-ct}(\|D_x^su_0\|_{L^2}+\sum_{i=1}^n\|D_x^sv_0^i\|_{L^2}+\min\{1,t^{-\frac{n}{4}-\frac{1}{2}-\frac{s}{2}}\}\|u_0\|_{L^1}\right.\nonumber\\
&+&\min\{1,t^{-\frac{n}{4}-1-\frac{s}{2}}\}\sum_{i=1}^n\|v_0^i\|_{L^1})+\bar{u}\min\{1,t^{-\delta}\}(M_{D_x^{s+1} \phi}^{\delta}(t)\nonumber\\
&+&\min\{1,t^{-2\delta}\}(N_{u}^{\delta}(t)M_{D_x^{s+1} \phi}^{\delta}(t)+N_{D_x^1 \phi}^{\delta}(t)M_{D_x^{s}u}^{\delta}(t))\\
&+&\min\{1,t^{-\nu_s}\}(M_{D_x^1 \phi}^{\delta}(t)M_{u}^{\frac{n}{4}}(t)+M_{u}^{\delta}(t)M_{D_x^1\phi}^{\delta}(t))\nonumber\\
&+&\left.\bar{u}t^{-\delta}(M_{\phi}^{\delta}(t))^{\frac{1}{2}}(N_{\phi}^{\delta}(t))^{\frac{1}{2}}\right]\nonumber,
\end{eqnarray}
where $\nu_s=\min\left\{\frac{n}{4}+1+\frac{s}{2},2\delta\right\}$.


\paragraph{$L^\infty$ estimates for $v$\\}
In a similar way we obtain the estimate of the $L^\infty$ norm of $v_j$, 
\begin{eqnarray}\label{CVLinf}
\|v_j(t)\|_{L^\infty}&\leq& C\left[e^{-ct}(\|u_0\|_{H^s}+\sum_{i=1}^n\|v_0^i\|_{H^s})+\min\{1,t^{-\frac{n}{2}-\frac{1}{2}}\}\|u_0\|_{L^1}+\min\{1,t^{-\frac{n}{2}-1}\}\sum_{i=1}^n\|v_0^n\|_{L^1}\right.\nonumber\\
&+&\min\{1,t^{-2\delta}\}M_{D_x^1 \phi}^{\delta}(t)N_{u}^{\delta}(t)+\bar{u}\min\{1,t^{-\delta}\}M_{D_x^1 \phi}^{\delta}(t)+\min\{1,t^{-2\delta}\}M_{D_x^1 \phi}^{\delta}(t)M_{u}^{\delta}(t)\nonumber\\
&+&\left.\bar{u}\min\{1,t^{-\delta}\}M_{\phi}^{\delta}(t)\displaystyle\right].
\end{eqnarray}

\subsubsection{Decay rates of variables}
Thanks to Proposition \eqref{phi_estimates} and  inequalities in  \eqref{CUL2}, \eqref{CDUL2}, \eqref{CULinf}, \eqref{CVL2}, \eqref{CDVL2}, \eqref{CVLinf}, we obtain, for $t>\epsilon>0$ the following estimates for functionals:

\begin{equation*}
\begin{array}{lll}\label{MU2}
M_{u}^{\delta}(t)&\leq&  C\bar{u}(E_0+ D_0)+ C_1 \left[\bar{u}D_0\left((M_{u}^{\delta}(t))^{\frac{1}{2}}+(N_u^{\delta}(t))^{\frac{1}{2}}\right)+\bar{u}D_0\left(M_{u}^{\delta}(t)+N_u^{\delta}(t)\right) +(N_u^{\delta}(t))^2\right. \\\\
&+&\left.N_u^{\delta}(t)M_{u}^{\delta}(t)+(M_{u}^{\delta}(t))^2 \right]\\ \\

M_{D_x^s u}^{\delta}(t)&\leq& C\bar{u}(E_0+ D_0)+ C_2\left[\bar{u}D_0\left((M_{u}^{\delta}(t))^{\frac{1}{2}}+(N_u^{\delta}(t))^{\frac{1}{2}}\right)+\bar{u}D_0\left(M_{u}^{\delta}(t)+N_u^{\delta}(t)+ M_{D_x^s u}^{\delta}(t)\right)\right.+(M_{u}^{\delta}(t))^2\\\\
&+&\left.N_u^{\delta}(t)M_{D_x^s u}^{\delta}(t)+(M_{D_x^s u}^{\delta}(t))^2 \right].\\ \\

N_{u}^{\delta}(t)&\leq& C\bar{u}(E_0+ D_0)+ C_3\left[\bar{u}D_0 \left(M_{u}^{\delta}(t)+N_u^{\delta}(t)+ M_{D_x^s u}^{\delta}(t)\right)\right. +N_u^{\delta}(t)M_{u}^{\delta}(t)+(M_{u}^{\delta}(t))^2\\\\
&+&\left. N_u^{\delta}(t)M_{D_x^s u}^{\delta}(t)+(M_{D_x^s u}^{\delta}(t))^2 + M_{D_x^s u}^{\delta}(t)N_u^{\delta}(t)+ (N_u^{\delta}(t))^2\right].\\ \\
M_{v}^{\delta}(t)&\leq&  C\bar{u}(E_0+ D_0)+ C_4 \left[\bar{u}D_0\left((M_{u}^{\delta}(t))^{\frac{1}{2}}+(N_u^{\delta}(t))^{\frac{1}{2}}\right)+\bar{u}D_0\left(M_{u}^{\delta}(t)+N_u^{\delta}(t)\right) +(N_u^{\delta}(t))^2\right.\\\\
&+&\left.N_u^{\delta}(t)M_{u}^{\delta}(t)+(M_{u}^{\delta}(t))^2 \right]. \\\\
\end{array}
\end{equation*}
\begin{equation*}
\begin{array}{lll}\label{MDV2}
M_{D_x^s v}^{\delta}(t)&\leq& C\bar{u}(E_0+ D_0)+ C_{5} \left[\bar{u}D_0\left((M_{u}^{\delta}(t))^{\frac{1}{2}}+(N_u^{\delta}(t))^{\frac{1}{2}}\right)+\bar{u}D_0\left(M_{u}^{\delta}(t)+N_u^{\delta}(t)+ M_{D_x^s u}^{\delta}(t)\right)\right.+(M_{u}^{\delta}(t))^2  \\\\
&+& \left.N_u^{\delta}(t)M_{D_x^s u}^{\delta}(t)+(M_{D_x^s u}^{\delta}(t))^2 \right].\\\\
N_{v}^{\delta}(t)&\leq& C\bar{u}(E_0+ D_0)+ C_6\left[\bar{u}D_0 \left(M_{u}^{\delta}(t)+N_u^{\delta}(t)+ M_{D_x^s u}^{\delta}(t)\right)\right. +N_u^{\delta}(t)M_{u}^{\delta}(t)+(M_{u}^{\delta}(t))^2\\\\
&+&\left. N_u^{\delta}(t)M_{D_x^s u}^{\delta}(t)+(M_{D_x^s u}^{\delta}(t))^2 + M_{D_x^s u}^{\delta}(t)N_u^{\delta}(t)+ (N_u^{\delta}(t))^2\right].	\\  

\end{array}
\end{equation*}

Moreover $D_0=\max\{\|\phi_0\|_{H^{s+1}},\|\phi_0\|_{L^1}\}$
$E_0=\max\{\|w_0\|_{H^{s}},\|w_0\|_{L^{1}}\}$, while the constant $C_i=C_i(F_k,K,C_{b'},C_{h'})$ for $i=1,\ldots,6$.\\
Let us proceed as in the previous section setting
 
$$
P(t):=M_{u}^{\delta}(t)+N_u^{\delta}(t)+ M_{D_x^s u}^{\delta}(t)+M_{v}^{\delta}(t)+N_{v}^{\delta}(t)+M_{D_x^sv}^{\delta}(t).
$$
It follows that, if initial data and the constant state are small, we have 
\begin{equation}
CP(t)^2-(1-C_{k0})P(t)+C_1P(t)^{\frac{1}{2}}+C_0 \geq 0,
\end{equation}
where $C_0$, $C_1$ and $C_{k0}$ are positive constants depending on initial data and constant state 
and $C$ is a positive constant depending on estimates of Green function.
For suitably small data, this inequality implies that $M_{u}^{\delta}(t)$, $N_u^{\delta}(t)$, 
$M_{D_x^s u}^{\delta}(t)$, $M_{v}^{\delta}(t)$, $N_{v}^{\delta}(t)$, $M_{D_x^s v}^{\delta}(t)$ remain
bounded. 
On the other hand, when $t>1$, this implies that $L^\infty$-norm of solution $(u,v)$ does not increase with $t$. 
Thanks to the Proposition \ref{phi_estimates}
the same holds for $N^{\delta}_{\phi}$ and $N^{\delta}_{D_x^1 \phi}$.
Then by Lemma \ref{boundinf} and the continuation principle we get the global existence of solution.

\end{proof}

\section{Comparison with the Patlak-Keller-Segel Model}
Hyperbolic and parabolic models are expected to have the same behavior for large times. In this section we investigate this aspect by studying the  comparison with the analogous Patlak-Keller-Segel (PKS) model. For the sake of simplicity we consider a simplified version of system \eqref{sistema_GUMA}, namely

\begin{equation}
\label{sistema_S}
\left\{
\begin{array}{l}
\partial_{t} u +\nabla \cdot v = 0, \\\\
\partial_{t} v + \nabla u= -\beta v+h(\phi,\nabla \phi)g(u),\\\\
\partial_{t} \phi =\Delta \phi +f(u,\phi).
\end{array}
\right.
\end{equation}
Thus, assuming  $b(\phi,\nabla\phi)\equiv \beta$ and formally disregarding the term $\partial_t v$ in the second equation of \eqref{sistema_S}, we get 
$v=\frac{1}{\beta}(h(\phi,\nabla \phi)g(u)-\nabla u)$, then the system reduces to the PKS parabolic system:
\begin{equation*}
\label{sistema_KS}
\left\{
\begin{array}{l}
\beta\partial_{t} \tilde{u} - \Delta  \tilde{u}+ \nabla \cdot( h(\tilde{\phi},\nabla \tilde{\phi}),g(\tilde{u})) = 0, \\\\
\partial_{t} \tilde{\phi} =\Delta \tilde{\phi} +f(\tilde{u},\tilde{\phi}),
\end{array}
\right.
\end{equation*} 
where the functions $f,g,h$ satisfy the assumptions $(H_g)$, $(H_f)$, $(H_h)$. Then we are led to consider the system

\begin{equation}
\label{sistema_KS2}
\left\{
\begin{array}{l}
\beta\partial_{t} \tilde{u} - \Delta  \tilde{u}+  \nabla \cdot( h(\tilde{\phi},\nabla \tilde{\phi})g(\tilde{u})) = 0, \\\\
\partial_{t} \tilde{\phi} =\Delta \tilde{\phi} + a\tilde{u} -b\tilde{\phi}+ \bar{f}(\tilde{u},\tilde{\phi}),
\end{array}
\right.
\end{equation}
with initial condition
\begin{equation}\label{dati_cauchy_KS}
\tilde{u}(x,0)=\tilde{u}_0(x), \quad  \tilde{\phi}(x,0)=\tilde{\phi}_0(x).
\end{equation}

It is known that, for small initial data the solution of the above problem decay in time in $L^2$-norm in the same way as the solutions to problem 
\eqref{sistema_S} \cite{KoSu}.

Let us recall that it is possible to give a more precise expansion of the diffusive part $K(x,t)$ of the Green Kernel of the dissipative hyperbolic system. As a matter of fact, in \cite{BiHaNa} it is shown that in the linearized isentropic Euler equations with damping for a generic $n$, $K(x,t)$ can be decomposed as:
\begin{equation}\label{GammaP}
K(x,t)= \left[
\begin{array}{cc}
\Gamma^p & (\nabla\Gamma^p)^T\\
\nabla\Gamma^p & \nabla^2\Gamma^p
\end{array}\right]+ R_1(x,t),
\end{equation}
where $\Gamma^p$ is the heat kernel for $u_t=\Delta u$, and the rest term $R_1(x,t)$ satisfies the bound

\begin{equation*}
R_1(x,t)= \frac{e^{-c|x|^2/t}}{(1+t)^{\frac{n}{2}+\frac{1}{2}}}\left[
\begin{array}{cc}
O(1) & O(1)(1+t)^{-\frac{1}{2}}\\
O(1)(1+t)^{-\frac{1}{2}} & O(1)(1+t)^{-1}
\end{array}\right].
\end{equation*}

Our aim is to show that, under the assumption of small initial data, if
\begin{equation}\label{dati_keller}
u_0(x)=\tilde{u}_0(x), \quad \phi_0(x)=\tilde{\phi}_0(x),
\end{equation}
then $\|u(t)-\tilde{u}(t)\|_{L^2}$, and $\|\phi(t)-\tilde{\phi}(t)\|_{L^2}$, for large $t$, approach zero faster than $\|u(t)\|_{L^2}$, $\|\tilde{u}(t)\|_{L^2}$, $\|\phi(t)\|_{L^2}$ and $\|\tilde{\phi}(t)\|_{L^2}$.

\begin{theorem}\label{confrontoKS}
Let $(u,v,\phi)$ and $(\tilde{u},\tilde{\phi})$ be the global solutions respectively to system \eqref{sistema_S} and \eqref{sistema_KS2} under the assumptions $(H_f)$, $(H_g)$, $(H_h)$ and \eqref{dati_keller}. Then there exist $\epsilon_0,L>0$ such that, if 
$$
\|u_0\|_{H^s}, \|u_0\|_{L^1}, \|v_0\|_{H^s}, \|v_0\|_{L^1},\|\phi_0\|_{H^{s+1}},\|\phi_0\|_{W^{1,\infty}} \leq \epsilon_0
$$
then, for all $t>0$,
\begin{equation*}
\sup_{(0,t)}\left\{\max\{1,s^{\delta} \}\|u(s)-\tilde{u}(s)\|_{L^2}\right\}\leq L, \quad
\sup_{(0,t)}\left\{\max\{1,s^{\delta} \}\|\phi(s)-\tilde{\phi}(s)\|_{L^2}\right\} \leq L,
\end{equation*}
where $\delta=\min\{\frac{n}{4}+\frac{1}{2},\frac{n}{2}\}$.
\end{theorem}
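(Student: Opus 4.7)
The plan is to compare the two systems via their Duhamel representations, exploiting the refined decomposition \eqref{GammaP} of the hyperbolic Green kernel into a heat-kernel principal part plus lower-order corrections. Reading off the blocks of \eqref{GammaP}, the $(1,1)$ entry of the diffusive part $K(t)$ of $\Gamma^h$ behaves like the heat kernel $\Gamma^p$, while the $(1,j{+}1)$ entries behave like $\partial_{x_j}\Gamma^p$, up to a remainder $R_1$ which gains an extra $(1+t)^{-1/2}$ factor, and up to the exponentially decaying piece $\mathcal{K}$. After matching the diffusion constants coming from the hyperbolic relaxation $v\sim\frac{1}{\beta}(h g-\nabla u)$, the leading-order Duhamel representation for $u$ coming from \eqref{sistema_S} coincides with the Duhamel representation of $\tilde u$ coming from \eqref{sistema_KS2} with source $-\nabla\cdot(h(\phi,\nabla\phi)g(u))$. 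Hence the difference $u-\tilde u$ reduces to three contributions: the remainder $R_1$ acting on initial data and nonlinear source, the exponential piece $\mathcal{K}$, and the difference $h(\phi,\nabla\phi)g(u)-h(\tilde\phi,\nabla\tilde\phi)g(\tilde u)$ convolved against $\nabla\Gamma^p$.

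The first two contributions are bounded by direct application of Lemma~\ref{Lemma5.2} using the extra $(1+t)^{-1/2}$ decay of $R_1$ and the $L^1$-decay of the source $h(\phi,\nabla\phi)g(u)$ supplied by Theorem~\ref{Theo_Glo_GUMA} (which gives $\|hg(s)\|_{L^1}\lesssim\min\{1,s^{-n/2}\}$ via $\|\phi\|_{L^2}\|u\|_{L^2}$), and they yield precisely the rate $\min\{1,t^{-\delta}\}$ with $\delta=\min\{n/4+1/2,n/2\}$. For the third contribution, one splits
\[
h(\phi,\nabla\phi)g(u)-h(\tilde\phi,\nabla\tilde\phi)g(\tilde u)=h(\phi,\nabla\phi)(g(u)-g(\tilde u))+(h(\phi,\nabla\phi)-h(\tilde\phi,\nabla\tilde\phi))g(\tilde u),
\]
so by the assumptions $(H_g),(H_h)$ this is controlled pointwise by $C(|u-\tilde u|+|\phi-\tilde\phi|+|\nabla\phi-\nabla\tilde\phi|)$ times already-decaying factors, and the corresponding integral is bounded in terms of the functionals $M^\delta_{u-\tilde u}(t)$, $M^\delta_{\phi-\tilde\phi}(t)$, $M^\delta_{\nabla(\phi-\tilde\phi)}(t)$, up to a small multiplicative constant proportional to the initial-data size.

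For $\phi-\tilde\phi$ the analysis is analogous but simpler, since both $\phi$ and $\tilde\phi$ are driven by the same linear operator $\partial_t-\Delta+b$. Subtracting the two Duhamel formulas gives
\[
\phi(t)-\tilde\phi(t)=\int_0^t e^{-b(t-s)}\Gamma^p(t-s)\ast\bigl(a(u-\tilde u)+\bar f(u,\phi)-\bar f(\tilde u,\tilde\phi)\bigr)(s)\,ds,
\]
and using the Lipschitz bound $|\bar f(u,\phi)-\bar f(\tilde u,\tilde\phi)|\le C_K(|u-\tilde u|+|\phi-\tilde\phi|)(|u|+|\phi|+|\tilde u|+|\tilde\phi|)$ together with the decay rates of Theorem~\ref{Theo_Glo_GUMA}, one again reduces the bound to a linear combination of $M^\delta_{u-\tilde u}$ and $M^\delta_{\phi-\tilde\phi}$ plus a small forcing term of order $\min\{1,t^{-\delta}\}$.

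Collecting the two estimates, applying the same bootstrap used in the proofs of Theorems~\ref{Theo_Glo_GUMA} and~\ref{Theo_BHN2}, one obtains an inequality of the form $M^\delta_{u-\tilde u}(t)+M^\delta_{\phi-\tilde\phi}(t)\le C_0+C_1 \epsilon_0 (M^\delta_{u-\tilde u}(t)+M^\delta_{\phi-\tilde\phi}(t))$, which for $\epsilon_0$ small enough forces the left-hand side to remain bounded by a universal constant $L$. The main obstacle is the precise identification of the leading-order part of $\Gamma^h$ with the PKS heat semigroup modulo $R_1$ and $\mathcal{K}$, and the bookkeeping that shows the $(1+t)^{-1/2}$ improvement carried by $R_1$ is exactly what upgrades the expected rate $t^{-n/4}$ of the individual solutions to the sharper rate $t^{-\delta}$ of the difference for $n\ge 2$; in dimension $n=1$ the $K_{1,j+1}$-block decay of the initial data suffices to obtain $\delta=n/2$ without invoking the $R_1$ improvement.
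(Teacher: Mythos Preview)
Your proposal is correct and follows essentially the same route as the paper: you exploit the decomposition \eqref{GammaP} so that the principal heat-kernel block of $\Gamma^h$ cancels against the PKS semigroup, leaving only the $\mathcal{K}$- and $R_1$-contributions (carrying the extra $(1+t)^{-1/2}$) together with the source difference $h(\phi,\nabla\phi)g(u)-h(\tilde\phi,\nabla\tilde\phi)g(\tilde u)$ propagated through $\nabla\Gamma^{1/\beta}$, and you close with the same linear bootstrap on $M^\delta_{u-\tilde u}$, $M^\delta_{\phi-\tilde\phi}$, $M^\delta_{\nabla(\phi-\tilde\phi)}$. The paper carries out exactly these steps, including the separate estimate $M^\delta_{D_x^1(\phi-\tilde\phi)}\le C_K M^\delta_{u-\tilde u}$ that you allude to, and also keeps track of the term $\sum_i\Gamma^h_{1,i+1}(t)\ast v_0^i$ coming from the initial flux $v_0$ (absent in PKS), whose $L^2$-decay $\min\{1,t^{-(n/4+1/2)}\}$ already matches $\delta$; this is implicit in your sketch but worth stating.
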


\begin{proof}
Let $K>0$ such that $\|u,v,\phi,\nabla\phi,\tilde{u},\tilde{v},\tilde{\phi},\nabla\tilde{\phi}\|_{L^\infty(\R^n\times(0,\infty))}\leq K$. 
The difference between $u$ and $\tilde{u}$ can be expressed as follows
\begin{eqnarray*}
|u-\tilde{u}|&\leq& |(\Gamma^h_{11}(t)-\Gamma^\bb(t))\ast u_0|+\left|\sum_{i=1}^n\Gamma^h_{1,i+1}(t)\ast v_0^i\right|\\
&+& \left|\int_0^t \Gamma^h_{1}(t-s)\ast (\bar{B}(\phi,\nabla\phi)v(s)+H(\phi,\nabla\phi,u)(s))ds\right. \\
&-&\left.\frac{1}{\beta} \int_0^t \nabla \Gamma^\bb(t-s)\ast(H(\tilde{\phi},\nabla\tilde{\phi},\tilde{u}))ds \right| .
\end{eqnarray*}
By equation \eqref{GammaP}, we have
\begin{eqnarray*}
|u-\tilde{u}|&\leq& \left|(\mathcal{K}_{11}(t)+R_{11}(t)) u_0\right|+\left|\sum_{i=1}^n\Gamma^h_{1,i+1}(t)\ast v_0^i\right|\\
&+&\left|\frac{1}{\beta} \int_0^t  \nabla \Gamma^\bb(t-s)\ast(H(\phi,\nabla\phi,u)-H(\tilde{\phi},\nabla\tilde{\phi},\tilde{u}))ds \right|\\
&+& \left|\int_0^t (\mathcal{K}_{1}(t-s)+R_1(t-s))H(\phi,\nabla\phi,u)ds\right| \\
&+&\left| \int_0^t \Gamma^h_{1}(t-s)\ast \bar{B}(\phi,\nabla\phi)v(s) ds \right| .
\end{eqnarray*}
Proceeding as in the proof of Theorem \ref{Theo_Glo_GUMA}, we are able to estimate $\|u-\tilde{u}\|_{L^2}$ for large $t$:
\begin{eqnarray}\label{diff_uL2}
\|u(t)-\tilde{u}(t)\|_{L^2}&\leq& \|(\mathcal{K}_{11}(t)+R_{11}(t)) u_0\|_{L^2}+\|\sum_{i=1}^n\Gamma^h_{1,i+1}(t)\ast v_0^i\|_{L^2}\nonumber\\
&+&\frac{1}{\beta} \int_0^{t} \|\nabla\Gamma^\bb(t-s)\ast(H(\phi,\nabla\phi,u)(s)-H(\tilde{\phi},\nabla\tilde{\phi},\tilde{u})(s)) \|_{L^2}ds\nonumber\\
&+&\int_0^t \|(\mathcal{K}_{1}(t-s)+R_1(t-s)) H(\phi,\nabla\phi,u)(s)\|_{L^2}ds \\
&+&\int_0^t \|\Gamma^h_{1}(t-s)\ast \bar{B}(\phi,\nabla\phi)v(s) \|_{L^2}ds. \nonumber\\
\end{eqnarray}
For the first two terms on the right hand side we have,
\begin{eqnarray*}
&&\|(\mathcal{K}_{11}(t)+R_{11}(t)) u_0\|_{L^2}  \leq e^{-ct}\|u_0\|_{L^2}+t^{-(\frac{n}{4}+\frac{1}{2})}\|u_0\|_{L^1}\\\\
&&\|\sum_{i=1}^n \Gamma^h_{1,i+1}(t)\ast v_0^i\|_{L^2}\leq  e^{-ct}\sum_{i=1}^n\|v_0^i\|_{L^2}+\min\{1,t^{-(\frac{n}{4}+\frac{1}{2})}\}\sum_{i=1}^n\|v_0^i\|_{L^1}.
\end{eqnarray*}
Let us now estimate the first integral as
\begin{align*}
\frac{1}{\beta} \int_0^{t} \|&\nabla \Gamma^\bb(t-s)\ast(H(\phi,\nabla\phi,u)(s)-H(\tilde{\phi},\nabla\tilde{\phi},\tilde{u})(s)) \|_{L^2}ds\\
\leq&\frac{1}{\beta}\int_0^{t}C \min\{1,{(t-s)}^{-(\frac{n}{4}+\frac{1}{2})}\}\|h(\phi,\nabla\phi)(s)g(u)(s)-h(\tilde{\phi},\nabla\tilde{\phi})(s)g(\tilde{u})(s) \|_{L^1}ds\\
\leq&CH_kG_k(M_{D_x^1\phi}^{\frac{n}{4}}(t)+M_{\phi}^{\frac{n}{4}}(t))M_{u-\tilde{u}}^{\delta}\frac{1}{\beta} \int_0^{t}\min\{1,{(t-s)}^{-(\frac{n}{4}+\frac{1}{2})}\}\min\{1,s^{-\frac{n}{4}-\delta)}\}ds\\
+&CH_kG_k M_{\tilde{u}}^{\frac{n}{4}}(t)M_{\phi-\tilde{\phi}}^{\delta} \frac{1}{\beta} \int_0^{t}\min\{1,{(t-s)}^{-(\frac{n}{4}+\frac{1}{2})}\}\min\{1,s^{-\frac{n}{4}-\delta)}\}ds\\
+&CH_kG_k M_{\tilde{u}}^{\frac{n}{4}}(t)M_{D_x^1\phi-D_x^1\tilde{\phi}}^{\delta} \frac{1}{\beta} \int_0^{t}\min\{1,{(t-s)}^{-(\frac{n}{4}+\frac{1}{2})}\}\min\{1,s^{-\frac{n}{4}-\delta)}\}ds,
\end{align*}
where $\delta=\min\{ \frac{n}{4}+\frac{1}{2},\frac{n}{2}\}$.

Then, thanks to Lemma \ref{Lemma5.2} we deduce
\begin{align*}
\frac{1}{\beta} \int_0^{t} \|&\nabla\Gamma^\bb(t-s)\ast(H(\phi,\nabla\phi,u)(s)-H(\tilde{\phi},\nabla\tilde{\phi},\tilde{u})(s)) \|_{L^2}ds\\
\leq& C_1 (\min\{1,t^{-\theta_1}\}(H_kG_k( M_{\phi}^{\frac{n}{4}}(t)+M_{D_x^1 \phi}^{\frac{n}{4}}(t))M_{u-\tilde{u}}^{\delta}+M_{\tilde{u}}^{\frac{n}{4}}(t)M_{\phi-\tilde{\phi}}^{\delta}+M_{\tilde{u}}^{\frac{n}{4}}(t)M_{D_x^1\phi-D_x^1\tilde{\phi}}^{\delta}).
\end{align*}
where $\theta_1=\min\left\{\frac{n}{4}+\frac{1}{2},\frac{n}{4}+\delta, \frac{n}{2}+\delta-\frac{1}{2}\right\}$.

We estimate now the fourth term in \eqref{diff_uL2} as,
\begin{align*}
\int_0^t \|&(\mathcal{K}_{1}(t-s)+R_1(t-s)) H(\phi,\nabla\phi,u)(s)\|_{L^2}ds \\
\leq & \int_0^t Ce^{-c(t-s)}\|h(\phi,\nabla\phi)g(u)\|_{L^2}+C\min\{1,(t-s)^{-(\frac{n}{4}+1)}\}\|h(\phi,\nabla\phi)g(u)(s)\|_{L^1}ds.
\end{align*}

On the other hand the first term can be estimated as 
\begin{eqnarray*}
\int_0^t Ce^{-c(t-s)}\|h(\phi,\nabla\phi)g(u)\|_{L^2}ds&\leq& \int_0^t Ce^{-c(t-s)}H_kG_k(\|\phi(s)\|_{L^2}\|u(s)\|_{L^\infty}+\|\nabla{\phi}(s)\|_{L^2}\|u(s)\|_{L^\infty})ds\\
&\leq& C\min\{1,t^{-\frac{3}{4}n}\}(G_kH_kM_{D_x^1\phi}^{\frac{n}{4}}(t)N_{u}^{\frac{n}{2}}(t)+M_{\phi}^{\frac{n}{4}}(t)N_{u}^{\frac{n}{2}}(t)).
\end{eqnarray*}
While the second term is estimate by
\begin{align*}
\int_0^t C\min\{1,(t-s)^{-(\frac{n}{4}+1)}&\}\|h(\phi,\nabla\phi)(s)g(u)(s)\|_{L^1}ds\\
\leq &\int_0^tC\min\{1,(t-s)^{-(\frac{n}{4}+1)}\}G_kH_k(\|\phi(s)\|_{L^2}+\|\nabla{\phi}(s)\|_{L^2})\|u(s)\|_{L^2})ds\\
\leq & C_2\min\{1,t^{-\theta_2}\}H_kG_k(M_u^{\frac{n}{4}}(t)M_{\phi}^{\frac{n}{4}}(t)+M_u^{\frac{n}{4}}(t)M_{D_x^1\phi}^{\frac{n}{4}}(t)),
\end{align*}
where $\theta_2=\min\{\frac{n}{4}+1, \frac{n}{2}\}$.
In order to complete our estimate, we need to study the fifth integral term in \eqref{diff_uL2}, then proceeding as done before,
\begin{eqnarray*}
\int_0^t \|\Gamma^h_{1}(t-s)\ast \bar{B}(\phi,\nabla\phi)w(s) \|_{L^2}ds
&\leq& B_k\min\{1,t^{-(\frac{n}{4}+\frac{n}{2})}\}(M_\phi^{\frac{n}{4}}(t)N_{v}^{\frac{n}{2}}(t)+M_{D_x^1 \phi}^{\frac{n}{4}}(t)N_{v}^{\frac{n}{2}}(t))\\
&+&B_k\min\{1,t^{-\theta_3}\}B_K(M_{\phi}^{\frac{n}{4}}(t)M_{v}^{\nu_0}(t)+M_{D_x^1 \phi}^{\frac{n}{4}}(t)M_{v}^{\nu_0}(t)),
\end{eqnarray*}
where $\theta_3=\min\{\frac{n}{4}+\frac{1}{2},\frac{n}{2}\}$.

If we sum all the previous estimates, we get the following estimate for the difference of function $u$ and function $\tilde{u}$ in the $L^2$-norm.
\begin{eqnarray*}
\|u(t)-\tilde{u}(t)\|_{L^2}&\leq& C\left[e^{-ct}\|u_0\|_{L^2}+t^{-(\frac{n}{4}+\frac{1}{2})}\|u_0\|_{L^1}+e^{-ct}\sum_{i=1}^n\|v_0^i\|_{L^2}+\min\{1,t^{-(\frac{n}{4}+\frac{1}{2})}\}\sum_{i=1}^n\|v_0^i\|_{L^1}\right]\\
&+&C_k\left[\min\{1,t^{-\theta_1}\}((M_{\phi}^{\frac{n}{4}}(t)+M_{D_x^1 \phi}^{\frac{n}{4}}(t))M_{u-\tilde{u}}^{\delta}(t)+M_{\tilde{u}}^{\frac{n}{4}}(t)(M_{\phi-\tilde{\phi}}^{\delta}(t)+M_{D_x^1\phi-D_x^1\tilde{\phi}}^{\delta}(t))\right.\\
&+&\min\{1,t^{-\frac{3}{4}n}\}(M_{\phi}^{\frac{n}{4}}(t)N_{u}^{\frac{n}{2}}(t)+M_{D_x^1\phi}^{\frac{n}{4}}(t)N_{u}^{\frac{n}{2}}(t))\\
&+&\min\{1,t^{-\theta_2}\}(M_u^{\frac{n}{4}}M_{\phi}^{\frac{n}{4}}+M_u^{\frac{n}{4}}M_{D_x^1\phi}^{\frac{n}{4}} )
+\left.\min\{1,t^{-\theta_3}\}(M_{\phi}^{\frac{n}{4}}M_{v}^{\nu_0}(t)+M_{D_x^1 \phi}^{\frac{n}{4}}M_{v}^{\nu_0}(t))\right.,
\end{eqnarray*}
where $\delta=\min\{\frac{n}{4}+\frac{1}{2},\frac{n}{2}\}$, $\theta_1=\min\{\frac{n}{4}+\frac{1}{2},\frac{n}{4}+\delta,\frac{n}{2}-\frac{1}{2}+\delta\}$, $\theta_2=\min\{\frac{n}{4}+1,\frac{n}{2}\}$, and $\theta_3=\min\{\frac{n}{4}+\frac{1}{2},\frac{n}{2}\}$. \\
Let us now focus on the function $\phi$. Arguing as in Proposition \ref{phi_estimates}, it is easy to show that 
the difference of the second variables is given by 
\begin{eqnarray*}
\|\phi(t)-\tilde{\phi}(t)\|_{L^2}&\leq& \int_0^t \|e^{-b(t-s)}\Gamma^p(t-s)\ast(au(s)-a\tilde{u}(s)+\bar{f}(u,\phi)(s)-\bar{f}(\tilde{u},\tilde{\phi})(s))\|_{L^2}ds\\
&\leq& \int_0^t e^{-b(t-s)}C_k(\|u(s)-\tilde{u}(s)\|_{L^2}+\|\phi(s)-\tilde{\phi}(s)\|_{L^2})ds\\
&\leq& C_k (M_{u-\tilde{u}}^{\delta}(t)+M_{\phi-\tilde{\phi}}^{\delta}(t))\int_0^t e^{-b(t-s)}\min\{1,s^{-\delta}\}ds\\
&\leq& C_k \min\{1,t^{-\delta}\}(M_{u-\tilde{u}}^{\delta}(t)+M_{\phi-\tilde{\phi}}^{\delta}(t)).
\end{eqnarray*}

Then, for small initial data we have
\begin{equation}\label{diff_funz_phiL2_a}
M_{\phi-\tilde{\phi}}^{\delta}(t)\leq C_{k}M_{u-\tilde{u}}^{\delta}(t),
\end{equation}
where $C_k$ is a constant depending on $K$.
Proceeding in a similar way we get also 
\begin{eqnarray}\label{diff_funz_DphiL2_b}
M_{D_x^1\phi-D_x^1\tilde{\phi}}^{\delta}\leq C_{2K} M_{u-\tilde{u}}^{\delta}.
\end{eqnarray}
Then by using the known decays of the $L^2$ and $L^\infty$-norms of $u,\tilde{u}$, $\phi,\tilde{\phi}$, $\nabla\phi, \nabla\tilde{\phi}$, from inequalities in \eqref{diff_funz_phiL2_a} and \eqref{diff_funz_DphiL2_b} we obtain
\begin{eqnarray*}
M_{u-\tilde{u}}^{\delta}(t)&\leq& C_0\left(\|u_0\|_{L^2}+\sum_{i=1}^n\|v_0^i\|_{L^2}+\|u_0\|_{L^1}+\sum_i\|v_0^i\|_{L^1}\right)\\
&+&C_{1k}\left[M_{u-\tilde{u}}^{\delta}(t)(M_{\phi}^{\frac{n}{4}}(t)+M_{D_x^1\phi}^{\frac{n}{4}}(t)+ M_{\tilde{u}}^{\frac{n}{4}}(t))\right]\\
&+&C_{2k}\left[M_{\phi}^{\frac{n}{4}}(t)N_{\tilde{\phi}}^{\frac{n}{2}}(t)+M_{u}^{\frac{n}{4}}(t)N_{u}^{\frac{n}{2}}(t)\right.\\
&+&M_u^{\frac{n}{4}}(t)N_{u}^{\frac{n}{2}}(t)+M_{\phi}^{\frac{n}{4}}(t)N_{u}^{\frac{n}{2}}(t)+M_{D_x^1\phi}^{\frac{n}{4}}(t)N_{u}^{\frac{n}{2}}(t)+M_{\phi}^{\frac{n}{4}}(t)M_{u}^{\frac{n}{4}}(t)\\
&+&\left.M_{D_x^1 \phi}^{\frac{n}{4}}(t)M_{u}^{\frac{n}{4}}(t)+M_{\phi}^{\frac{n}{4}}(t)M_{v}^{\nu_0}(t)+M_{D_x^1\phi}^{\frac{n}{4}}(t)M_{v}^{\nu_0}(t)\right],
\end{eqnarray*}

where $C_{1k}$ and $C_{2k}$ are positive constant depending on $K$.\\
Now for small $M_{\phi}^{\frac{n}{4}}(t)$, $M_{D_x^1\phi}^{\frac{n}{4}}(t)$ and $M_{\tilde{u}}^{\frac{n}{4}}(t)$, or $K$, i.e. for small initial data, we have a global bound for $M_{u-\tilde{u}}^{\delta}(t)$, with $\delta=\min\{\frac{n}{4}+\frac{1}{2}, \frac{n}{2}\}$, and of course for the functional
$M_{\phi-\tilde{\phi}}^{\delta}(t)$.   
\end{proof}

\section*{Acknowledgments} 

The author would like to thank R. Natalini for several helpful discussions, for spending time reading and revising the paper, and thus considerably improving the presentation. This work has been partially supported by ANR project MONUMENTALG (ANR-10-JCJC 0103) and by ANR project KIBORD (ANR-13-BS01-0004).

\bibliographystyle{plain}

\bibliography{bibliografia}	

\end{document}